\crefname{hypothesis}{Hypothesis}{Hypotheses}
\Crefname{ALC@unique}{Line}{Lines}
\colorlet{texcscolor}{blue!50!black}
\colorlet{texemcolor}{red!70!black}
\colorlet{texpreamble}{red!70!black}
\colorlet{codebackground}{black!25!white!25}
\def\endproof{\hfill $\Box$ \vskip 0.4cm}
\tikzstyle fwd=[line width=0.7pt, ->]   
\tikzstyle fwddash=[line width=0.7pt, dashed, ->]   
\tikzstyle bwd=[double, line width=0.3pt, ->]  
\tikzstyle refl=[double,  dashed, line width=0.2pt, ->]    
\tikzstyle{every node}=[font=\small] 
\tikzset{
  >=stealth', 
  invisible/.style={opacity=0}, 
  alt/.code args={<#1>#2#3}{\alt<#1>{\pgfkeysalso{#2}}{\pgfkeysalso{#3}}}, 
  visible on/.style={alt=#1{}{invisible}}, 
  smallnode/.style={circle, fill=black, thick, inner sep=1pt, minimum size=1.5pt}, 
  punkt/.style={
           rectangle,
           rounded corners,
           draw=black, very thick,
           text width=5em,
           minimum height=2em,
           text centered},
}
\tikzset{%
        brace/.style = { decorate, decoration={brace, amplitude=5pt} },
       mbrace/.style = { decorate, decoration={brace, amplitude=5pt, mirror} },
        label/.style = { black, midway, scale=0.5, align=center },
     toplabel/.style = { label, above=.5em, anchor=south },
    leftlabel/.style = { label,rotate=-90,left=.5em,anchor=north },   
  bottomlabel/.style = { label, below=.5em, anchor=north },
        force/.style = { rotate=-90,scale=0.4 },
        round/.style = { rounded corners=2mm },
       legend/.style = { right,scale=0.4 },
        nosep/.style = { inner sep=0pt },
   generation/.style = { anchor=base }
}
\newcommand{\va}{{\mathbf{a}}}
\newcommand{\vb}{{\mathbf{b}}}
\newcommand{\vg}{{\mathbf{g}}}
\newcommand{\vm}{{\mathbf{m}}}
\newcommand{\vu}{{\mathbf{u}}}
\newcommand{\vv}{{\mathbf{v}}}
\newcommand{\vw}{{\mathbf{w}}}
\newcommand{\vx}{{\mathbf{x}}}
\newcommand{\vy}{{\mathbf{y}}}
\newcommand{\vz}{{\mathbf{z}}}
\newcommand{\vI}{{\mathbf{I}}}
\newcommand{\cE}{{\mathcal{E}}}
\newcommand{\EE}{\mathbb{E}} 
\newcommand{\RR}{\mathbb{R}} 
\newcommand{\vzero}{\mathbf{0}} 
\newcommand{\dist}{\mathrm{dist}}    
\newcommand{\Prob}{{\mathrm{Prob}}} 
\newcommand{\prox}{{\mathbf{prox}}} 
\newcommand{\dom}{{\mathrm{dom}}} 
\newcommand{\tr}{{\mathrm{tr}}} 
\newcommand{\Proj}{{\mathrm{Proj}}} 
\DeclareMathOperator*{\argmin}{arg\,min} 
\DeclareMathOperator*{\Min}{minimize}
\newcommand{\bc}{\begin{center}}
\newcommand{\ec}{\end{center}}
\newcommand{\bdm}{\begin{displaymath}}
\newcommand{\edm}{\end{displaymath}}
\newcommand{\beq}{\begin{equation}}
\newcommand{\eeq}{\end{equation}}
\newcommand{\bfl}{\begin{flushleft}}
\newcommand{\efl}{\end{flushleft}}
\newcommand{\bt}{\begin{tabbing}}
\newcommand{\et}{\end{tabbing}}
\newcommand{\beqn}{\begin{eqnarray}}
\newcommand{\eeqn}{\end{eqnarray}}
\newcommand{\beqs}{\begin{align*}} 
\newcommand{\eeqs}{\end{align*}}  
\newtheorem{assumption}{Assumption}
\numberwithin{equation}{section}
\lstdefinestyle{siamlatex}{%
  style=tcblatex,
  texcsstyle=*\color{texcscolor},
  texcsstyle=[2]\color{texemcolor},
  keywordstyle=[2]\color{texemcolor},
  moretexcs={cref,Cref,maketitle,mathcal,text,headers,email,url},
}
\patchcmd\newpage{\vfil}{}{}{}
\title{Distributed stochastic inertial-accelerated methods with delayed derivatives for nonconvex problems}
\author{Yangyang Xu\thanks{Department of Mathematical Sciences, Rensselaer Polytechnic Institute, Troy, NY (\email{xuy21@rpi.edu})}
\and Yibo Xu\footnotemark[1]
\and Yonggui Yan\footnotemark[1]
\and Jie Chen\thanks{MIT-IBM Watson AI Lab, IBM Research}
}
\begin{document}
\maketitle

\begin{tcbverbatimwrite}{tmp_\jobname_abstract.tex}
\begin{abstract}
   
   Stochastic gradient methods (SGMs) are predominant approaches for solving stochastic optimization.  
On smooth nonconvex problems, a few acceleration techniques have been applied to improve the convergence rate of SGMs. However, little exploration has been made on applying a certain acceleration technique to a stochastic subgradient method (SsGM) for nonsmooth nonconvex problems. In addition, few efforts have been made to analyze an (accelerated) SsGM with delayed derivatives. The information delay naturally happens in a distributed system, where computing  workers do not coordinate with each other.  

In this paper, we propose an inertial proximal SsGM for solving nonsmooth nonconvex stochastic optimization problems. The proposed method can have guaranteed convergence even with delayed derivative information in a distributed environment. Convergence rate results are established to three classes of nonconvex problems: weakly-convex nonsmooth problems with a convex regularizer, composite nonconvex problems with a nonsmooth convex regularizer, and smooth nonconvex problems. For each problem class, the convergence rate is $O(1/K^{\frac{1}{2}})$ in the expected value of the gradient norm square, for $K$ iterations.  
In a distributed environment, the convergence rate of the proposed method will be slowed down by the information delay. Nevertheless, the slow-down effect will decay with the number of iterations for the latter two problem classes. 
We test the proposed method on three applications. The numerical results clearly demonstrate the advantages of using the inertial-based acceleration. Furthermore, we observe higher parallelization speed-up in asynchronous updates over the synchronous counterpart, though the former uses delayed derivatives. Our source code is released at \url{https://github.com/RPI-OPT/Inertial-SsGM}
\end{abstract}

\begin{keywords}
  stochastic (sub)gradient method, inertial acceleration, distributed parallelization, delayed (sub)gradient 
\end{keywords}

\begin{AMS}
  90C15, 65Y05, 68W15, 65K05 
\end{AMS}
\end{tcbverbatimwrite}

\input{tmp_\jobname_abstract.tex}

\section{Introduction}

The stochastic approximation method is one popular approach for solving stochastic problems. It can date back to \cite{robbins1951stochastic} for solving root-finding problems. Nowadays, its first-order versions, such as the stochastic gradient method (SGM), have been extensively used to solve stochastic problems or deterministic problems that involve a huge amount of data (e.g., see \cite{nemirovski2009robust, sra2012optimization}). A standard (or vanilla) SGM often converges slowly. Several acceleration techniques have been used to improve its theoretical and/or empirical convergence speed (e.g., \cite{allen2017katyusha, johnson2013accelerating, duchi2011adaptive, xu2020momentum, tran2021hybrid}) for solving convex or smooth nonconvex problems. However, \emph{for nonsmooth nonconvex problems, it appears that it is still unknown whether a proximal SGM or a stochastic subgradient method (SsGM) can still have guaranteed convergence if a certain acceleration technique is applied}. 
In this paper, we give a positive answer to this open question by using an inertial-type acceleration technique, even if the derivative information can be delayed in a distributed environment. 

Our study focuses on stochastic optimization problems in the form of 
\begin{equation}\label{eq:stoc-prob}
\phi^*=\Min_{\vx\in \RR^n} ~\phi(\vx):=F(\vx)+r(\vx), \text{ with } F(\vx):= \EE_\xi [f(\vx;\xi)].
\end{equation}
Here, $\xi$ is a random variable that can represent a stochastic scenario or a data point, $F$ is often called a loss function or a data-fitting term, and $r$ can include a hard constraint and/or a soft regularization term. We will study a few problem classes, where $F$ is nonconvex and can be smooth or nonsmooth but $r$ is convex and nondifferentiable if it exists. 
As a special case, when $\xi$ is distributed on a finite (but possibly very large-scale) dataset, $F$ will reduce to a finite-sum structured function that appears in any application involving a pre-collected dataset. 

Applications in the form of \eqref{eq:stoc-prob} include the robust phase retrieval that has been used in imaging and speech processing \cite{eldar2014phase,duchi2019solving}, the blind deconvolution in astronomy and computer vision \cite{chan1998total,levin2011understanding}, the robust principal component analysis in image deconvolution \cite{chandrasekaran2011rank,candes2011robust}, the online nonnegative matrix factorization in image processing and pattern recognition \cite{guan2012online}, and the sparsity-regularized deep learning \cite{scardapane2017group}. Specific formulations of some applications are given in section~\ref{sec:numerical}.

\subsection{Proposed algorithm}

%
\begin{wrapfigure}{r}{0.35\textwidth}
	\vspace{-0.7cm}
	\begin{center}
	\begin{tikzpicture}[fill=blue!20, scale = 1.0]
		\path (0,0) node (master) [circle, draw, fill=green!60] {master}
		(0, 2) node (agg1) [rounded corners, rectangle, draw, fill=red!25] {worker}
		(-2.2, .50) node (agg2) [rounded corners, rectangle, draw, fill=red!25] {worker}
		(-1.2, -1.6) node (agg3)  [rounded corners, rectangle, draw, fill=red!25] {worker} 
		(1.2, -1.6) node (agg4) [rounded corners, rectangle, draw, fill=red!25] {worker}
		(2.2, .50) node (agg5) [rounded corners, rectangle, draw, fill=red!25] {worker};	
		
		\draw[ ->] (.1,1.75) -- (.1,0.65)
		node[pos = .5, rotate = -0, right]	 {\footnotesize $\vg$};		
		\draw[ ->] (-1.6,.6) -- (-0.6,.1)
		node[pos = .5, rotate = -26, above]	 {\footnotesize $\vg$};		
		\draw[->] (-1.3, -1.35) -- (-.45,-0.5) 
		node[pos = .5, rotate = 45, above]	 {\footnotesize $\vg$};
		\draw[->] (1.1,-1.35) -- (.3,-0.55) 
		node[pos = .5, rotate = -45, below]	 {\footnotesize $\vg$};
		\draw[ ->] (1.6,0.4) -- (0.6,-0.1)
		node[pos = .5, rotate = 26, below]	 {\footnotesize $\vg$};
		
		\draw[ ->] (-.1,0.65) -- (-.1,1.75)
		node[pos = .5, rotate = 0, left]	 {\footnotesize $\vx$};		
		\draw[ ->] (-0.6,-0.1) -- (-1.6,0.4)
		node[pos = .5, rotate = -26, below]	 {\footnotesize $\vx$};		
		\draw[->] (-.3,-0.55) --  (-1.1, -1.35)
		node[pos = .5, rotate = 45, below]	 {\footnotesize $\vx$};	
		\draw[->] (.45,-0.5) --  (1.3,-1.35)
		node[pos = .5, rotate = -45, above]	 {\footnotesize $\vx$};	
		\draw[ ->] (0.6,.1) -- (1.6,.6) 
		node[pos = .5, rotate = 26, above]	 {\footnotesize $\vx$};	
	\end{tikzpicture}	
\end{center}
\vspace{-0.2cm}	
\caption{{\small A master-worker architecture. The \emph{master} performs update to $\vx$; \emph{worker}s compute sample (sub)gradients.}}\label{fig:par-in-sgd}	
\vspace{-0.6cm}
\end{wrapfigure}
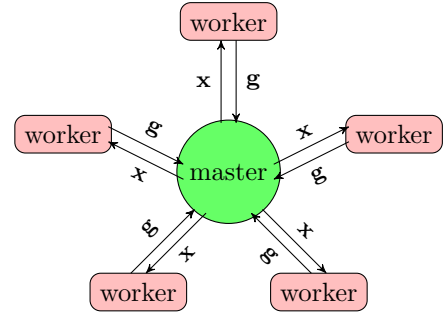

We propose to solve \eqref{eq:stoc-prob} in a distributed environment. Suppose there are multiple agents. One agent is designated as the \emph{master} and all the others as \emph{worker}s. The master performs update to $\vx$ while the workers compute sample (sub)gradients; see Fig.~\ref{fig:par-in-sgd} for an illustration. The master-worker architecture has been adopted in many works. It can naturally happen, either because data are collected from local devices and then sent to a central server for processing such as in a sensor network application \cite{muruganathan2005centralized}, or because the pre-collected dataset is too large to fit on a single machine and must be distributed over multiple machines. 

We assume that each worker can acquire samples of $\xi$ and compute the (sub)gradient of each sampled function $f(\,\cdot\,;\xi)$. Each worker sends its computed sample (sub)gradient $\vg$ to the master, and the latter updates $\vx$ by using its received sample (sub)gradients and then sends the updated $\vx$ to workers.  Our scheme is described in Alg.~\ref{alg:async-hvb-sgm}, which is from the master's point of view.

\begin{algorithm2e}
	\DontPrintSemicolon
	\caption{A distributed stochastic inertial subgradient method for \eqref{eq:stoc-prob}}
	\label{alg:async-hvb-sgm}
	\textbf{Initialization:} choose $\vx^{(0)}\in\dom(r)$ and set $\vx^{(1)}=\vx^{(0)}$\;
	\For{$k=1,2,\ldots$}{
		Let $\vg^{(k)} = \tilde\nabla f(\vx^{(k-\tau_k)}; \xi_k)$ computed by a worker, where $\xi_k$ is a sample of $\xi$ and $\tau_k$ measures the possible delay;\;
		Choose stepsize $\alpha_k>0$ and inertial parameter $\beta_k\ge0$;\;
		Update the variable $\vx$ by
		\begin{equation}\label{eq:update-x-2}
			\vx^{(k+1)}= \prox_{\alpha_{k} r}\left( \vx^{(k)} - \alpha_{k} \vg^{(k)} +\beta_k (\vx^{(k)}-\vx^{(k-1)})\right).
		\end{equation}
	}
\end{algorithm2e}

Here, $\tilde\nabla h(\vx)$ denotes a subgradient of a function $h$ at $\vx$, and it reduces to gradient if $h$ is differentiable at $\vx$.
In \eqref{eq:update-x-2}, the proximal mapping is defined as
\begin{equation}\label{eq:prox-def}
	\prox_{\alpha r} (\vx)= \argmin_{\vy\in \RR^n}\left\{ \textstyle r(\vy)+ \frac{1}{2\alpha} \|\vy-\vx\|^2 \right\}.
\end{equation}
We use $k$ to count the number of updates performed by the master. Notice that the master will update $\vx$ once it receives a sample (sub)gradient from one worker, and we do not enforce coordination between the workers. Hence, the $\vg^{(k)}$ used in \eqref{eq:update-x-2} may not be a sample (sub)gradient computed at $\vx^{(k)}$ but at an outdated iterate $\vx^{(k-\tau_k)}$. This setup with delayed information is the same as that in  \cite{agarwal2011distributed}. Also, instead of using a single sample, we can take multiple samples to compute $\vg^{(k)}$ as the average of the multiple sample (sub)gradients.

Consider a special case, where $f(\,\cdot\,;\xi)$ is differentiable for each $\xi$ and $r(\cdot)\equiv 0$. Then the update in \eqref{eq:update-x-2} becomes
$\vx^{(k+1)}=   \vx^{(k)} - \alpha_{k} \vg^{(k)} +\beta_k (\vx^{(k)}-\vx^{(k-1)})$. Let $\beta_k=\frac{\alpha_{k}}{\alpha_{k-1}}\beta$ for all $k\ge 1$ and for some $\beta\in(0,1)$. Define a recursive sequence by
\begin{equation}\label{eq:update-m} 
	\vm^{(k)}=\beta \vm^{(k-1)} + (1-\beta)\vg^{(k)}, \forall\, k\ge1, \text{ with }\vm^{(0)}=\vzero.	
\end{equation}
Then the $\vx$-update can be rewritten to 
\begin{equation}\label{eq:update-x}
\textstyle 	\vx^{(k+1)}=  \vx^{(k)} - \frac{\alpha_{k}}{1-\beta} \vm^{(k)},
\end{equation}
which is often referred as a momentum SGM in the literature (e.g., \cite{yan2016unified, gitman2019understanding})

\vspace{0.1cm}

\noindent\textbf{Why use inertial force or momentum?}~~Different from a standard proximal SsGM, we introduce an inertial force (or heavy-ball momentum term) $\beta_k (\vx^{(k)}-\vx^{(k-1)})$ in the update \eqref{eq:update-x-2}. If $\beta_k=0$, the update reduces to the standard proximal SsGM step. The heavy-ball momentum acceleration technique was first used in \cite{polyak1964some}. With the inertial force, a heavy-ball gradient method can mitigate the zigzagging behavior of a standard gradient descent method and potentially achieve faster convergence. For unconstrained strongly-convex quadratic optimization, it has been shown (cf. \cite{recht2010cs726}) that the heavy-ball gradient method can achieve an optimal convergence rate. The advantage of using inertia has also been studied for deterministic composite nonconvex problems and stochastic smooth nonconvex problems. For example, the work \cite{gitman2019understanding}  studies a more general momentum-based method, called Quasi-Hyperbolic Momentum (QHM), which includes the heavy-ball momentum as a special case. 
For unconstrained smooth problems,	 \cite{gitman2019understanding} gives a local linear convergence result that suggests the advantage of adding a heavy-ball momentum term in the update of a standard SGM. In addition, it provides supporting experiments to demonstrate that the optimal inertial parameter has a positive correlation with the condition number of the underlying problem. 
Although a heavy-ball momentum SGM has been extensively used in practice, 
a theoretical convergence guarantee is not yet achieved in the literature for nonconvex nonsmooth stochastic problems. We will provide a novel guideline of parameter setting for the inertial SGM or SsGM along with convergence guarantee, even if each $\vg^{(k)}$ is computed at an outdated iterate. It is worth mentioning that for unconstrained smooth problems, a heavy-ball momentum SGM and Nesterov's Accelerated Gradient (NAG)  are different special cases of QHM \cite{gitman2019understanding}. Though beyond the scope of this paper, our work may shed light on the acceleration effect of general momentum-based methods for nonsmooth nonconvex problems, such as QHM and NAG.

\subsection{Related works}


Our method has a few key ingredients, including ``stochastic subgradient'', ``inertia'', ``nonsmooth nonconvex'', and ``distributed delayed'', which differentiate our method from existing ones. Below we review prior methods that share some ingredients with ours. We list a few closely-related methods with corresponding ingredients in Table~\ref{table:related}. 

\begin{table}[h]
	\caption{ {\small A comparison of ingredients amongst several algorithms for solving problems in the form of \eqref{eq:stoc-prob}. In the second column, ``property of $F$'' is to reflect the underlying assumption of $F$:  ``w.c.'' for weak convexity, ``smooth'' for Lipschitz continuous gradient, and ``cvx'' for convexity.
	In the third column, ``inertia'' is to reflect whether the algorithm introduces inertia. In the fourth column, ``composite model'' is to reflect the existence of $r$ in \eqref{eq:stoc-prob}: ``proj.'' indicates a simple convex constraint, and ``prox.'' indicates a proximable regularizer.  
		In the fifth column, ``distributed delayed'' is to reflect whether the algorithm can handle a distributed setting with delayed (sub)gradient information. In the last column, convergence rate results for nonconvex models are listed: $\tau$ for the upper bound on the delay and $K$ for the total number of iterations. 
		} 
	} 
	
	\label{table:related}
	\begin{center}
		\resizebox{.98\textwidth}{!}{
			\begin{tabular}{|c|c|c|c|c|c|}
				\hline
				Method                 & property of $F$ & inertia & composite model & distributed delayed &  convergence rate  \\ \hline\hline
				Mirror Descent \cite{agarwal2011distributed}               & smooth \& cvx       & no                 &  no               &     yes    &  ---  \\
				AdaptiveRevision \cite{mcmahan2014delay} & smooth \& cvx & no & no & yes & --- \\
				Random Incremental Subgrad. \cite{nedic2001distributed} & cvx        & no                  &      proj.          &    yes    &  ---  \\
				AdaDelay \cite{sra2016adadelay} & smooth \& cvx & no & proj. & yes & --- \\
				AsySG-con \cite{lian2015asynchronous}       & smooth       & no                  &       no     & yes   & $(1+\tau /\sqrt{K})/\sqrt{K}$  \\\hline
				\multirow{ 2}{*}{APAM \cite{xu2020asynchronous} }      & smooth \& cvx      & yes                  &       proj.     & yes   & ---  \\
				       & smooth       & yes                  &       no     & yes   & $(1+\tau /K^{1/4}+\tau^2 /\sqrt{K})/\sqrt{K}$  \\\hline
				SHB \cite{mai2020convergence-ICML}       & w.c.        & yes                  &       proj.        &    no    & $1/\sqrt{K}$  \\\hline
				\multirow{ 3}{*}{This paper} & w.c.       & yes                 &       proj. \& prox.            &    yes   & $(1+\tau /\sqrt{K}+\tau)/\sqrt{K}$  \\
			 & smooth       & yes                 &       proj. \& prox.            &    yes   & $(1+\tau^2 /\sqrt{K})/\sqrt{K}$  \\
			 & smooth       & yes                 &       no           &    yes   & $(1+\tau /\sqrt{K})/\sqrt{K}$  \\[0.1cm]\hline       
			\end{tabular}
		}
	\end{center}
\end{table}



\vspace{0.1cm}

\noindent\textbf{Heavy-ball and inertial methods.}~~Early advances based on the heavy-ball or inertial momentum acceleration technique 
	can date back to \cite{polyak1964some, nesterov1983method}. 
For decades, researchers have been 
designing heavy-ball or inertial methods  for deterministic optimization \cite{zavriev1993heavy,ochs2014ipiano,ochs2015ipiasco,ghadimi2015global,liang2016multi,ochs2018local},  structured stochastic optimization \cite{loizou2017linearly,loizou2020momentum, polyak1987introduction,tseng1998incremental,sutskever2013importance,ghadimi2016accelerated}, and even in the framework of maximal monotone operators \cite{alvarez2001inertial,alvarez2004weak,moudafi2003convergence}.
Convergence analysis has been conducted to convex problems and also nonconvex problems. For a convex deterministic model, \cite{sun2019non, sun2020nonergodic} provide last-iterate convergence for inertial methods. 
For a convex stochastic model, \cite{nazin2018algorithms} proposes an inertial mirror descent method and establishes an $O(1/\sqrt{K})$ 
convergence rate result. 
%
Under a bounded-gradient assumption, \cite{yan2016unified} provides a unified convergence analysis of stochastic momentum methods for unconstrained smooth nonconvex stochastic optimization.  \cite{ghadimi2016accelerated} incorporates momentum acceleration in SGM and achieves an optimal oracle complexity result for \eqref{eq:stoc-prob} when $F$ is smooth.
The work \cite{sun2019heavy} studies how heavy-ball technique can help SGM escape saddle points. 


\vspace{0.1cm}

\noindent\textbf{Distributed/parallel stochastic methods with delayed (sub)gradient information.}~~There have been quite a few works about distributed delayed or asynchronous (async) parallel SGMs for convex or nonconvex problems and SsGMs for convex problems.
Similar to our method, \cite{agarwal2011distributed} also adopts a master-worker setup. It 
analyzes a distributed delayed SGM for convex problems and establishes a convergence rate of $O(\frac{1+\tau^2/\sqrt{K}}{\sqrt{K}})$, where $\tau$ denotes the maximum delay of stochastic gradient and $K$ is the total number of updates. 
Under a shared-memory setting, \cite{recht2011hogwild} proposes an async-parallel SGM 
 for strongly-convex problems with a special sparsity structure and establishes a convergence rate of $O(\frac{1+\tau^2/\sqrt{n}}{K}\ln K)$, where $n$ is the number of coordinates. 
\cite{mcmahan2014delay} gives delay-tolerant algorithms for async distributed convex online learning problems. Its algorithms can achieve 
a regret of $O(\sqrt{ (1+\tau) K })$  if a uniform upper bound $\tau$ on the delay is known and $O((1+\tau) \sqrt{ K })$ otherwise. 
For smooth convex stochastic problems, \cite{backstrom2019mindthestep, sra2016adadelay} adapt the stepsize of an async-parallel SGM to the staleness of stochastic gradient. 
More precisely, let  $\tau_k$ denote the actual delay at iteration $k$. 
The stepsize of the methods in \cite{sra2016adadelay,backstrom2019mindthestep} depends on 
$\tau_k$.
\cite{sra2016adadelay} analyzes its projected {stochastic} gradient scheme under the assumption that the delay has 
a bounded expectation $\EE[\tau_k]=\bar\tau < \infty$ and a bounded second moment $\EE[\tau_k^2]=\Omega(\bar\tau^2)$. The convergence rate is $O(\frac{\sqrt{1+\bar\tau} +\bar\tau^4/\sqrt{K}}{\sqrt{K}})$ if $\bar\tau$ is known and $O(\frac{1+\bar\tau +\bar\tau^4/\sqrt{K}}{\sqrt{K}})$ otherwise. 
{Under the assumption $\EE[\tau_k]=\bar\tau$,} \cite{backstrom2019mindthestep} achieves a rate of $O(\frac{1+\bar\tau^2/K}{K}\ln K)$ for {unconstrained} strongly convex problems. 

 Async-parallel SGMs have also been studied for smooth non-convex problems.
For example, \cite{lian2015asynchronous} analyzes an async-parallel SGM for unconstrained stochastic problems 
 and obtains a convergence rate of $O(\frac{1+\tau /\sqrt{K}}{\sqrt{K}})$ in terms of the expected value of gradient norm square;
\cite{huo2017asynchronous} analyzes an async-parallel variance-reduced SGM for a finite-sum structured problem and shows a sublinear convergence when $\tau=O(1)$;  \cite{xu2020asynchronous} focuses on async distributed and parallel adaptive (i.e., quasi-Newton-type) SGM for unconstrained stochastic problems and gives a convergence rate of $O(\frac{1+\tau /K^{1/4}+\tau^2/\sqrt{K}}{\sqrt{K}})$.
The studies on delayed SsGMs are still limited and only for convex problems. For example, \cite{nedic2001distributed} proposes an async projected SsGM and shows an almost-sure subsequence convergence result but with no convergence rate result.

The distributed/parallel methods mentioned above either adopt a master-worker setup (i.e., centralized) or assume a shared-memory setting. Many other works about SGMs or SsGMs are built on a decentralized setting, where multiple agents are distributed on a connected network and can only communicate with their neighbors but not a central master agent. Extending our discussions to the decentralized setting is beyond the scope of this paper. The interested readers can refer to \cite{wang2014cooperative, doan2018convergence, masubuchi2014distributed, lian2018asynchronous} and the references therein.

\vspace{0.1cm}

\noindent\textbf{Most closely-related works.}~~The methods in \cite{chen2020distributed, mai2020convergence-ICML} are perhaps the most closely related to ours. \cite{chen2020distributed} gives a decentralized projected deterministic subgradient method for weakly-convex optimization. It establishes a sublinear convergence result for the deterministic method. A stochastic variant is also given in \cite{chen2020distributed} with  subsequence convergence but no convergence rate.
In comparison to \cite{chen2020distributed}, we incorporate the inertial-force acceleration in a proximal SsGM to achieve empirically faster convergence, and in addition, we allow for delayed subgradient and can still achieve sublinear convergence. 
\cite{mai2020convergence-ICML} proposes a projected inertial SsGM for weakly-convex stochastic optimization. The method appears similar to Alg.~\ref{alg:async-hvb-sgm}. 
However, its analysis is completely different from ours, and it does not consider the delayed case. 
More importantly, its theoretical result is not established on the inertial-generated sequence. This is explained as follows. 
The update of the method in \cite{mai2020convergence-ICML} is 
\begin{equation}\label{eq:mai2020}
\vx^{(k+1)}= \Proj_{X}\left( \vx^{(k)} - \alpha\beta\vg^{(k)} +(1-\beta) (\vx^{(k)}-\vx^{(k-1)})\right),
\end{equation} 
where $\Proj_{X}$ denotes the projection onto a closed convex set $X$. Its analysis is only on the choice of 
$\alpha\beta=\Theta(\frac{1}{K})$ for a given maximum number $K$ of updates and 
$1-\beta=1-\frac{1}{\sqrt{K}}$. 
The sequence generated from \eqref{eq:mai2020} is similar to that we generate from \eqref{eq:update-x-2}, i.e., inertial-generated sequence. However, the theoretical result in \cite{mai2020convergence-ICML} is not about $\{\vx^{(k)}\}$ but on the extrapolated sequence 
$\{\bar{\vx}^{(k)}:=\vx^{(k)}+\frac{1-\beta}{\beta}(\vx^{(k)}-\vx^{(k-1)})\}$. There are two potential issues on analyzing the property of $\{\bar\vx^{(k)}\}$. First, if $X\neq \RR^n$, the sequence may not be in $X$. In fact, $\bar\vx^{(k)}$ can be far away from $X$ if $\vx^{(k)}-\vx^{(k-1)}\neq\vzero$ as $\frac{1-\beta}{\beta}=\sqrt{K}-1$ is big. 
Second, 
if $X=\RR^n$, it holds 
	$\bar{\vx}^{(k+1)}=  \bar{\vx}^{(k)} - \alpha\vg^{(k)}$, and in this case, $\{\bar{\vx}^{(k)}\}$ is more like a non-inertial sequence, as compared to the sequence generated by the momentum SGM in \eqref{eq:update-x}. In contrast, our analysis will be on the inertial-generated sequence. 

\subsection{Contributions}
%

\begin{itemize}
\item We propose 
a proximal inertial stochastic subgradient method in Alg.~\ref{alg:async-hvb-sgm} for solving non-convex stochastic Problem~\eqref{eq:stoc-prob}. The method can tolerate a delay of derivative information in a distributed environment. To the best of our knowledge, it is the first method that applies the inertial-acceleration technique in a proximal stochastic subgradient method for non-convex problems.  
	
\item We provide convergence rate analysis of the proposed method for three problem classes in the form of \eqref{eq:stoc-prob}. For each problem class, the method, with an appropriate setting of parameters, enjoys an $ O(\frac{1}{\sqrt{K}})$ convergence rate in terms of the expected value of a gradient norm square, where $K$ is the number of total iterations.
First, when $F$ is weakly-convex (see Def.~\ref{def:weak-cvx} below) and possibly nondifferentiable and $r$ is convex, 
we establish the $ O(\frac{1}{\sqrt{K}})$ convergence rate by choosing   
$\alpha_k= \Theta(\frac{1}{\sqrt{K}})$ and $\beta_k=\Theta(\frac{1}{K^{1/4}}), \forall\, k\le K$, provided that the delay $\tau_k$ follows a static distribution and is bounded by $\tau=O(1)$. 
Second, when $F$ is smooth but possibly non-convex and $r$ is convex, we obtain the $ O(\frac{1}{\sqrt{K}})$ convergence rate by the same choice of $\alpha_k$ and $\beta_k$ as in the first case, under a relaxed condition on $\tau_k$, i.e., $\tau_k=O(K^{1/4})$ for all $k$. 
Third, for the case of a smooth $F$ and $r\equiv 0$, we obtain the $O(\frac{1}{\sqrt{K}})$ convergence rate with the choice of $\alpha_k=\Theta(\frac{1}{\sqrt{K}})$ and $\beta_k=\beta\in (0,1),\forall\, k\le K$, provided that $\tau_k=O(\sqrt{K})$ for all $k$. Hence, the proposed method can tolerate a larger delay if the problem has a nicer structure.

\item We conduct numerical experiments of the proposed method on three applications to demonstrate the effect of the inertial acceleration and also to demonstrate the higher parallelization speed-up by the asynchronous implementation over a synchronous counterpart. 
\end{itemize}

\subsection{Notation and organization}

We use lower-case bold letters $\vx,\vy,\ldots$ for vectors. 
 A superscript $^{(k)}$ is used to specify the iterate, i.e., $\vx^{(k)}$ denotes the $k$-th iterate. 
We use $\|\cdot\|$ to denote the Euclidean norm of a vector and also the spectral norm of a matrix. 
We use the big-$O$ notation with the standard meaning to compare two quantities that can both approach to infinity or zero. 
The randomness of Alg.~\ref{alg:async-hvb-sgm} comes from the samples $\{\xi_k\}_{k\ge 1} $. In our analysis, we use $\EE_{k}$ for the conditional expectation with the history until the $k$-th iteration, i.e., $\EE_{k}[\,\cdot\,]=\EE\left[\,\cdot\,|\,\{\xi_j\}_{j=1}^{k-1}\right]$.

The rest of the paper is organized as follows. In section~\ref{sec:prelim}, we give some basic concepts and preliminary results. The detailed analysis and convergence rate results are shown in section~\ref{sec:prox-sub}-\ref{sec:smooth} for three different problem classes. Numerical results are given in section~\ref{sec:numerical}. Finally, section~\ref{sec:conclusion} concludes the paper.

\section{ Preliminaries}\label{sec:prelim}
In this section, we give some basic concepts and preliminary results that will be used in our analysis. 
For a function $\phi:\RR^n\rightarrow\RR\cup\{\infty\}$, we let $\partial\phi(\vx)$ denote its subdifferential at $\vx$, i.e., the set of subgradients, which consists of all vectors $\vv$ satisfying
\[\phi(\vy)\ge\phi(\vx)+\left\langle \vv,\vy-\vx\right\rangle + o\left(  \left\| \vy-\vx \right\|\right) \quad \textup{as } \vy\rightarrow\vx .\] 
The definition and results below can be found in \cite{drusvyatskiy2019efficiency, davis2019stochastic}.

\begin{definition}\label{def:weak-cvx}
	A function $\phi$ is $\rho$-weakly convex if $\phi(\cdot) + \frac{\rho}{2}\|\cdot\|^2$ is convex for some $\rho > 0$. 
\end{definition}

\begin{lemma}
	If $\phi$ is $\rho$-weakly convex, then 
\begin{equation}
		\label{def:weakly}
		\phi(\vy)\ge\phi(\vx)+\left\langle \vv,\vy-\vx\right\rangle -{\textstyle \frac{\rho}{2}} \left\| \vy-\vx \right\|  ^2, \forall\, \vx,\,\vy\in\dom(\phi), \, \forall\, \vv\in\partial\phi(\vx),
	\end{equation}
and		
	\begin{equation}
		\label{def:monotone}
		\left\langle \vv-\vw,\vx-\vy\right\rangle\ge - \rho  \left\| \vy-\vx \right\|  ^2, \forall\, \vx,\,\vy\in\dom(\phi), \, \forall\, \vv\in\partial\phi(\vx), \vw\in\partial\phi(\vy).
	\end{equation}
\end{lemma}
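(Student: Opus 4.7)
The plan is to prove both inequalities by reducing to the standard subgradient inequality and monotonicity of the subdifferential of a convex function, exploiting the defining property that $g := \phi + \frac{\rho}{2}\|\cdot\|^2$ is convex.

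First I would establish the correspondence between $\partial\phi(\vx)$ and $\partial g(\vx)$. Since the squared-norm $\frac{\rho}{2}\|\cdot\|^2$ is differentiable with gradient $\rho\vx$, the subdifferential calculus gives $\partial g(\vx) = \partial\phi(\vx) + \rho\vx$. Hence $\vv\in\partial\phi(\vx)$ if and only if $\vv+\rho\vx\in\partial g(\vx)$. This is the key structural fact and the rest is essentially an algebraic rearrangement.

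For the first inequality \eqref{def:weakly}, I would apply the usual convex subgradient inequality to $g$ at $\vx$ with subgradient $\vv+\rho\vx$, yielding
\begin{equation*}
\phi(\vy)+\tfrac{\rho}{2}\|\vy\|^2 \;\ge\; \phi(\vx)+\tfrac{\rho}{2}\|\vx\|^2+\langle \vv+\rho\vx,\,\vy-\vx\rangle.
\end{equation*}
Collecting the quadratic terms and using the identity $\|\vx\|^2-\|\vy\|^2+2\langle\vx,\vy-\vx\rangle = -\|\vy-\vx\|^2$ produces exactly \eqref{def:weakly}.

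For the second inequality \eqref{def:monotone}, I would invoke monotonicity of $\partial g$ (which holds for any convex $g$): for $\vv+\rho\vx\in\partial g(\vx)$ and $\vw+\rho\vy\in\partial g(\vy)$,
\begin{equation*}
\langle (\vv+\rho\vx)-(\vw+\rho\vy),\,\vx-\vy\rangle \;\ge\; 0,
\end{equation*}
which rearranges to $\langle \vv-\vw,\vx-\vy\rangle \ge -\rho\|\vx-\vy\|^2$. Alternatively, one can simply add the first inequality applied to $(\vx,\vy,\vv)$ and to $(\vy,\vx,\vw)$ to recover the same bound.

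There is no real obstacle here: the only care needed is the subdifferential calculus step, since subdifferentials in the sense used by the excerpt (the Fréchet/limiting subgradient appearing in Rockafellar--Wets and in Drusvyatskiy--Paquette) satisfy the sum rule cleanly when one summand is $C^1$. Because $\frac{\rho}{2}\|\cdot\|^2$ is smooth, the equality $\partial g(\vx)=\partial\phi(\vx)+\rho\vx$ holds unconditionally, so both inequalities follow immediately. I would cite the relevant proposition in \cite{drusvyatskiy2019efficiency} for this step rather than reprove it.
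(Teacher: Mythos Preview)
Your proposal is correct and is the standard argument. The paper itself does not supply a proof of this lemma; it simply states the result and refers the reader to \cite{drusvyatskiy2019efficiency, davis2019stochastic} in the sentence preceding Definition~\ref{def:weak-cvx}. Your derivation---pass to the convex function $g=\phi+\tfrac{\rho}{2}\|\cdot\|^2$, use the smooth sum rule $\partial g(\vx)=\partial\phi(\vx)+\rho\vx$, and then invoke the convex subgradient inequality and monotonicity of $\partial g$---is exactly the proof one finds in those references, so there is nothing to compare.
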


The class of weakly-convex functions is rather big. It includes all convex functions and all smooth functions. In addition, the composition function $h(c(\vx)) $ is also weakly-convex, if $h:\RR^m\rightarrow \RR$ is convex and Lipschitz continuous and $c:\RR^n\rightarrow\RR^m$ is smooth. 
Specific applications that have weakly-convex objectives include nonlinear least squares, phase retrieval, robust PCA, robust low rank matrix recovery, optimization of the Conditional Value-at-Risk, and graph synchronization. 
More examples can be found in \cite{drusvyatskiy2019efficiency}.


A key tool used in recent works (e.g., \cite{davis2019stochastic, alacaoglu2020convergence, nazari2020adaptive, mai2020convergence-ICML, chen2020distributed}) about stochastic weakly-convex minimization 
is the Moreau envelope \cite{MR201952}, which is defined as follows. 

\begin{definition}
For a $\rho$-weakly convex function $\phi$ and $\lambda \in(0, 1/\rho) $, the Moreau envelope $\phi_\lambda(\cdot)$ is defined as 
	\begin{equation}	\phi_\lambda(\vx)=\min_{\vy}\left\{\textstyle \phi(\vy)+\frac{1}{2\lambda}\left\| \vy-\vx\right\| ^2\right\}.\label{eq:moreau}
	\end{equation}

\end{definition}


The Moreau envelope is useful to characterize near-stationarity of a point $\vx$ because of the results in the following lemma. From \eqref{eq:rel-x-tildex}, we notice that if $\| \nabla\phi_\lambda(\vx)\|$ is small, then $\widetilde\vx:=\prox_{\lambda\phi}(\vx)$ will be a near-stationary point of $\phi$ and $\vx$ is close to $\widetilde\vx$.
\begin{lemma}\label{lem:prox-grad}
	Let $\phi$ be $\rho$-weakly convex, then for any $\lambda\in(0,1/\rho)$, the Moreau envelope $\phi_\lambda$ is smooth with gradient given by \[\nabla\phi_\lambda(\vx)=\lambda^{-1}\big(\vx-\widetilde\vx\big),\] 
where $\widetilde{\vx}:=\prox_{\lambda\phi}(\vx)$. Moreover,  
\begin{equation}\label{eq:rel-x-tildex}
\left\| \vx-\widetilde{\vx}\right\| =\lambda\left\| \nabla\phi_\lambda(\vx)\right\|,\ \phi(\widetilde{\vx})\le\phi(\vx),\ \text{ and }\ \dist(\vzero,\partial\phi(\widetilde{\vx}))\le\left\| \nabla\phi_\lambda(\vx)\right\|. 
\end{equation}
\end{lemma}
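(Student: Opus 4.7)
The plan is to establish all four claims by exploiting the strong convexity of the subproblem defining the Moreau envelope. Since $\phi$ is $\rho$-weakly convex and $\lambda \in (0, 1/\rho)$, the map $\vy \mapsto \phi(\vy) + \frac{1}{2\lambda}\|\vy - \vx\|^2$ is $(\frac{1}{\lambda} - \rho)$-strongly convex, so the minimizer $\widetilde\vx = \prox_{\lambda\phi}(\vx)$ in \eqref{eq:moreau} is unique and well-defined for every $\vx$.

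First I would record the first-order optimality condition for the minimization defining $\widetilde\vx$, namely $\vzero \in \partial\phi(\widetilde\vx) + \frac{1}{\lambda}(\widetilde\vx - \vx)$, which is equivalent to $\frac{1}{\lambda}(\vx - \widetilde\vx) \in \partial\phi(\widetilde\vx)$. This will yield the last inequality in \eqref{eq:rel-x-tildex} as soon as the gradient formula is in hand. Moreover, using $\vy = \vx$ as a competitor in \eqref{eq:moreau} gives $\phi(\widetilde\vx) + \frac{1}{2\lambda}\|\widetilde\vx - \vx\|^2 \le \phi(\vx)$, which immediately yields $\phi(\widetilde\vx) \le \phi(\vx)$.

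Next, to derive the gradient formula, for arbitrary $\vy$ I would use $\widetilde\vx$ as a feasible candidate in the minimization defining $\phi_\lambda(\vy)$, giving
\begin{equation*}
\phi_\lambda(\vy) - \phi_\lambda(\vx) \le \frac{1}{2\lambda}\bigl(\|\widetilde\vx - \vy\|^2 - \|\widetilde\vx - \vx\|^2\bigr) = \tfrac{1}{\lambda}\langle \vx - \widetilde\vx, \vy - \vx\rangle + \tfrac{1}{2\lambda}\|\vy - \vx\|^2.
\end{equation*}
Swapping the roles of $\vx$ and $\vy$ produces a matching lower bound, up to an error term involving $\langle \widetilde\vy - \widetilde\vx, \vy - \vx\rangle$ with $\widetilde\vy := \prox_{\lambda\phi}(\vy)$. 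To close the sandwich, I would invoke a Lipschitz bound on the prox operator of the form $\|\widetilde\vy - \widetilde\vx\| \le \frac{1}{1-\lambda\rho}\|\vy - \vx\|$, which sends the error term to $o(\|\vy - \vx\|)$. Together these sandwich bounds show that $\phi_\lambda$ is Fréchet differentiable with $\nabla\phi_\lambda(\vx) = \frac{1}{\lambda}(\vx - \widetilde\vx)$. The identity $\|\vx - \widetilde\vx\| = \lambda\|\nabla\phi_\lambda(\vx)\|$ is then immediate, and combined with the optimality condition above it yields $\dist(\vzero, \partial\phi(\widetilde\vx)) \le \|\nabla\phi_\lambda(\vx)\|$.

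The main obstacle is the Lipschitz estimate on the prox operator in the weakly convex setting. To prove it, I would subtract the optimality conditions at $\vx$ and $\vy$, take an inner product with $\widetilde\vx - \widetilde\vy$, and apply the monotonicity relation \eqref{def:monotone} for $\partial\phi$. The resulting $-\rho\|\widetilde\vx - \widetilde\vy\|^2$ term is absorbed into the quadratic $\frac{1}{\lambda}\|\widetilde\vx - \widetilde\vy\|^2$ coming from the proximal term, leaving a net coefficient $\frac{1}{\lambda} - \rho > 0$ on $\|\widetilde\vx - \widetilde\vy\|^2$ that can be compared to the right-hand side via Cauchy--Schwarz. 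This is the only step where the restriction $\lambda < 1/\rho$ is essential; everything else is bookkeeping with the optimality conditions.
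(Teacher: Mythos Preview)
The paper does not supply its own proof of this lemma; it is quoted as a known preliminary result with a citation to \cite{drusvyatskiy2019efficiency, davis2019stochastic}. Your proposal is a correct, self-contained derivation along standard lines: the optimality condition for the strongly convex subproblem yields the subgradient inclusion and the inequality $\phi(\widetilde\vx)\le\phi(\vx)$, the sandwich argument together with the prox Lipschitz bound gives differentiability and the gradient formula, and the Lipschitz bound itself follows from the hypomonotonicity relation \eqref{def:monotone}. One small addition you could make explicit is that the smoothness claim (Lipschitz continuity of $\nabla\phi_\lambda$, not just differentiability) follows immediately from combining the gradient formula with the prox Lipschitz estimate you already proved.
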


Besides the class of weakly-convex functions, we will also consider smooth functions in our analysis, for which we are able to obtain stronger theoretical results. By slightly abusing the notation, we also use $\rho$ to denote the Lipschitz constant of a smooth function, as a $\rho$-smooth function must be $\rho$-weakly convex.
\begin{definition}
	A function $\phi$ is $\rho$-smooth, if it is differentiable, and 
  \[\|\nabla \phi(\vx)-\nabla \phi(\vy)\|\leq \rho \|\vx-\vy\|, \forall\, \vx,\vy\in \RR^n.\]
\end{definition}
If $\phi$ is $\rho$-smooth, then   
\begin{equation}\label{eq:ineq-smooth}
	|\phi(\vx)-\phi(\vy)-\langle\nabla \phi(\vy),\vx-\vy \rangle|\leq{\textstyle\frac{\rho}{2}}\|\vx-\vy\|^2,\, \forall\, \vx,\vy\in \RR^n.
\end{equation}

\section{Convergence analysis for nonsmooth weakly-convex problems}\label{sec:prox-sub}
In this section, we analyze Alg.~\ref{alg:async-hvb-sgm} for problems in the form of \eqref{eq:stoc-prob}, where  
$F$ is possibly nondifferentiable. Throughout this section, we make the following assumptions. 

\begin{assumption}[weak convexity]\label{assump:weak-cvx}
	$F$ is $\rho$-weakly convex with $\rho>0$.
\end{assumption}

\begin{assumption}[unbiased subgradient]\label{assump:unbiased}
	$\vg^{(k)}$ is an unbiased stochastic subgradient of $F$ at $\vx^{(k-\tau_k)}$ for each $k$, i.e., $\EE_{\xi_k}[\vg^{(k)}]\in\partial F(\vx^{(k-\tau_k)})$.
\end{assumption}


\begin{assumption}[bounded subgradient]\label{assump:subgrad-bound}
	There is a real number $M \ge 0$ such that  $\EE_\xi \| \tilde\nabla f(\vx;\xi)\|^2\le M ^2$ for all $\vx\in\dom(r)$ and all subgradient $\tilde\nabla f(\vx;\xi)\in\partial f(\vx;\xi)$. 
\end{assumption}

\subsection{Preparatory lemmas}

For a fixed $\overline{\rho}>\rho$, we denote 
\begin{subequations}\label{eq:def-xvv}
	\begin{align}
		\vv^{(k)}=\EE_{\xi_k}\big[\vg^{(k)}\big] \in\partial F(\vx^{(k-\tau_k)}),\quad \widetilde\vx^{(k)}=\prox_{\phi/\overline{\rho}}(\vx^{(k)}),\label{eq:def-xtilde}\\ 
 \intertext{and choose}
		\widetilde\vv^{(k)}\in\partial F(\widetilde\vx^{(k)}) ~\text{such that}~ \overline{\rho}( \vx^{(k)}-\widetilde\vx^{(k)})\in\partial r(\widetilde\vx^{(k)})+ \widetilde\vv^{(k)}. \label{eq:def-vtilde}
	\end{align}
\end{subequations}
Note that the existence of $\widetilde\vv^{(k)}$ is guaranteed from the definition of $\widetilde\vx^{(k)}$. By Assumption~\ref{assump:subgrad-bound}, it holds that 
\begin{equation}\label{eq:subgrad-2ndmoment}
	\EE_{\xi_k} \|\vg^{(k)}\|^2\le M ^2,\quad \| \vv^{(k)}\|^2\le M ^2,  ~\textup{and}~   \|\widetilde\vv^{(k)}\|^2\le M ^2.
\end{equation}

The next result is from \cite[Lemma 3.2]{davis2019stochastic}. Its proof only relies on the definition of $\widetilde\vx^{(k)}$ and the choice of $\widetilde\vv^{(k)}$. Hence, the result still holds for our case, though the algorithm in \cite[Lemma 3.2]{davis2019stochastic} does not have an inertial term in its update.

\begin{lemma}\label{lem:xwidetilde}
	Let $\widetilde\vx^{(k)}$ and $\widetilde\vv^{(k)}$ be defined as in \eqref{eq:def-xtilde} and \eqref{eq:def-vtilde}. Then
	\begin{equation}\label{eq:relate-xtilde}
		\widetilde\vx^{(k)} =\prox_{\alpha_{k} r}\big(\alpha_{k}\overline{\rho}\vx^{(k)}-\alpha_{k}\widetilde\vv^{(k)}+(1-\alpha_{k}\overline{\rho})\widetilde\vx^{(k)} \big) .
	\end{equation}
\end{lemma}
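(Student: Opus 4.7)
The plan is to verify the claimed identity by checking the optimality condition of the proximal subproblem defining $\prox_{\alpha_k r}(\cdot)$. Since $r$ is convex, that first-order condition is both necessary and sufficient, and the whole proof amounts to a one-line algebraic rearrangement.

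Concretely, write the argument of the proximal operator as
\[
\vz^{(k)} := \alpha_{k}\overline{\rho}\vx^{(k)}-\alpha_{k}\widetilde\vv^{(k)}+(1-\alpha_{k}\overline{\rho})\widetilde\vx^{(k)}.
\]
By definition, $\widetilde\vx^{(k)} = \prox_{\alpha_k r}(\vz^{(k)})$ iff $\widetilde\vx^{(k)}$ is the unique minimizer of $r(\vy) + \frac{1}{2\alpha_k}\|\vy - \vz^{(k)}\|^2$, which (by convexity of $r$) is equivalent to the inclusion
\[
\frac{1}{\alpha_k}\bigl(\vz^{(k)} - \widetilde\vx^{(k)}\bigr) \in \partial r(\widetilde\vx^{(k)}).
\]
So my first step is to state this equivalence explicitly, citing convexity of $r$ to justify that the condition is sufficient.

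The second step is a direct computation: subtract $\widetilde\vx^{(k)}$ from $\vz^{(k)}$, collect like terms, and observe that the $\widetilde\vx^{(k)}$ contributions combine to $-\alpha_k\overline{\rho}\widetilde\vx^{(k)}$, yielding
\[
\vz^{(k)} - \widetilde\vx^{(k)} = \alpha_k\bigl[\overline{\rho}(\vx^{(k)} - \widetilde\vx^{(k)}) - \widetilde\vv^{(k)}\bigr],
\]
so that $\frac{1}{\alpha_k}(\vz^{(k)} - \widetilde\vx^{(k)}) = \overline{\rho}(\vx^{(k)} - \widetilde\vx^{(k)}) - \widetilde\vv^{(k)}$. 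The third and final step is to invoke the defining property \eqref{eq:def-vtilde} of $\widetilde\vv^{(k)}$, which says exactly that this vector lies in $\partial r(\widetilde\vx^{(k)})$. This closes the optimality inclusion and proves the lemma.

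There is no real obstacle here beyond bookkeeping; the lemma is essentially a restatement of the optimality condition for the Moreau proximal subproblem rescaled by $\alpha_k\overline{\rho}$. The only point that deserves a brief sentence in the writeup is uniqueness of the prox (needed to conclude equality, not merely inclusion), which follows from strong convexity of $r(\cdot) + \frac{1}{2\alpha_k}\|\cdot - \vz^{(k)}\|^2$.
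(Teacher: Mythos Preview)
Your proof is correct and is exactly the argument the paper has in mind: the paper does not write out its own proof but cites \cite[Lemma~3.2]{davis2019stochastic} and remarks that the proof ``only relies on the definition of $\widetilde\vx^{(k)}$ and the choice of $\widetilde\vv^{(k)}$,'' which is precisely your verification of the prox optimality inclusion $\frac{1}{\alpha_k}(\vz^{(k)}-\widetilde\vx^{(k)})=\overline{\rho}(\vx^{(k)}-\widetilde\vx^{(k)})-\widetilde\vv^{(k)}\in\partial r(\widetilde\vx^{(k)})$ from \eqref{eq:def-vtilde}.
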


The next lemma extends the hypomonotonicity property of a weakly-convex function, in order to deal with the case with delayed subgradients.

\begin{lemma}
	Let $\widetilde\vx^{(k)}$, $\vv^{(k)}$ and $\widetilde\vv^{(k)}$ be defined as in \eqref{eq:def-xvv}. Then under Assumption~\ref{assump:weak-cvx}, it holds
	\begin{equation}\label{eq:bd-crs-term}
		\begin{aligned}
			&~-\big\langle    \vx^{(k)}-\widetilde\vx^{(k)}, \vv^{(k)} -\widetilde\vv^{(k)} \big\rangle \\
			\le & ~\textstyle F(\vx^{(k)}) - F(\vx^{(k-\tau_k)}) + \frac{\rho}{2}\|\vx^{(k)}-\widetilde\vx^{(k)}\|^2+ \frac{\rho}{2}\|\vx^{(k-\tau_k)}-\widetilde\vx^{(k)}\|^2-\big\langle    \vx^{(k)}-\vx^{(k-\tau_k)} ,\vv^{(k)} \big\rangle.
		\end{aligned}
	\end{equation}
\end{lemma}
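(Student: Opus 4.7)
The plan is to derive the inequality by pairing two applications of the weak convexity bound \eqref{def:weakly} and then inserting the delay by a simple algebraic split. The key observation is that although $\vv^{(k)}$ lives in $\partial F(\vx^{(k-\tau_k)})$ rather than $\partial F(\vx^{(k)})$, we can still feed it into the weak-convexity inequality at the base point $\vx^{(k-\tau_k)}$, and the mismatch $\vx^{(k)}-\vx^{(k-\tau_k)}$ will appear naturally as the error term on the right-hand side of \eqref{eq:bd-crs-term}.

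Concretely, I would first apply \eqref{def:weakly} at $\vx^{(k-\tau_k)}$ with subgradient $\vv^{(k)}\in\partial F(\vx^{(k-\tau_k)})$ and test point $\widetilde\vx^{(k)}$, yielding
\[
F(\widetilde\vx^{(k)})\ge F(\vx^{(k-\tau_k)})+\langle \vv^{(k)},\widetilde\vx^{(k)}-\vx^{(k-\tau_k)}\rangle-{\textstyle\frac{\rho}{2}}\|\widetilde\vx^{(k)}-\vx^{(k-\tau_k)}\|^2.
\]
Then I would apply \eqref{def:weakly} a second time at $\widetilde\vx^{(k)}$ with subgradient $\widetilde\vv^{(k)}\in\partial F(\widetilde\vx^{(k)})$ and test point $\vx^{(k)}$. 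Summing the two and moving $F(\vx^{(k)})-F(\vx^{(k-\tau_k)})$ to the right gives
\[
-\langle \vv^{(k)},\vx^{(k-\tau_k)}-\widetilde\vx^{(k)}\rangle-\langle \widetilde\vv^{(k)},\widetilde\vx^{(k)}-\vx^{(k)}\rangle\le F(\vx^{(k)})-F(\vx^{(k-\tau_k)})+{\textstyle\frac{\rho}{2}}\|\widetilde\vx^{(k)}-\vx^{(k-\tau_k)}\|^2+{\textstyle\frac{\rho}{2}}\|\vx^{(k)}-\widetilde\vx^{(k)}\|^2.
\]

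The final step is to reconcile the left-hand side with the target expression $-\langle\vx^{(k)}-\widetilde\vx^{(k)},\vv^{(k)}-\widetilde\vv^{(k)}\rangle$. I would simply write the identity
\[
-\langle \vv^{(k)},\vx^{(k-\tau_k)}-\widetilde\vx^{(k)}\rangle=\langle \vv^{(k)},\vx^{(k)}-\vx^{(k-\tau_k)}\rangle-\langle \vv^{(k)},\vx^{(k)}-\widetilde\vx^{(k)}\rangle,
\]
and substitute it into the previous display; the two remaining inner products collapse to $-\langle\vx^{(k)}-\widetilde\vx^{(k)},\vv^{(k)}-\widetilde\vv^{(k)}\rangle$, and the stray term $\langle\vv^{(k)},\vx^{(k)}-\vx^{(k-\tau_k)}\rangle$ is moved to the right-hand side to produce the claimed bound \eqref{eq:bd-crs-term}.

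I do not anticipate a serious obstacle here: the hypomonotonicity estimate \eqref{def:monotone} alone would suffice if the subgradient $\vv^{(k)}$ were evaluated at the current iterate, and the only subtlety is choosing which base points to linearize around so that the ``delay'' gap $\vx^{(k)}-\vx^{(k-\tau_k)}$ gets absorbed cleanly. Picking $\vx^{(k-\tau_k)}$ and $\widetilde\vx^{(k)}$ (rather than, say, $\vx^{(k)}$ and $\widetilde\vx^{(k)}$) in the two weak-convexity bounds is what makes the $F(\vx^{(k)})-F(\vx^{(k-\tau_k)})$ term appear with the correct sign and keeps the quadratic error at $\tfrac{\rho}{2}$ rather than $\rho$.
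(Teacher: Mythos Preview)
Your proposal is correct and is essentially identical to the paper's proof: the paper also applies \eqref{def:weakly} once at $\widetilde\vx^{(k)}$ with subgradient $\widetilde\vv^{(k)}$ and test point $\vx^{(k)}$, once at $\vx^{(k-\tau_k)}$ with subgradient $\vv^{(k)}$ and test point $\widetilde\vx^{(k)}$, then adds the two and uses the same algebraic splitting to extract the delay term $-\langle \vx^{(k)}-\vx^{(k-\tau_k)},\vv^{(k)}\rangle$.
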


\begin{proof}
	From the $\rho$-weak convexity of $F$, it follows that
	\begin{equation}\label{eq:term-1-vtilde}
	\textstyle	\big\langle    \vx^{(k)}-\widetilde\vx^{(k)}, \widetilde\vv^{(k)} \big\rangle \le F(\vx^{(k)})-F(\widetilde\vx^{(k)}) + \frac{\rho}{2}\|\vx^{(k)}-\widetilde\vx^{(k)}\|^2,
	\end{equation}
	and
	\begin{equation}\label{eq:term-2-v}
		\textstyle-\big\langle    \vx^{(k-\tau_k)}-\widetilde\vx^{(k)}, \vv^{(k)} \big\rangle \le F(\widetilde\vx^{(k)}) - F(\vx^{(k-\tau_k)}) + \frac{\rho}{2}\|\vx^{(k-\tau_k)}-\widetilde\vx^{(k)}\|^2.
	\end{equation}
	Hence, we obtain the desired result by adding the two inequalities in \eqref{eq:term-1-vtilde} and \eqref{eq:term-2-v}, and also noticing 
	$$-\big\langle    \vx^{(k)}-\widetilde\vx^{(k)} ,\vv^{(k)} -\widetilde\vv^{(k)} \big\rangle = \big\langle    \vx^{(k)}-\widetilde\vx^{(k)}, \widetilde\vv^{(k)} \big\rangle -\big\langle    \vx^{(k-\tau_k)}-\widetilde\vx^{(k)}, \vv^{(k)} \big\rangle -\big\langle    \vx^{(k)}-\vx^{(k-\tau_k)} ,\vv^{(k)} \big\rangle.$$
	This completes the proof.
\end{proof}

The result in the next lemma establishes a descent property of the iterate sequence from Alg.~\ref{alg:async-hvb-sgm} by relating it to the virtual sequence $\{\widetilde\vx^{(k)}\}$. It extends the result in \cite[Lemma 3.3]{davis2019stochastic}.

\begin{lemma}\label{lem:keybound2}
	Let $\overline{\rho}\in (\rho,2\rho]$ and $\alpha_{k}\in (0, 1/\overline{\rho}]$ for all $k$. Under Assumptions~\ref{assump:weak-cvx}--\ref{assump:subgrad-bound}, the iterate sequence $\{\vx^{(k)}\}$ from Alg.~\ref{alg:async-hvb-sgm} with stepsize sequence $\{\alpha_k\}$ and inertial parameter $\{\beta_k\}$ satisfies 
	\begin{equation}\label{eq:keybound2}
		\begin{aligned}
			\EE_{\xi_k}  \|\vx^{(k+1)}-\widetilde\vx^{(k)} \|^2  
			\le &~  \big(  1- 2\alpha_{k}(\overline{\rho}-\rho)   +c_k \big)  \| \vx^{(k)}-\widetilde\vx^{(k)}   \|^2  +(\textstyle 2+\frac{1}{c_k})\beta_k^2 \|\vx^{(k)}-\vx^{(k-1)}\|^2\\
			&~ +8\alpha_{k}^2 M ^2+ 2\alpha_k(1-\alpha_{k}\overline{\rho})
			\widehat\cE_k, 
		\end{aligned}
	\end{equation}
	where $\widetilde\vx^{(k)}$ is defined in \eqref{eq:def-xtilde}, $c_k$ is any positive number, and
	\begin{equation}\label{eq:delay-error-term-hat}
		\widehat \cE_k :=  \textstyle F(\vx^{(k)}) - F(\vx^{(k-\tau_k)}) - \frac{\rho}{2}\|\vx^{(k)}-\widetilde\vx^{(k)}\|^2+ \frac{\rho}{2}\|\vx^{(k-\tau_k)}-\widetilde\vx^{(k)}\|^2 -\big\langle    \vx^{(k)}-\vx^{(k-\tau_k)} ,\vv^{(k)} \big\rangle. 
	\end{equation}
\end{lemma}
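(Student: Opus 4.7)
\medskip
\noindent\textbf{Proof plan.}
The plan is to push the non-expansiveness of the proximal map through a carefully chosen three-term decomposition, then use the auxiliary hypomonotonicity bound \eqref{eq:bd-crs-term} to absorb the noisy cross-term into $\widehat\cE_k$. First I would set
\[
\vu^{(k)} := \vx^{(k)} - \alpha_k \vg^{(k)} + \beta_k(\vx^{(k)}-\vx^{(k-1)}), \qquad \vw^{(k)} := \alpha_k\overline{\rho}\,\vx^{(k)} - \alpha_k \widetilde\vv^{(k)} + (1-\alpha_k\overline{\rho})\widetilde\vx^{(k)},
\]
so that $\vx^{(k+1)} = \prox_{\alpha_k r}(\vu^{(k)})$ by the algorithm, while $\widetilde\vx^{(k)} = \prox_{\alpha_k r}(\vw^{(k)})$ by Lemma~\ref{lem:xwidetilde}. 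Non-expansiveness of the proximal operator then gives $\|\vx^{(k+1)} - \widetilde\vx^{(k)}\|^2 \le \|\vu^{(k)} - \vw^{(k)}\|^2$, and a direct computation produces the decomposition
\[
\vu^{(k)} - \vw^{(k)} = \underbrace{(1-\alpha_k\overline{\rho})(\vx^{(k)} - \widetilde\vx^{(k)})}_{=:P} + \underbrace{\alpha_k(\widetilde\vv^{(k)} - \vg^{(k)})}_{=:Q} + \underbrace{\beta_k(\vx^{(k)}-\vx^{(k-1)})}_{=:R}.
\]

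Next I would expand $\|P+Q+R\|^2$ into the six standard terms and take the conditional expectation $\EE_{\xi_k}$. Unbiasedness (Assumption~\ref{assump:unbiased}) turns $\EE_{\xi_k}[\vg^{(k)}]$ into $\vv^{(k)}$ in the linear terms, and Assumption~\ref{assump:subgrad-bound} together with \eqref{eq:subgrad-2ndmoment} yields $\EE_{\xi_k}\|Q\|^2 \le 4\alpha_k^2 M^2$. The delicate cross term $2\EE_{\xi_k}\langle P, Q\rangle = 2\alpha_k(1-\alpha_k\overline{\rho})\langle \vx^{(k)} - \widetilde\vx^{(k)}, \widetilde\vv^{(k)} - \vv^{(k)}\rangle$ is then controlled by \eqref{eq:bd-crs-term}, which after rearranging produces exactly $2\alpha_k(1-\alpha_k\overline{\rho})\big[\widehat\cE_k + \rho\|\vx^{(k)} - \widetilde\vx^{(k)}\|^2\big]$; note that $1-\alpha_k\overline{\rho}\ge 0$ by the stepsize assumption, so the sign is correct. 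The term $2\langle P, R\rangle$ is handled by Young's inequality with parameter $c_k$, giving $c_k\|\vx^{(k)} - \widetilde\vx^{(k)}\|^2 + \frac{\beta_k^2}{c_k}\|\vx^{(k)}-\vx^{(k-1)}\|^2$ after using $(1-\alpha_k\overline{\rho})^2 \le 1$. The last term $2\EE_{\xi_k}\langle Q, R\rangle$ is bounded by the standard inequality $2\langle Q, R\rangle \le \|Q\|^2 + \|R\|^2/\ldots$; taking $\|Q\|^2 + \|R\|^2$ gives an extra $4\alpha_k^2 M^2 + \beta_k^2\|\vx^{(k)}-\vx^{(k-1)}\|^2$ in expectation, which accounts for the factors $8\alpha_k^2 M^2$ and the $2$ appearing in $(2+\tfrac{1}{c_k})\beta_k^2$.

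Finally, I need to absorb the coefficient of $\|\vx^{(k)} - \widetilde\vx^{(k)}\|^2$. Summing the contributions from $\|P\|^2$ and $2\langle P, Q\rangle$, that coefficient equals
\[
(1-\alpha_k\overline{\rho})^2 + 2\alpha_k(1-\alpha_k\overline{\rho})\rho = 1 - 2\alpha_k(\overline{\rho}-\rho) + \alpha_k^2 \overline{\rho}(\overline{\rho}-2\rho),
\]
and the hypothesis $\overline{\rho}\le 2\rho$ makes the last term non-positive. Adding the $c_k\|\vx^{(k)} - \widetilde\vx^{(k)}\|^2$ from the Young step yields the claimed factor $1 - 2\alpha_k(\overline{\rho}-\rho) + c_k$, completing the proof.

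The main obstacle is purely combinatorial: orchestrating the three-way Young/AM--GM splittings so that the $\widehat\cE_k$ contribution carries the clean coefficient $2\alpha_k(1-\alpha_k\overline{\rho})$ (which is essential for the subsequent telescoping argument in the convergence rate proof) while the noise variance collapses to $8\alpha_k^2 M^2$ \emph{independent} of $c_k$. The identity $\overline{\rho}\le 2\rho$ is the one algebraic step where the weak-convexity constant enters nontrivially; everything else is bookkeeping.
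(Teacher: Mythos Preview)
Your proof is correct and follows essentially the same route as the paper: non-expansiveness of $\prox_{\alpha_k r}$ together with Lemma~\ref{lem:xwidetilde}, expansion of the resulting three-term square, conditional expectation to replace $\vg^{(k)}$ by $\vv^{(k)}$, the hypomonotonicity bound \eqref{eq:bd-crs-term} for the $\langle P,Q\rangle$ term, Young's inequality with parameter $c_k$ for $\langle P,R\rangle$, and the algebraic identity $(1-\alpha_k\overline{\rho})^2+2\alpha_k(1-\alpha_k\overline{\rho})\rho=1-2\alpha_k(\overline{\rho}-\rho)+\alpha_k^2\overline{\rho}(\overline{\rho}-2\rho)$ capped via $\overline{\rho}\le 2\rho$. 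The only cosmetic difference is that the paper groups $P+R$ first and applies Young to $(\|P\|+\|R\|)^2$, whereas you expand all six terms separately; the resulting constants are identical.
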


The next lemma will be used to bound $\sum_{k=1}^K \|\vx^{(k+1)} - \vx^{(k)}\|^2$ for any given integer $K$.
\begin{lemma}
	Let $\{\vx^{(k)}\}$ be generated from Alg.~\ref{alg:async-hvb-sgm}. Under Assumptions~\ref{assump:weak-cvx} and  \ref{assump:subgrad-bound}, it holds for any $\gamma > 0$ that
	\begin{equation}\label{eq:bd-x-diff0}
		\big(\textstyle 1 -\gamma -\frac{\alpha_k\rho}{2} - \frac{\beta_k}{2}\big)\EE\|\vx^{(k+1)} - \vx^{(k)}\|^2 \le \alpha_{k} \EE\big(\phi(\vx^{(k)}) - \phi(\vx^{(k+1)}) \big) + \textstyle \frac{\beta_k}{2}  \EE\|\vx^{(k)}-\vx^{(k-1)}\|^2
		+\frac{\alpha_{k}^2M ^2}{\gamma}.
	\end{equation}
	
\end{lemma}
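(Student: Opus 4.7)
My approach is to combine the first-order optimality of the prox step with weak convexity of $F$, in order to convert the inner product against the delayed subgradient $\vg^{(k)}$ into a one-step $\phi$-decrease plus controllable error terms. Write the optimality condition for~\eqref{eq:update-x-2} as $\alpha_k\vs^{(k+1)}=-(\vx^{(k+1)}-\vx^{(k)})-\alpha_k\vg^{(k)}+\beta_k(\vx^{(k)}-\vx^{(k-1)})$ for some $\vs^{(k+1)}\in\partial r(\vx^{(k+1)})$, and then combine with convexity of $r$ in the form $\langle\vs^{(k+1)},\vx^{(k)}-\vx^{(k+1)}\rangle\le r(\vx^{(k)})-r(\vx^{(k+1)})$ to obtain the baseline inequality
\[
\|\vx^{(k+1)}-\vx^{(k)}\|^2\le\alpha_k\big[r(\vx^{(k)})-r(\vx^{(k+1)})\big]-\alpha_k\langle\vg^{(k)},\vx^{(k+1)}-\vx^{(k)}\rangle+\beta_k\langle\vx^{(k)}-\vx^{(k-1)},\vx^{(k+1)}-\vx^{(k)}\rangle.
\]

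The key step is to handle the middle inner product while circumventing the fact that $\vg^{(k)}$ is only a subgradient at the stale point $\vx^{(k-\tau_k)}$. The trick is to introduce a \emph{non-algorithmic} subgradient $\vu_k:=\EE_\xi[\tilde\nabla f(\vx^{(k)};\xi)]\in\partial F(\vx^{(k)})$, which by Jensen's inequality and Assumption~\ref{assump:subgrad-bound} satisfies $\|\vu_k\|^2\le M^2$. Splitting $\vg^{(k)}=\vu_k+(\vg^{(k)}-\vu_k)$, the first piece is absorbed by the weak-convexity inequality \eqref{def:weakly} at $\vx^{(k)}$, producing $\alpha_k[F(\vx^{(k)})-F(\vx^{(k+1)})]+\tfrac{\alpha_k\rho}{2}\|\vx^{(k+1)}-\vx^{(k)}\|^2$, while the second piece is absorbed by Young's inequality with parameter $2\gamma$, producing $\gamma\|\vx^{(k+1)}-\vx^{(k)}\|^2+\tfrac{\alpha_k^2}{4\gamma}\|\vg^{(k)}-\vu_k\|^2$. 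The inertial cross term is handled by the symmetric Young's bound $\tfrac{\beta_k}{2}\|\vx^{(k+1)}-\vx^{(k)}\|^2+\tfrac{\beta_k}{2}\|\vx^{(k)}-\vx^{(k-1)}\|^2$.

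After substituting these three bounds into the baseline inequality and collecting the $\|\vx^{(k+1)}-\vx^{(k)}\|^2$ terms, the left-hand coefficient $1-\gamma-\tfrac{\alpha_k\rho}{2}-\tfrac{\beta_k}{2}$ emerges. Taking total expectation and invoking $\EE\|\vg^{(k)}-\vu_k\|^2\le 2\EE\|\vg^{(k)}\|^2+2\EE\|\vu_k\|^2\le 4M^2$ converts the residual noise contribution $\tfrac{\alpha_k^2}{4\gamma}\EE\|\vg^{(k)}-\vu_k\|^2$ into $\tfrac{\alpha_k^2M^2}{\gamma}$, matching the target expression exactly.

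The main obstacle is precisely the second step: a direct use of weak convexity with the algorithmic subgradient $\vv^{(k)}\in\partial F(\vx^{(k-\tau_k)})$ would introduce delay-dependent cross terms (this is in fact what Lemma~\ref{lem:keybound2} is forced to track through $\widehat\cE_k$), and such terms would not collapse into the clean right-hand side of \eqref{eq:bd-x-diff0}. The swap to the virtual subgradient $\vu_k$ at the \emph{current} iterate $\vx^{(k)}$ pays a bounded $O(M^2/\gamma)$ price in exchange for eliminating the delay from this lemma, which is critical for summing the bound over $k=1,\dots,K$ in the downstream rate analysis.
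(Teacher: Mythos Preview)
Your overall strategy coincides with the paper's, but the weak-convexity step is applied in the wrong direction. You take $\vu_k\in\partial F(\vx^{(k)})$ and claim that the ``first piece'' $-\alpha_k\langle\vu_k,\vx^{(k+1)}-\vx^{(k)}\rangle$ is bounded above by $\alpha_k[F(\vx^{(k)})-F(\vx^{(k+1)})]+\tfrac{\alpha_k\rho}{2}\|\vx^{(k+1)}-\vx^{(k)}\|^2$. But \eqref{def:weakly} with anchor $\vx=\vx^{(k)}$, $\vy=\vx^{(k+1)}$, $\vv=\vu_k$ reads
\[
F(\vx^{(k+1)})\ge F(\vx^{(k)})+\langle\vu_k,\vx^{(k+1)}-\vx^{(k)}\rangle-\tfrac{\rho}{2}\|\vx^{(k+1)}-\vx^{(k)}\|^2,
\]
which rearranges to $-\alpha_k\langle\vu_k,\vx^{(k+1)}-\vx^{(k)}\rangle\ge\alpha_k[F(\vx^{(k)})-F(\vx^{(k+1)})]-\tfrac{\alpha_k\rho}{2}\|\vx^{(k+1)}-\vx^{(k)}\|^2$. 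This is a \emph{lower} bound, so it cannot be substituted into the right-hand side of your baseline inequality to get an upper bound on $\|\vx^{(k+1)}-\vx^{(k)}\|^2$.

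The fix --- and exactly what the paper does --- is to anchor weak convexity at $\vx^{(k+1)}$ rather than at $\vx^{(k)}$: pick $\tilde\nabla F(\vx^{(k+1)})\in\partial F(\vx^{(k+1)})$ (your $\EE_\xi[\tilde\nabla f(\cdot;\xi)]$ construction works just as well here and still has norm $\le M$) and apply \eqref{def:weakly} with $\vx=\vx^{(k+1)}$, $\vy=\vx^{(k)}$. That yields $\langle\tilde\nabla F(\vx^{(k+1)}),\vx^{(k+1)}-\vx^{(k)}\rangle\ge F(\vx^{(k+1)})-F(\vx^{(k)})-\tfrac{\rho}{2}\|\vx^{(k+1)}-\vx^{(k)}\|^2$, which is the correct direction. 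With this single change, the rest of your plan --- splitting $\vg^{(k)}=\tilde\nabla F(\vx^{(k+1)})+(\vg^{(k)}-\tilde\nabla F(\vx^{(k+1)}))$, Young on the residual with parameter $2\gamma$, symmetric Young on the inertial term, and $\EE\|\vg^{(k)}-\tilde\nabla F(\vx^{(k+1)})\|^2\le 4M^2$ from Assumption~\ref{assump:subgrad-bound} --- goes through verbatim and reproduces \eqref{eq:bd-x-diff0}. Your intuition that the point is to swap in a non-algorithmic bounded subgradient is right; only the anchor was off.
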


\begin{proof}
	By the convexity of $r$, we have $\big\langle \vx^{(k)} - \vx^{(k+1)}, \tilde\nabla r(\vx^{(k+1)}) \big\rangle \le r(\vx^{(k)}) - r(\vx^{(k+1)})$. In addition, it follows from \eqref{eq:update-x-2} that $\vzero\in \alpha_{k} \partial r(\vx^{(k+1)})+ \vx^{(k+1)} -\vx^{(k)} + \alpha_{k} \vg^{(k)} -\beta_k (\vx^{(k)}-\vx^{(k-1)})$. Hence, 
	\begin{equation}\label{eq:bd-x-diff}
		\big\langle \vx^{(k+1)} - \vx^{(k)},  \vx^{(k+1)} -\vx^{(k)} + \alpha_{k} \vg^{(k)} -\beta_k (\vx^{(k)}-\vx^{(k-1)})\big\rangle \le \alpha_{k} \big(r(\vx^{(k)}) - r(\vx^{(k+1)}) \big).
	\end{equation}
	By the $\rho$-weak convexity of $F$, it holds
	\begin{equation*}
	\textstyle	\big\langle \vx^{(k+1)} - \vx^{(k)}, \tilde\nabla F(\vx^{(k+1)}) \big\rangle\ge F(\vx^{(k+1)}) - F(\vx^{(k)}) - \frac{\rho}{2}\|\vx^{(k+1)} -\vx^{(k)}\|^2,
	\end{equation*}
	and thus
	\begin{equation}\label{eq:bd-x-g}
\begin{aligned}
\big\langle \vx^{(k+1)} - \vx^{(k)}, \alpha_{k} \vg^{(k)}\big\rangle \ge &~ \alpha_{k} \big\langle \vx^{(k+1)} - \vx^{(k)}, \vg^{(k)}- \tilde\nabla F(\vx^{(k+1)})\big\rangle\\
&~+ \alpha_{k}\big(F(\vx^{(k+1)}) - F(\vx^{(k)}) - \frac{\rho}{2}\|\vx^{(k+1)} -\vx^{(k)}\|^2\big).
\end{aligned}
	\end{equation}
	Plugging \eqref{eq:bd-x-g} into \eqref{eq:bd-x-diff} and rearranging terms give
	\begin{equation}\label{eq:bd-x-diff2}
		\begin{aligned}
			\big(\textstyle 1 -\frac{\alpha_k\rho}{2}\big)\|\vx^{(k+1)} - \vx^{(k)}\|^2 \le &~\alpha_{k} \big(\phi(\vx^{(k)}) - \phi(\vx^{(k+1)}) \big) + \beta_k \big\langle \vx^{(k+1)} - \vx^{(k)}, \vx^{(k)}-\vx^{(k-1)} \big\rangle\\
			&~ - \alpha_{k} \big\langle \vx^{(k+1)} - \vx^{(k)}, \vg^{(k)}- \tilde\nabla F(\vx^{(k+1)})\big\rangle.
		\end{aligned}
	\end{equation}
	Now using Assumption~\ref{assump:subgrad-bound} and the Young's inequality, we have
	\begin{equation*}
		\begin{aligned}
			\big(\textstyle 1 -\frac{\alpha_k\rho}{2}\big)\EE\|\vx^{(k+1)} - \vx^{(k)}\|^2 \le &~\alpha_{k} \EE\big(\phi(\vx^{(k)}) - \phi(\vx^{(k+1)}) \big) + \textstyle \frac{\beta_k}{2} \EE\big( \|\vx^{(k+1)} - \vx^{(k)}\|^2 + \|\vx^{(k)}-\vx^{(k-1)}\|^2 \big)\\
			&~ +\textstyle \gamma\EE\|\vx^{(k+1)} - \vx^{(k)}\|^2 + \frac{\alpha_k^2M ^2}{\gamma}.
		\end{aligned}
	\end{equation*}
	Rearranging terms in the above inequality gives the desired result.
\end{proof}

\subsection{Convergence rate results}
In this subsection, we establish the convergence rate results of Alg.~\ref{alg:async-hvb-sgm} for nonsmooth weakly-convex problems by using the lemmas in the previous subsection. We first give a generic result as follows. 

\begin{theorem}\label{thm:prox-sync}
	Given a positive integer $K$, let $\{\vx^{(k)}\}_{k=1}^K$ 
	be generated from Alg.~\ref{alg:async-hvb-sgm} with a stepsize sequence $\{\alpha_{k}\}$ and inertial parameter sequence $\{\beta_k\}$. Under Assumptions~\ref{assump:weak-cvx}--\ref{assump:subgrad-bound}, let $\overline \rho \in (\rho, 2\rho]$ and assume $\alpha_k\in (0, 1/\overline\rho]$ for all $k$.
	Then  
	\begin{equation}\label{eq:nsm-general}
		\begin{aligned}
			\EE\big\|\nabla\phi_{1/\overline{\rho}} (\vx^{(T)} )  \big\|^2  \le & \textstyle\frac{2\overline{\rho}}{(\overline{\rho}-\rho)\sum_{k=k_0}^{K} \alpha_{k}} 
			\Big[ \EE\big[\phi_{1/\overline{\rho}}(\vx^{(k_0)})  -\phi^*\big]  + \frac{\overline{\rho}}{2}\sum_{k=k_0}^{K}( 2+\frac{2}{\alpha_{k}(\overline{\rho}-\rho)}) \beta_k^2 \EE\|\vx^{(k)}-\vx^{(k-1)}\|^2  \\
			& \hspace{2.5cm}\textstyle+ 4\overline{\rho} M ^2\sum_{k=k_0}^{K}\alpha_{k}^2+\sum_{k=k_0}^{K} \alpha_k \overline{\rho} (1-\alpha_{k}\overline{\rho})\EE[\cE_k]
			\Big],
		\end{aligned}
	\end{equation}
	where $k_0\ge1$ is an integer, 
	$T$ is randomly selected from $\{k_0,\ldots, K\}$ by the distribution	
	\begin{equation}\label{eq:def-T}
	\textstyle	\Prob(T=k) = \frac{\alpha_{k}}{\sum_{j=k_0}^{K}\alpha_{j}},\,\forall\, k=k_0,\ldots, K,
	\end{equation}
and
\begin{equation}\label{eq:delay-error-term}
		\cE_k :=  \textstyle F(\vx^{(k)}) - F(\vx^{(k-\tau_k)}) +  \big(\frac{\rho}{2} + \frac{\rho^2}{\overline\rho - \rho}\big) \|\vx^{(k-\tau_k)}-\vx^{(k)}\|^2 -\big\langle    \vx^{(k)}-\vx^{(k-\tau_k)} ,\vv^{(k)} \big\rangle. 
	\end{equation}	
\end{theorem}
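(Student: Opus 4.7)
My plan is to drive the argument through the Moreau envelope, using Lemma~\ref{lem:keybound2} as the one-step descent estimate and converting the error term $\widehat\cE_k$ into the cleaner quantity $\cE_k$ at the cost of an absorbable multiple of $\|\vx^{(k)}-\widetilde\vx^{(k)}\|^2$.

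\textbf{Step 1: Moreau-envelope descent.} Since $\widetilde\vx^{(k)} = \prox_{\phi/\overline\rho}(\vx^{(k)})$ is a feasible candidate in the definition of $\phi_{1/\overline\rho}(\vx^{(k+1)})$, I get
\[
\phi_{1/\overline\rho}(\vx^{(k+1)}) - \phi_{1/\overline\rho}(\vx^{(k)}) \le \tfrac{\overline\rho}{2}\|\vx^{(k+1)}-\widetilde\vx^{(k)}\|^2 - \tfrac{\overline\rho}{2}\|\vx^{(k)}-\widetilde\vx^{(k)}\|^2.
\]
Then I take $\EE_{\xi_k}$ and invoke Lemma~\ref{lem:keybound2} to upper bound the first term. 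The free parameter $c_k$ from that lemma will be tuned in Step~3.

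\textbf{Step 2: Replace $\widehat\cE_k$ by $\cE_k$.} Subtracting \eqref{eq:delay-error-term} from \eqref{eq:delay-error-term-hat} and writing $\vx^{(k-\tau_k)}-\widetilde\vx^{(k)} = (\vx^{(k-\tau_k)}-\vx^{(k)})+(\vx^{(k)}-\widetilde\vx^{(k)})$, I apply Young's inequality with parameter $\epsilon = (\overline\rho-\rho)/(2\rho)$, which is chosen so that the cross-term coefficient on $\|\vx^{(k)}-\vx^{(k-\tau_k)}\|^2$ exactly matches the extra $\rho^2/(\overline\rho-\rho)$ built into $\cE_k$. This yields
\[
\widehat\cE_k \le \cE_k + \tfrac{\overline\rho-\rho}{4}\|\vx^{(k)}-\widetilde\vx^{(k)}\|^2,
\]
and after multiplying by $\alpha_k\overline\rho(1-\alpha_k\overline\rho)$ and using $\alpha_k\overline\rho\le 1$, the residual term contributes at most $\tfrac{\alpha_k\overline\rho(\overline\rho-\rho)}{4}\|\vx^{(k)}-\widetilde\vx^{(k)}\|^2$.

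\textbf{Step 3: Choose $c_k$ and isolate the gradient norm.} I pick $c_k=\alpha_k(\overline\rho-\rho)/2$, so $2+1/c_k = 2 + 2/[\alpha_k(\overline\rho-\rho)]$, matching the inertial coefficient in \eqref{eq:nsm-general}. Combining Steps~1--2, the net coefficient on $\|\vx^{(k)}-\widetilde\vx^{(k)}\|^2$ becomes
\[
\tfrac{\overline\rho}{2}\bigl[c_k-2\alpha_k(\overline\rho-\rho)\bigr] + \tfrac{\alpha_k\overline\rho(\overline\rho-\rho)}{4} = -\tfrac{\alpha_k\overline\rho(\overline\rho-\rho)}{2}.
\]
Using the identity $\|\vx^{(k)}-\widetilde\vx^{(k)}\|^2 = \overline\rho^{-2}\|\nabla\phi_{1/\overline\rho}(\vx^{(k)})\|^2$ from Lemma~\ref{lem:prox-grad}, this rearranges into a per-iteration descent inequality of the form $\tfrac{\alpha_k(\overline\rho-\rho)}{2\overline\rho}\EE\|\nabla\phi_{1/\overline\rho}(\vx^{(k)})\|^2 \le \EE[\phi_{1/\overline\rho}(\vx^{(k)})-\phi_{1/\overline\rho}(\vx^{(k+1)})] + (\text{inertial}) + (\text{variance}) + (\text{delay})$.

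\textbf{Step 4: Sum, telescope, and randomize.} Taking full expectation and summing over $k=k_0,\dots,K$, the Moreau-envelope terms telescope; the lower bound $\phi_{1/\overline\rho}(\vx^{(K+1)})\ge \phi^*$ gives the initial-gap term $\EE[\phi_{1/\overline\rho}(\vx^{(k_0)})-\phi^*]$. Dividing both sides by $\tfrac{\overline\rho-\rho}{2\overline\rho}\sum_{k=k_0}^{K}\alpha_k$ and recognizing the LHS as $\EE\|\nabla\phi_{1/\overline\rho}(\vx^{(T)})\|^2$ under the sampling rule \eqref{eq:def-T} produces exactly \eqref{eq:nsm-general}.

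The main obstacle is Step~2: picking the Young's parameter so that the leftover multiple of $\|\vx^{(k)}-\widetilde\vx^{(k)}\|^2$ is small enough to be absorbed by the descent term in Step~3 while producing the precise coefficient $\tfrac{\rho}{2}+\tfrac{\rho^2}{\overline\rho-\rho}$ appearing in \eqref{eq:delay-error-term}. Once that exact accounting is done, the remaining steps are essentially bookkeeping: telescoping, dropping $\phi_{1/\overline\rho}(\vx^{(K+1)})$, and normalizing by $\sum_k\alpha_k$.
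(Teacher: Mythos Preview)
Your proposal is correct and follows essentially the same route as the paper: Moreau-envelope descent via Lemma~\ref{lem:keybound2}, the same Young's-inequality conversion $\widehat\cE_k \le \cE_k + \tfrac{\overline\rho-\rho}{4}\|\vx^{(k)}-\widetilde\vx^{(k)}\|^2$, the same choice $c_k=\tfrac{1}{2}\alpha_k(\overline\rho-\rho)$, and then telescoping and randomizing over $T$. The only cosmetic difference is that the paper first sums \eqref{eq:phi-ineq1-2} over $k$ and then specializes $c_k$, whereas you fix $c_k$ per iteration first; the arithmetic is identical.
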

\begin{proof}
	By the definition of $\phi_\lambda$ in \eqref{eq:moreau} and Lemma~\ref{lem:keybound2}, we have
	\begin{align}\label{eq:phi-ineq1}
		&~\EE_{\xi_k} \big[ \phi_{1/\overline{\rho}}(\vx^{(k+1)})\big] \nonumber\\
		\le&~\textstyle\EE_{\xi_k} \big[  \phi (\widetilde\vx^{(k)}) + \frac{\overline{\rho}}{2}\|\vx^{(k+1)}-\widetilde\vx^{(k)}\|^2\big] \nonumber\\
		\le&~\textstyle \phi (\widetilde\vx^{(k)}) +\frac{\overline{\rho}}{2} \big[ \big(1- 2\alpha_{k}(\overline{\rho}-\rho)  + c_k \big)  \| \vx^{(k)}-\widetilde\vx^{(k)}   \|^2  +( 2+\frac{1}{c_k})\beta_k^2 \|\vx^{(k)}-\vx^{(k-1)}\|^2  
		+8\alpha_{k}^2 M ^2 \big] \nonumber\\
		&+\alpha_k \overline{\rho} (1-\alpha_{k}\overline{\rho})\widehat\cE_k \nonumber \\
		=&~\textstyle \phi_{1/\overline{\rho}}(\vx^{(k)})  -\frac{\overline{\rho}}{2}\left(2\alpha_{k}(\overline{\rho}-\rho) -c_k\right)\| \vx^{(k)}-\widetilde\vx^{(k)}  \|^2+ \frac{\overline{\rho}}{2}(2+\frac{1}{c_k}) \beta_k^2 \|\vx^{(k)}-\vx^{(k-1)}\|^2 +4\overline{\rho}\alpha_{k}^2 M ^2 \nonumber\\
		&+\alpha_k \overline{\rho} (1-\alpha_{k}\overline{\rho})\widehat\cE_k.
	\end{align}
where $\widehat\cE_k$ is defined in \eqref{eq:delay-error-term-hat}. 
By the Young's inequality, we have
\begin{align*}
\textstyle - \frac{\rho}{2}\|\vx^{(k)}-\widetilde\vx^{(k)}\|^2+ \frac{\rho}{2}\|\vx^{(k-\tau_k)}-\widetilde\vx^{(k)}\|^2 = &~ \textstyle \frac{\rho}{2}\|\vx^{(k-\tau_k)}-\vx^{(k)}\|^2 + \rho \langle \vx^{(k-\tau_k)}-\vx^{(k)}, \vx^{(k)}-\widetilde\vx^{(k)}\rangle \\
\le & ~ \textstyle \big(\frac{\rho}{2} + \frac{\rho^2}{\bar\rho - \rho}\big) \|\vx^{(k-\tau_k)}-\vx^{(k)}\|^2 + \frac{\bar\rho - \rho}{4}\|\vx^{(k)}-\widetilde\vx^{(k)}\|^2.	
\end{align*}
Using the definition of $\widehat\cE_k$ in \eqref{eq:delay-error-term-hat} and substituting the inequality above into \eqref{eq:phi-ineq1}, we have from $1-\alpha_{k}\overline{\rho} \le 1$ and the definition of $\cE_k$ in \eqref{eq:delay-error-term} that
\begin{align}\label{eq:phi-ineq1-2}
&~\EE_{\xi_k} \big[ \phi_{1/\overline{\rho}}(\vx^{(k+1)})\big] \nonumber\\
\le &~ ~\textstyle \phi_{1/\overline{\rho}}(\vx^{(k)})  -\frac{\overline{\rho}}{2}\left(\frac{3}{2}\alpha_{k}(\overline{\rho}-\rho) -c_k\right)\| \vx^{(k)}-\widetilde\vx^{(k)}  \|^2+ \frac{\overline{\rho}}{2}(2+\frac{1}{c_k}) \beta_k^2 \|\vx^{(k)}-\vx^{(k-1)}\|^2 +4\overline{\rho}\alpha_{k}^2 M ^2 \nonumber\\
		&+\alpha_k \overline{\rho} (1-\alpha_{k}\overline{\rho})\cE_k.
\end{align}
	Taking full expectation and summing the inequality in \eqref{eq:phi-ineq1-2} over $k=k_0,\ldots,K$, we have 
	\begin{align}
		&~\EE \big[ \phi_{1/\overline{\rho}}(\vx^{(K+1)})\big] \nonumber\\
		\le&~\textstyle \EE\big[ \phi_{1/\overline{\rho}}(\vx^{(k_0)}) \big]   -\frac{\overline{\rho}}{2}\sum_{k=k_0}^{K}\big( \frac{3}{2} \alpha_{k}(\overline{\rho}-\rho) -c_k\big)   \EE\| \vx^{(k)}-\widetilde\vx^{(k)}  \|^2
		\nonumber \\
		&~ \textstyle + \frac{\overline{\rho}}{2}\sum_{k=k_0}^{K}( 2+\frac{1}{c_k}) \beta_k^2 \EE\|\vx^{(k)}-\vx^{(k-1)}\|^2 + 4\overline{\rho} M ^2\sum_{k=k_0}^{K}\alpha_{k}^2+\sum_{k=k_0}^{K} \alpha_k \overline{\rho} (1-\alpha_{k}\overline{\rho})\cE_k. \nonumber
	\end{align}
	Choose $c_k= \frac{1}{2} \alpha_{k}(\overline{\rho}-\rho)$	for all $k\ge 1$ and rearrange the above inequality. 
	We obtain 
	\begin{equation} \label{eq:phi-ineq2}
		\begin{aligned}
		\textstyle	\frac{\overline{\rho}(\overline{\rho}-\rho)}{2} \sum_{k=k_0}^{K} \alpha_{k} \EE\| \vx^{(k)}-\widetilde\vx^{(k)}  \|^2  
			\le& ~ \textstyle \EE\big[\phi_{1/\overline{\rho}}(\vx^{(k_0)})  -\phi^*\big]  + 4\overline{\rho} M ^2\sum_{k=k_0}^{K}\alpha_{k}^2  \\ 
			&~\textstyle \hspace{-4cm}+ \frac{\overline{\rho}}{2}\sum_{k=k_0}^{K}( 2+\frac{2}{\alpha_{k}(\overline{\rho}-\rho)}) \beta_k^2 \EE\|\vx^{(k)}-\vx^{(k-1)}\|^2+\sum_{k=k_0}^{K} \alpha_k \overline{\rho} (1-\alpha_{k}\overline{\rho})\EE[\cE_k],
		\end{aligned}
	\end{equation}
	where we have used the fact $\phi_{1/\overline{\rho}}(\vx) \ge \phi^*, \forall\, \vx\in \dom(r)$.	
	From Lemma~\ref{lem:prox-grad}, we have $\| \vx^{(k)}-\widetilde\vx^{(k)}  \|^2 = \|\nabla\phi_{1/\overline{\rho}} (\vx^{(k)})  \|^2/\overline\rho^2$. Hence, plugging this equation into the left-hand side of \eqref{eq:phi-ineq2} and using the choice of $T$ in \eqref{eq:def-T}, we obtain the desired result. 
\end{proof}

To show the convergence rate in \eqref{eq:nsm-general}, it suffices to bound the summation terms on $\EE\|\vx^{(k)}-\vx^{(k-1)}\|^2$ and the delay term $\EE[\cE_k]$. 
If the delay is arbitrary, it is impossible to have convergence, and thus a certain condition on $\tau_k$ is needed. For nonsmooth problems, we make the following assumption. 
\begin{assumption}[stochastic delay]\label{assump:rand-delay}
	There is an integer $\tau$ such that the staleness $\tau_k$ follows the distribution
	$$\Prob(\tau_k = j)=p_j,\, \text{ for }j=0,1,\ldots, \tau, \ \forall\, k.$$
\end{assumption}
If the computing environment does not change during all the iterations, the assumption will hold. In addition, one can track the delay at the master node and thus estimate the probability. However, we do not need to know the values of $\{p_j\}$ or $\tau$ in the computation and analysis, but we only require their existence. A similar assumption has been made in \cite{sra2016adadelay, hannah2018unbounded,peng2019convergence}.

In the rest of this section, we show convergence rate results separately for the case with a fixed stepsize sequence and the one with a varying stepsize sequence.
\subsubsection{Convergence rate with a fixed stepsize}
In this subsubsection, we consider the case where $\alpha_k=\alpha_1$ and $\beta_k=\beta_1$ for all $k\ge1$. In this case, it is easy to bound the summation term about $\EE\|\vx^{(k)}-\vx^{(k-1)}\|^2$. 

\begin{lemma}\label{lem:bdx-fix-alpha}
	Given a positive integer $K$, let $\alpha_{k}=\frac{\alpha}{\sqrt{K}}, \forall\, k=1,\ldots,K$ for some $\alpha>0.$ Also, let $\beta_k=\frac{\beta}{K^{1/4}},\forall \, k$ for some nonnegative $\beta$ such that $\frac{\beta}{K^{1/4}} < 1 - \frac{\alpha\rho}{2\sqrt{K}}$. Then under Assumptions~\ref{assump:weak-cvx} and  \ref{assump:subgrad-bound}, it holds
	\begin{align}\label{eq:sum-xdiff-sq}
		\textstyle \sum_{k=1}^K\EE\|\vx^{(k+1)} - \vx^{(k)}\|^2 \le \frac{\alpha}{\gamma\sqrt K} \big(\phi(\vx^{(1)}) - \phi^* \big) 
		+\frac{\alpha^2M ^2}{\gamma^2}, \text{ where } \gamma = \frac{1}{2}\big(1 -\frac{\alpha\rho}{2\sqrt{K}} - \frac{\beta}{K^{1/4}} \big).
	\end{align}
\end{lemma}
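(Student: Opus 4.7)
The plan is to apply inequality \eqref{eq:bd-x-diff0} at every iteration, sum across $k=1,\ldots,K$, and absorb the shifted quadratic term into the left-hand side by exploiting the initialization $\vx^{(1)}=\vx^{(0)}$.

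First I would substitute $\alpha_k=\alpha/\sqrt{K}$ and $\beta_k=\beta/K^{1/4}$ into \eqref{eq:bd-x-diff0}, take full expectation, and sum over $k=1,\ldots,K$. The $\phi$-difference on the right-hand side telescopes to $\alpha_1\bigl(\phi(\vx^{(1)})-\EE\phi(\vx^{(K+1)})\bigr)$, which is bounded above by $\frac{\alpha}{\sqrt{K}}\bigl(\phi(\vx^{(1)})-\phi^*\bigr)$ using $\phi\ge\phi^*$. The constant $M^2$ term contributes $K\cdot\frac{\alpha_1^2 M^2}{\gamma}=\frac{\alpha^2 M^2}{\gamma}$.

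The key bookkeeping step is to handle the quadratic term $\sum_{k=1}^K\EE\|\vx^{(k)}-\vx^{(k-1)}\|^2$ on the right-hand side. Reindexing $j=k-1$ rewrites it as $\sum_{j=0}^{K-1}\EE\|\vx^{(j+1)}-\vx^{(j)}\|^2$, and since the initialization in Alg.~\ref{alg:async-hvb-sgm} sets $\vx^{(1)}=\vx^{(0)}$ the $j=0$ term vanishes. Hence this shifted sum is dominated by $\sum_{k=1}^K\EE\|\vx^{(k+1)}-\vx^{(k)}\|^2$, and I can move $\frac{\beta_1}{2}$ times it to the left-hand side, producing the net coefficient
\begin{equation*}
1-\gamma-\tfrac{\alpha\rho}{2\sqrt{K}}-\tfrac{\beta}{2K^{1/4}}-\tfrac{\beta}{2K^{1/4}}=1-\gamma-\tfrac{\alpha\rho}{2\sqrt{K}}-\tfrac{\beta}{K^{1/4}}.
\end{equation*}
With the choice $\gamma=\tfrac12\bigl(1-\tfrac{\alpha\rho}{2\sqrt{K}}-\tfrac{\beta}{K^{1/4}}\bigr)$, this net coefficient equals exactly $\gamma$, and the hypothesis $\frac{\beta}{K^{1/4}}<1-\frac{\alpha\rho}{2\sqrt{K}}$ guarantees $\gamma>0$ so that dividing through is legitimate.

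Finally, dividing both sides by $\gamma$ yields \eqref{eq:sum-xdiff-sq}. There is no real obstacle here beyond the index shift and the cancellation that fixes the coefficient; the reason the argument works cleanly is that $\gamma$ was chosen so that $1-\gamma-\tfrac{\alpha\rho}{2\sqrt K}-\tfrac{\beta}{K^{1/4}}=\gamma$, which is precisely the coefficient produced after the shift absorption. I would not need any probabilistic machinery beyond taking expectations, since \eqref{eq:bd-x-diff0} already handles the stochastic subgradient via the Young's inequality splitting.
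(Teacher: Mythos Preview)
Your proposal is correct and follows essentially the same approach as the paper: sum \eqref{eq:bd-x-diff0} over $k$, telescope the $\phi$-terms, use $\vx^{(0)}=\vx^{(1)}$ to absorb the shifted quadratic sum into the left-hand side, and exploit the choice of $\gamma$ so that the resulting coefficient equals $\gamma$. The paper's write-up is more terse, but the logic and the key coefficient identity $1-\gamma-\tfrac{\alpha\rho}{2\sqrt K}-\tfrac{\beta}{K^{1/4}}=\gamma$ are identical.
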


\begin{proof}
	Let $\gamma = \frac{1}{2}\big(1 -\frac{\alpha\rho}{2\sqrt{K}} - \frac{\beta}{K^{1/4}} \big)$ in \eqref{eq:bd-x-diff0} and 
	sum it up  over $k$. We have
	\begin{align*}
		&\textstyle\big( 1 -\gamma -\frac{\alpha\rho}{2\sqrt{K}} - \frac{\beta}{2K^{1/4}} \big)\sum_{k=1}^K\EE\|\vx^{(k+1)} - \vx^{(k)}\|^2 \\
\le &~\textstyle\frac{\alpha}{\sqrt K}  \EE\big(\phi(\vx^{(1)}) - \phi(\vx^{(K+1)})\big) +  \frac{\beta}{2K^{1/4}}\sum_{k=1}^K \EE\|\vx^{(k)}-\vx^{(k-1)}\|^2
		+\frac{\alpha^2M ^2}{\gamma}.
	\end{align*}
	Since $\vx^{(0)}=\vx^{(1)}$ and $\phi(\vx^{(K+1)})\ge \phi^*$, the above inequality together with the choice of $\gamma$ implies the desired result. We complete the proof.
\end{proof}

When a fixed stepsize sequence is used, we can bound $\sum_{k=1}^K\EE[\cE_k]$ as in the next lemma.

\begin{lemma}\label{lem:nsm-error-term}
	Let $\cE_k$ be defined in \eqref{eq:delay-error-term}. Given a positive integer $K$, let $\alpha_{k}=\frac{\alpha}{\sqrt{K}}, \forall\, k=1,\ldots,K$ for some $\alpha>0.$ Also, let $\beta_k=\frac{\beta}{K^{1/4}},\forall \, k$ for some nonnegative $\beta$ such that $\frac{\beta}{K^{1/4}} < 1 - \frac{\alpha\rho}{2\sqrt{K}}$. Suppose that $F(\vx)$ is upper bounded by $C_F$ for all $\vx\in\dom(r)$. Then under Assumptions~\ref{assump:weak-cvx}, \ref{assump:subgrad-bound}, and \ref{assump:rand-delay}, we have
	\begin{equation}\label{eq:sum-error-nsm}
	\begin{aligned}
	\textstyle	\sum_{k=1}^K\EE[\cE_k] \le &~ \textstyle \tau\max\big\{0, -F(\vx^{(1)})\big\} + \tau C_F + \tau^2 \big(\frac{\rho}{2} + \frac{\rho^2}{\bar\rho - \rho}\big) \left(\frac{\alpha}{\gamma\sqrt K} \big(\phi(\vx^{(1)}) - \phi^* \big) +\frac{\alpha^2M ^2}{\gamma^2}\right) \\
	&~ \textstyle + M \tau\sqrt K\sqrt{\frac{\alpha}{\gamma\sqrt K} \big(\phi(\vx^{(1)}) - \phi^* \big) +\frac{\alpha^2M ^2}{\gamma^2}},
	\end{aligned}
	\end{equation}
	where $\gamma = \frac{1}{2}\big(1 -\frac{\alpha\rho}{2\sqrt{K}} - \frac{\beta}{K^{1/4}} \big)$.
\end{lemma}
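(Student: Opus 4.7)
My plan is to decompose each $\cE_k$ into three naturally separate pieces and bound each summed contribution in turn:
\begin{align*}
\textstyle\sum_{k=1}^{K}\EE[\cE_k]
= &~\textstyle\underbrace{\sum_{k=1}^{K}\EE\big[F(\vx^{(k)})-F(\vx^{(k-\tau_k)})\big]}_{(\mathrm{I})}
+\big(\tfrac{\rho}{2}+\tfrac{\rho^{2}}{\overline\rho-\rho}\big)\underbrace{\sum_{k=1}^{K}\EE\|\vx^{(k-\tau_k)}-\vx^{(k)}\|^{2}}_{(\mathrm{II})}\\
&~\textstyle-\underbrace{\sum_{k=1}^{K}\EE\big\langle\vx^{(k)}-\vx^{(k-\tau_k)},\vv^{(k)}\big\rangle}_{(\mathrm{III})}.
\end{align*}
The three terms in \eqref{eq:sum-error-nsm} correspond exactly to bounds on (I), (II), and (III). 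Throughout I adopt the convention $\vx^{(k)}=\vx^{(1)}$ for $k\le 0$, consistent with the worker initialization in Alg.~\ref{alg:async-hvb-sgm}.

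For (I), I would use Assumption~\ref{assump:rand-delay} together with the independence of $\tau_k$ from the history. Conditioning on the iterate, $\EE[F(\vx^{(k-\tau_k)})]=\sum_{j=0}^{\tau}p_{j}\EE[F(\vx^{(k-j)})]$, and after reindexing $\ell=k-j$ the inner summation becomes $\sum_{\ell=1-j}^{K-j}\EE[F(\vx^{(\ell)})]$. Swapping the order of summation, the bulk of the sum $\sum_{\ell=1}^{K-\tau}\EE[F(\vx^{(\ell)})]$ cancels between the two halves of (I) because $\sum_{j}p_j=1$; what is left is the boundary contribution $-\bar\tau F(\vx^{(1)})$ from the ``early'' indices $\ell\le 0$ and the ``tail'' contribution $\sum_{j=0}^{\tau}p_{j}\sum_{\ell=K-j+1}^{K}\EE[F(\vx^{(\ell)})]$. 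Using $F\le C_F$ and $\bar\tau=\sum_j jp_j\le\tau$ gives the bound $(\mathrm{I})\le \tau C_F+\tau\max\{0,-F(\vx^{(1)})\}$. The careful indexing and cancellation here is the main obstacle; the rest is mechanical.

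For (II), I would telescope the displacement:
\begin{equation*}
\textstyle\|\vx^{(k)}-\vx^{(k-\tau_k)}\|^{2}=\big\|\sum_{j=k-\tau_k}^{k-1}(\vx^{(j+1)}-\vx^{(j)})\big\|^{2}\le\tau\sum_{j=k-\tau_k}^{k-1}\|\vx^{(j+1)}-\vx^{(j)}\|^{2}
\end{equation*}
by Cauchy--Schwarz and $\tau_k\le\tau$. Summing over $k$, each index $j$ is counted at most $\tau$ times, so $(\mathrm{II})\le\tau^{2}\sum_{k=1}^{K}\EE\|\vx^{(k+1)}-\vx^{(k)}\|^{2}$, and Lemma~\ref{lem:bdx-fix-alpha} finishes the bound with the chosen $\gamma$.

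For (III), Cauchy--Schwarz and $\EE\|\vv^{(k)}\|^{2}\le M^{2}$ yield $\EE|\langle\vx^{(k)}-\vx^{(k-\tau_k)},\vv^{(k)}\rangle|\le M\sqrt{\EE\|\vx^{(k)}-\vx^{(k-\tau_k)}\|^{2}}$. Summing and applying Cauchy--Schwarz in the index $k$ gives $(\mathrm{III})\le M\sqrt{K}\sqrt{\sum_{k=1}^{K}\EE\|\vx^{(k)}-\vx^{(k-\tau_k)}\|^{2}}$; feeding in the bound from (II) yields the third term of \eqref{eq:sum-error-nsm}. Combining the three pieces produces the claimed inequality.
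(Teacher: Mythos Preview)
Your proposal is correct and follows essentially the same three-piece decomposition and bounds as the paper's proof. The only cosmetic difference is in part~(III): the paper first uses the triangle inequality $\|\vx^{(k)}-\vx^{(k-\tau_k)}\|\le\sum_{j=1}^{\tau}\|\vx^{(k+1-j)}-\vx^{(k-j)}\|$ and then applies Cauchy--Schwarz in $k$ to $\sum_k\EE\|\vx^{(k)}-\vx^{(k-1)}\|$, whereas you route through the already-established bound on~(II); both paths yield the same $M\tau\sqrt{K}$ term.
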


Now from Theorem~\ref{thm:prox-sync} and Lemmas~\ref{lem:bdx-fix-alpha} and \ref{lem:nsm-error-term}, we can easily show the following convergence rate result. 

\begin{theorem}[convergence rate with fixed stepsize]\label{thm:nsm-rate-fixed-delay} Under Assumptions~\ref{assump:weak-cvx}--\ref{assump:subgrad-bound} and \ref{assump:rand-delay}, let $\overline\rho\in (\rho, 2\rho]$ and $K$ be the maximum number of iterations. Let $\{\vx^{(k)}\}$ be the sequence from Alg.~\ref{alg:async-hvb-sgm} with $\alpha_{k}=\frac{\alpha}{\sqrt{K}}$ and $\beta_k=\frac{\beta}{K^{1/4}},\forall \, k=1,\ldots,K$ for some $\alpha>0$ and nonnegative $\beta$ such that $\frac{\alpha}{\sqrt{K}}\in (0,1/\overline\rho]$ and $\frac{\beta}{K^{1/4}} < 1 - \frac{\alpha\rho}{2\sqrt{K}}$. Suppose that $F(\vx)$ is upper bounded by $C_F$ for all $\vx\in\dom(r)$. Then
	\begin{equation}\label{eq:nsm-rate-delay}
		\begin{aligned}
			\EE\big\|\nabla\phi_{1/\overline{\rho}} (\vx^{(T)} )  \big\|^2  \le &~ {\textstyle\frac{2\overline{\rho}}{(\overline{\rho}-\rho) \alpha\sqrt{K}} }
			\bigg[ \textstyle \frac{\overline{\rho}}{2}(2+\frac{2\sqrt K}{\alpha(\overline{\rho}-\rho)}) \frac{\beta^2}{\sqrt K} \Big(\frac{\alpha}{\gamma\sqrt K} \big(\phi(\vx^{(1)}) - \phi^* \big) 
			+\frac{\alpha^2M ^2}{\gamma^2}\Big) + 4\overline{\rho} M ^2\alpha^2  \\
			&  \hspace{-1.5cm}+ \textstyle \frac{\alpha\overline\rho \tau}{\sqrt K} \left(\max\big\{0, -F(\vx^{(1)})\big\} + C_F + \tau \big(\frac{\rho}{2} + \frac{\rho^2}{\bar\rho - \rho}\big) \big(\frac{\alpha}{\gamma\sqrt K} \big(\phi(\vx^{(1)}) - \phi^* \big) +\frac{\alpha^2M ^2}{\gamma^2}\big) \right)\\
			&~ \textstyle  + \phi_{1/\overline{\rho}}(\vx^{(1)})  -\phi^*  + M \alpha\overline\rho\tau \sqrt{\frac{\alpha}{\gamma\sqrt K} \big(\phi(\vx^{(1)}) - \phi^* \big) +\frac{\alpha^2M ^2}{\gamma^2}}
			\bigg],
		\end{aligned}
	\end{equation}
	where $\gamma = \frac{1}{2}\big(1 -\frac{\alpha\rho}{2\sqrt{K}} - \frac{\beta}{K^{1/4}} \big)$ and $T$ is randomly selected from $\{1,\ldots, K\}$ by \eqref{eq:def-T}.
\end{theorem}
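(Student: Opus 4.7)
The plan is to invoke the generic bound in Theorem~\ref{thm:prox-sync} with $k_0 = 1$ and then substitute the two preparatory estimates (Lemmas~\ref{lem:bdx-fix-alpha} and \ref{lem:nsm-error-term}) to eliminate the two summation terms $\sum_{k=1}^{K}\beta_k^2\EE\|\vx^{(k)}-\vx^{(k-1)}\|^2$ and $\sum_{k=1}^{K}\alpha_k\overline{\rho}(1-\alpha_k\overline{\rho})\EE[\cE_k]$ that appear on the right-hand side of \eqref{eq:nsm-general}. Since the hypotheses of Theorem~\ref{thm:prox-sync} (weak convexity, unbiased bounded stochastic subgradients, and $\alpha_k\in(0,1/\overline{\rho}]$) are all assumed here, and the stepsize/inertial sequences in this theorem are exactly the fixed-value sequences required by Lemmas~\ref{lem:bdx-fix-alpha} and \ref{lem:nsm-error-term}, these three results can be chained together without extra work.

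First I would specialize the generic bound: under $\alpha_k = \alpha/\sqrt{K}$ and $\beta_k = \beta/K^{1/4}$, the denominator $(\overline{\rho}-\rho)\sum_{k=1}^{K}\alpha_k$ equals $(\overline{\rho}-\rho)\alpha\sqrt{K}$, the term $4\overline{\rho}M^2\sum_{k=1}^{K}\alpha_k^2$ collapses to $4\overline{\rho}M^2\alpha^2$, and the coefficient $(2 + 2/(\alpha_k(\overline{\rho}-\rho)))\beta_k^2$ becomes $(2 + 2\sqrt{K}/(\alpha(\overline{\rho}-\rho)))\beta^2/\sqrt{K}$. The initial Moreau-envelope value $\EE[\phi_{1/\overline{\rho}}(\vx^{(1)})-\phi^*]$ is just the deterministic scalar $\phi_{1/\overline{\rho}}(\vx^{(1)})-\phi^*$, since $\vx^{(1)}=\vx^{(0)}$ is the initial point.

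Next I would plug in the two summation bounds. Lemma~\ref{lem:bdx-fix-alpha} directly supplies $\sum_{k=1}^{K}\EE\|\vx^{(k+1)}-\vx^{(k)}\|^2 \le \frac{\alpha}{\gamma\sqrt{K}}\big(\phi(\vx^{(1)})-\phi^*\big)+\frac{\alpha^2 M^2}{\gamma^2}$, which (noting $\vx^{(0)}=\vx^{(1)}$ so the shift of index costs nothing) I would use for the $\sum\beta_k^2\EE\|\vx^{(k)}-\vx^{(k-1)}\|^2$ term after factoring out the common coefficient. For the delay term, I would use $\alpha_k\overline{\rho}(1-\alpha_k\overline{\rho}) \le \alpha\overline{\rho}/\sqrt{K}$ to pull a uniform constant out of $\sum_k\alpha_k\overline{\rho}(1-\alpha_k\overline{\rho})\EE[\cE_k]$ and then apply Lemma~\ref{lem:nsm-error-term} to replace $\sum_{k=1}^{K}\EE[\cE_k]$ by the explicit expression on the right of \eqref{eq:sum-error-nsm}; this produces exactly the bracketed line in \eqref{eq:nsm-rate-delay} containing the $\tau$-dependent quantities $\max\{0,-F(\vx^{(1)})\}$, $C_F$, and the factor $(\rho/2 + \rho^2/(\overline{\rho}-\rho))$, together with the square-root term with $M\alpha\overline{\rho}\tau$.

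Finally, collecting all four contributions inside the bracket and keeping the prefactor $2\overline{\rho}/((\overline{\rho}-\rho)\alpha\sqrt{K})$ from Theorem~\ref{thm:prox-sync} yields exactly the right-hand side of \eqref{eq:nsm-rate-delay}, with $T$ sampled by \eqref{eq:def-T}. This proof is essentially a substitution/bookkeeping argument; the only step that needs mild care is aligning the index range of the iterate-difference sum (Lemma~\ref{lem:bdx-fix-alpha} gives $k=1,\ldots,K$ for $\|\vx^{(k+1)}-\vx^{(k)}\|^2$, while the inertial term in Theorem~\ref{thm:prox-sync} involves $\|\vx^{(k)}-\vx^{(k-1)}\|^2$), which is harmless thanks to the initialization $\vx^{(1)}=\vx^{(0)}$. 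The main obstacle is therefore not any analytic subtlety but keeping the constants straight so that the final display matches \eqref{eq:nsm-rate-delay} verbatim.
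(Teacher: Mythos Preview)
Your proposal is correct and mirrors the paper's own proof essentially step for step: the paper also sets $k_0=1$ in Theorem~\ref{thm:prox-sync}, uses $\sum_{k=1}^K\alpha_k=\alpha\sqrt{K}$ and $\sum_{k=1}^K\alpha_k^2=\alpha^2$, substitutes the bounds \eqref{eq:sum-xdiff-sq} and \eqref{eq:sum-error-nsm} from Lemmas~\ref{lem:bdx-fix-alpha} and~\ref{lem:nsm-error-term}, and invokes $1-\alpha\overline\rho/\sqrt{K}\le 1$. The only minor point of care is the order in which you simplify the delay term: since the stepsize is constant, the coefficient $\alpha_k\overline\rho(1-\alpha_k\overline\rho)$ factors out exactly, and the inequality $1-\alpha\overline\rho/\sqrt{K}\le 1$ should be applied after replacing $\sum_k\EE[\cE_k]$ by the (nonnegative) right-hand side of \eqref{eq:sum-error-nsm}, not before, because individual $\EE[\cE_k]$ need not be nonnegative.
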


\begin{proof}
	Notice $\sum_{k=1}^K\alpha_k = \alpha\sqrt K$ and $\sum_{k=1}^K\alpha_k^2 = \alpha^2$. Then the inequality in \eqref{eq:nsm-rate-delay} directly follows by substituting \eqref{eq:sum-xdiff-sq} and \eqref{eq:sum-error-nsm} into \eqref{eq:nsm-general} with $k_0=1$, and also noticing $1-\frac{\alpha\overline\rho}{\sqrt K} \le 1$. 
\end{proof}

\begin{remark}
	The result in \eqref{eq:nsm-rate-delay} indicates a convergence rate of $O(1/\sqrt K)$. For the no-delay case (i.e., $\tau=0$), the assumption $F(\vx) \le C_F, \forall\, \vx\in\dom(r)$ is not needed. The delay case has the same-order convergence as 
	the no-delay case. However, their constants are different. Compared to the no-delay case, the delay one has a few additional terms dependent on $\tau$. The term dependent on $\tau$ in the second line on the right-hand side of \eqref{eq:nsm-rate-delay} is negligible if $K$ is a large number, but the term in the third line will not vanish as $K\to\infty$. In other words, the delay always has a non-negligible effect on the convergence rate. To take a clearer look at the effect, let $\overline\rho=2\rho$, $\beta=0$, and $K\to\infty$. Then $\gamma\to \frac{1}{2}$, and the terms enclosed in the big square brackets of \eqref{eq:nsm-rate-delay} roughly equal $\phi_{1/\overline{\rho}}(\vx^{(1)})  -\phi^* + 8\rho \alpha^2M ^2+4\rho\alpha^2M^2\tau$. Hence, the delay can slow down the convergence rate by $\frac{\tau}{\tau + 2 + (\phi_{1/\overline{\rho}}(\vx^{(1)})  -\phi^*)/(4\rho\alpha^2M^2)}$. This indicates that the delay will have a smaller effect if $\rho$ is smaller (i.e., $F$ is closer to convexity) or if $\alpha$ is smaller (i.e., a smaller learning rate is used). 
\end{remark}

\subsubsection{Convergence rate with varying stepsizes}

When $\alpha_k$ varies with $k$, $\sum_k\alpha_{k} \big(\phi(\vx^{(k)}) - \phi(\vx^{(k+1)}) \big)$ may not be a telescoping series any more, so we cannot directly obtain a bound as in \eqref{eq:sum-xdiff-sq} by summing up \eqref{eq:bd-x-diff0}.  Below we make an additional assumption and show a bound on  $\sum_{k=1}^K\|\vx^{(k+1)} - \vx^{(k)}\|^2$ when $\alpha_k = \alpha/\sqrt{k}$ for all $k\ge1$.

\begin{assumption} \label{assump:subgrad-bound2}
	At least one of the following conditions holds.
	\begin{enumerate}[leftmargin=1.2cm]
		\item\label{subgrad-bnd1} 
		$\phi$ is bounded on $\dom(r)$, i.e., there is $C_\phi$ such that $|\phi(\vx)| \le C_\phi,\forall\, \vx\in \dom(r)$.
		\item\label{subgrad-bnd2} 
		The function $r$ has the form of $r=r_1+r_2$,  
		where $r_1$ is the indicator function of a closed convex set $X\subseteq \RR^n$, and  $r_2$ is convex. In addition, there is $M_r\ge 0$ such that   $  \| \vv \| \le M_r$ for all $\vx\in X$ and all $\vv\in\partial r_2(\vx)$. 
	\end{enumerate}	
\end{assumption}
In condition~\ref{subgrad-bnd1} of Assumption~\ref{assump:subgrad-bound2}, the boundedness of $\phi$ can be guaranteed if $\phi$ is continuous and $\dom(r)$ is compact. The second condition trivially holds if $r_2\equiv 0$, and it also holds if $X=\RR^n$ and $r_2$ is a Lipschitz continuous function such as a certain norm. 

\begin{lemma}
	Under Assumptions~\ref{assump:weak-cvx} and  \ref{assump:subgrad-bound}, let $\{\alpha_{k}\}$ be a positive nonincreasing sequence and $\alpha_1 < \frac {2} {\rho}$. Also, let $\beta_k\le \tilde\beta, \forall \,k\ge1$ for some $\tilde\beta$ such that $0\le\tilde\beta < 1 - \frac{\alpha_1\rho}{2}$. Then if the first condition in Assumption~\ref{assump:subgrad-bound2} holds, we have for any positive integer $K$,
	\begin{equation}\label{eq:sum-xdiff-sq-vary}
	\textstyle	\sum_{k=1}^K \|\vx^{(k+1)} - \vx^{(k)}\|^2 \le \frac{2\alpha_1C_\phi}{\gamma} + \sum_{k=1}^K\frac{\alpha_k^2 M^2}{\gamma^2},~\text{where}~\gamma = \frac{1}{2}\big({\textstyle 1 -\frac{\alpha_1\rho}{2} - \tilde\beta} \big).
	\end{equation}
\end{lemma}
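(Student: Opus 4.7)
The plan is to sum the per-iteration descent inequality \eqref{eq:bd-x-diff0} over $k=1,\ldots,K$ and manipulate the result so that all $k$-dependent constants are controlled uniformly. To that end, I would fix $\gamma = \frac{1}{2}\big(1 - \alpha_1\rho/2 - \tilde\beta\big)$, which is strictly positive under the assumed bounds $\alpha_1 < 2/\rho$ and $\tilde\beta < 1 - \alpha_1\rho/2$. Applying \eqref{eq:bd-x-diff0} with this choice of $\gamma$, and using the hypotheses $\alpha_k \le \alpha_1$ and $\beta_k \le \tilde\beta$, the coefficient on the left side of \eqref{eq:bd-x-diff0} is bounded below by
\[
1 - \gamma - \frac{\alpha_k \rho}{2} - \frac{\beta_k}{2} \;\ge\; 1 - \gamma - \frac{\alpha_1 \rho}{2} - \frac{\tilde\beta}{2} \;=\; \gamma + \frac{\tilde\beta}{2}.
\]

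Next, I would sum \eqref{eq:bd-x-diff0} over $k=1,\ldots,K$. The initialization $\vx^{(0)} = \vx^{(1)}$ gives $\|\vx^{(1)} - \vx^{(0)}\|^2 = 0$, so after re-indexing we obtain $\sum_{k=1}^K \|\vx^{(k)} - \vx^{(k-1)}\|^2 \le \sum_{k=1}^K \|\vx^{(k+1)} - \vx^{(k)}\|^2$. Hence $\sum_{k=1}^K \tfrac{\beta_k}{2}\|\vx^{(k)} - \vx^{(k-1)}\|^2 \le \tfrac{\tilde\beta}{2} \sum_{k=1}^K \|\vx^{(k+1)} - \vx^{(k)}\|^2$. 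Moving this contribution to the left side absorbs one copy of $\tilde\beta/2$, and the effective coefficient in front of $\sum_{k=1}^K \|\vx^{(k+1)} - \vx^{(k)}\|^2$ becomes exactly $\gamma$.

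The main step is handling the non-telescoping sum $S_K := \sum_{k=1}^K \alpha_k\big(\phi(\vx^{(k)}) - \phi(\vx^{(k+1)})\big)$ which arises since $\alpha_k$ varies with $k$. I would apply Abel's summation formula to rewrite it as
\[
S_K = \alpha_1 \phi(\vx^{(1)}) + \sum_{k=2}^K (\alpha_k - \alpha_{k-1})\,\phi(\vx^{(k)}) - \alpha_K \phi(\vx^{(K+1)}).
\]
Because $\{\alpha_k\}$ is nonincreasing, $\alpha_k - \alpha_{k-1} \le 0$, and $(\alpha_k - \alpha_{k-1})\phi(\vx^{(k)}) \le (\alpha_{k-1}-\alpha_k)C_\phi$ by the two-sided bound $|\phi| \le C_\phi$ on $\dom(r)$ from condition~\ref{subgrad-bnd1} of Assumption~\ref{assump:subgrad-bound2}. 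The sum $\sum_{k=2}^K (\alpha_{k-1}-\alpha_k)$ telescopes to $\alpha_1 - \alpha_K$, so
\[
S_K \le \alpha_1 C_\phi + (\alpha_1 - \alpha_K)\, C_\phi + \alpha_K C_\phi = 2\alpha_1 C_\phi.
\]
Dividing the absorbed summed inequality by $\gamma$ then yields the claimed bound \eqref{eq:sum-xdiff-sq-vary}.

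The main obstacle is the careful sign tracking in the Abel summation step: both an upper bound and a lower bound on $\phi$ are essential, which is precisely why the boundedness condition $|\phi(\vx)| \le C_\phi$ (rather than mere lower-boundedness, as for the fixed-stepsize case) is needed when $\alpha_k$ is allowed to decay. The rest is routine bookkeeping once the coefficient $\gamma + \tilde\beta/2$ has been identified.
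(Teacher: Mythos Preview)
Your proposal is correct and follows essentially the same approach as the paper: fix $\gamma = \tfrac{1}{2}(1 - \alpha_1\rho/2 - \tilde\beta)$, sum \eqref{eq:bd-x-diff0}, use Abel summation together with the two-sided bound $|\phi|\le C_\phi$ to control the non-telescoping sum by $2\alpha_1 C_\phi$, and absorb the $\tfrac{\beta_k}{2}\|\vx^{(k)}-\vx^{(k-1)}\|^2$ terms into the left side via $\beta_k\le\tilde\beta$ and the index shift. Your explicit identification of the intermediate coefficient $\gamma + \tilde\beta/2$ makes the absorption step clearer than the paper's terse justification ``by $\gamma \le 1 - \gamma - \alpha_k\rho/2 - \beta_k$,'' but the argument is the same.
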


\begin{proof}
	When condition~\ref{subgrad-bnd1} of Assumption~\ref{assump:subgrad-bound2} holds, i.e., $|\phi(\vx)| \le C_\phi,\forall\, \vx\in \dom(r)$, we have from the nonincreasing monotonicity of $\alpha_k$ that
	\begin{equation}\label{eq:bd-sum-phi}
	\begin{aligned}
	\textstyle	\sum_{k=1}^K\alpha_{k} \big(\phi(\vx^{(k)}) - \phi(\vx^{(k+1)}) \big) = &~\textstyle\alpha_1\phi(\vx^{(1)}) + \sum_{k=2}^K(\alpha_k-\alpha_{k-1}) \phi(\vx^{(k)}) - \alpha_K\phi(\vx^{(K+1)})\\
		\le &~\textstyle \alpha_1C_\phi - \sum_{k=2}^K(\alpha_k-\alpha_{k-1}) C_\phi + \alpha_K C_\phi = 2\alpha_1C_\phi.
	\end{aligned}
	\end{equation}
	Hence, let $\gamma = \frac{1}{2}\big(1 -\frac{\alpha_1\rho}{2} - \tilde\beta \big)$ in \eqref{eq:bd-x-diff0} and 
	sum it up  over $k$. We have by $\gamma \le 1 -\gamma -\frac{\alpha_k\rho}{2} - \beta_k,\forall\, k\ge1$ that
	$$\textstyle\gamma\sum_{k=1}^K \|\vx^{(k+1)} - \vx^{(k)}\|^2 \le 2\alpha_1C_\phi + \sum_{k=1}^K\frac{\alpha_k^2 M^2}{\gamma},$$
	which apparently implies the desired result.
\end{proof}

\begin{lemma}\label{lem:neighbor}	
	Suppose that Assumption~\ref{assump:subgrad-bound} and condition~\ref{subgrad-bnd2} of Assumption~\ref{assump:subgrad-bound2} hold. Let $\{\vx^{(k)}\}$ be the sequence from Alg.~\ref{alg:async-hvb-sgm} with a stepsize sequence $\{\alpha_k\}$ and inertial parameter $\{\beta_k\}$ such that $\beta_k\le \tilde\beta < 1$. Then for any positive integer $K$,
	\begin{equation}\label{eq:ex-grad-bound2} 
	\textstyle	\sum_{k=1}^K\EE\|\vx^{(k+1)} -\vx^{(k)} \|^2 
		\le ( M_r^2+M ^2)\frac{4(1+\tilde\beta^2)}{(1-\tilde\beta^2)^2}\sum_{k=1}^{K}\alpha_{k}^2.
	\end{equation}	
\end{lemma}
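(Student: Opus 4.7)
The plan is to derive a deterministic recursion $\|\vx^{(k+1)}-\vx^{(k)}\| \le \alpha_k(M_r + \|\vg^{(k)}\|) + \beta_k \|\vx^{(k)}-\vx^{(k-1)}\|$ from the optimality condition of the prox step, then convert it to a contractive square recursion and sum.

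First, I would exploit the structure $r=r_1+r_2$ with $r_1=\iota_X$. The optimality condition for \eqref{eq:update-x-2} yields the existence of $\vu^{(k+1)}\in N_X(\vx^{(k+1)})$ and $\vw^{(k+1)}\in\partial r_2(\vx^{(k+1)})$ such that
\[
\vzero = \alpha_k\vu^{(k+1)} + \alpha_k\vw^{(k+1)} + \vx^{(k+1)}-\vx^{(k)} + \alpha_k\vg^{(k)} - \beta_k(\vx^{(k)}-\vx^{(k-1)}).
\]
Since $\vx^{(k+1)},\vx^{(k)}\in X$ (the prox of $r$ forces the iterates into $X$), the normal cone inequality gives $\langle \vu^{(k+1)}, \vx^{(k)}-\vx^{(k+1)}\rangle \le 0$. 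Taking the inner product of the identity above with $\vx^{(k+1)}-\vx^{(k)}$ and discarding the resulting nonnegative term $\alpha_k\langle \vu^{(k+1)},\vx^{(k+1)}-\vx^{(k)}\rangle$, Cauchy--Schwarz together with $\|\vw^{(k+1)}\|\le M_r$ yields
\[
\|\vx^{(k+1)}-\vx^{(k)}\|^2 \le \|\vx^{(k+1)}-\vx^{(k)}\| \cdot \bigl( \alpha_k M_r + \alpha_k\|\vg^{(k)}\| + \beta_k\|\vx^{(k)}-\vx^{(k-1)}\| \bigr),
\]
hence
\begin{equation}\label{eq:plan-linear-recur}
\|\vx^{(k+1)}-\vx^{(k)}\| \le \alpha_k(M_r + \|\vg^{(k)}\|) + \beta_k\|\vx^{(k)}-\vx^{(k-1)}\|.
\end{equation}

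Next, denote $a_k:=\|\vx^{(k+1)}-\vx^{(k)}\|$ and $b_k:=\alpha_k(M_r+\|\vg^{(k)}\|)$, so $a_k \le b_k + \tilde\beta\, a_{k-1}$. Squaring and applying Young's inequality with parameter $\epsilon = \frac{2\tilde\beta^2}{1-\tilde\beta^2}$ gives
\[
a_k^2 \le \frac{1+\tilde\beta^2}{1-\tilde\beta^2}\, b_k^2 + \frac{1+\tilde\beta^2}{2}\, a_{k-1}^2.
\]
Since $\frac{1+\tilde\beta^2}{2}<1$, unrolling this with the initial condition $a_0=\|\vx^{(1)}-\vx^{(0)}\|=0$ and summing a geometric series yields
\[
\sum_{k=1}^K a_k^2 \le \frac{(1+\tilde\beta^2)/(1-\tilde\beta^2)}{1-(1+\tilde\beta^2)/2}\sum_{k=1}^K b_k^2 = \frac{2(1+\tilde\beta^2)}{(1-\tilde\beta^2)^2}\sum_{k=1}^K b_k^2.
\]

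Finally I would take expectation. Using $(M_r+\|\vg^{(k)}\|)^2\le 2M_r^2+2\|\vg^{(k)}\|^2$ and Assumption~\ref{assump:subgrad-bound} gives $\EE b_k^2 \le 2\alpha_k^2(M_r^2+M^2)$, which yields exactly the bound \eqref{eq:ex-grad-bound2}.

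I expect the only delicate step to be cleanly justifying \eqref{eq:plan-linear-recur} by correctly handling the normal-cone/subdifferential split of $\partial r$ and verifying iterate feasibility $\vx^{(k)}\in X$; after that, everything reduces to a standard contractive recursion estimate, so the analysis is essentially deterministic up to the final expectation step.
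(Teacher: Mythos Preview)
Your proposal is correct and follows essentially the same route as the paper's proof: both derive the one-step bound from the optimality condition of the prox (the paper phrases it as a variational inequality over $X$ rather than splitting $\partial r$ into normal cone plus $\partial r_2$, but this is equivalent), apply Young's inequality with exactly the same parameter to obtain the square recursion with contraction factor $\tfrac{1+\tilde\beta^2}{2}$, and then sum using $\vx^{(0)}=\vx^{(1)}$. The only cosmetic difference is that the paper takes expectation immediately after squaring, whereas you carry the deterministic recursion to the end and take expectation only when bounding $\EE b_k^2$; both orders are fine.
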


We still need to bound $\sum_{k=k_0}^{K} \alpha_k \overline{\rho} (1-\alpha_{k}\overline{\rho})\EE[\cE_k]$ in \eqref{eq:nsm-general}.

\begin{lemma}\label{lem:nsm-error-term-vary}
	Under Assumptions~\ref{assump:weak-cvx}--\ref{assump:subgrad-bound2}, let $\overline \rho \in (\rho, 2\rho]$ and $\alpha_{k}=\frac{\alpha}{\sqrt{k}}, \forall\, k\ge1$ for some $0<\alpha \le 1/\overline\rho$. Also, let $\beta_k=\min\big\{\tilde\beta, \frac{\beta}{k^{1/4}}\big\},\forall \, k,$ for some $\tilde\beta$ such that $0\le\tilde\beta < 1 - \frac{\alpha\rho}{2}$. Furthermore, assume $|F(\vx)| \le C_F, \forall\, \vx\in\dom(r)$. Then for any integer $K$ and $1 \le k_0 \le K$, it holds
\begin{align}\label{eq:bound-nsm-error-term-vary}
\textstyle \sum_{k=k_0}^{K} \alpha_k \overline{\rho} (1-\alpha_{k}\overline{\rho})\EE[\cE_k] \le & ~ \textstyle 2\alpha_{k_0} \overline{\rho} \tau C_F + \alpha_{k_0} \tau^2 \overline{\rho} \big(\frac{\rho}{2} + \frac{\rho^2}{\overline\rho - \rho}\big)\big(C_1 + C_2\alpha^2(1+\ln K)\big)	\\
& ~ \textstyle + M \tau \overline{\rho} \sqrt{\sum_{k=k_0}^K \alpha_k^2} \sqrt{C_1 + C_2\alpha^2(1+\ln K)}, \nonumber
\end{align}
where $\cE_k$ is defined in \eqref{eq:delay-error-term}.	
\end{lemma}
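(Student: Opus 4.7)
The plan is to split $\cE_k$ in \eqref{eq:delay-error-term} into its three constituent pieces and bound each in turn. After using $1-\alpha_k\overline{\rho}\le 1$ and the non-increasing behavior of $\alpha_k$, the task reduces to controlling (i) $\sum_{k=k_0}^K \alpha_k \EE[F(\vx^{(k)}) - F(\vx^{(k-\tau_k)})]$, (ii) $\sum_{k=k_0}^K \alpha_k \EE\|\vx^{(k)} - \vx^{(k-\tau_k)}\|^2$, and (iii) $\sum_{k=k_0}^K \alpha_k \EE|\langle \vx^{(k)} - \vx^{(k-\tau_k)}, \vv^{(k)}\rangle|$ by quantities matching the three summands on the right-hand side of \eqref{eq:bound-nsm-error-term-vary}.

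For piece (i), I would invoke Assumption~\ref{assump:rand-delay}: since $\tau_k$ has a fixed distribution $\{p_j\}_{j=0}^{\tau}$ and is independent of the sample randomness $\{\xi_j\}$, one has $\EE[F(\vx^{(k-\tau_k)})] = \sum_{j=0}^\tau p_j\, \EE[F(\vx^{(k-j)})]$. Interchanging the order of summation, the $j$-th contribution becomes $\sum_{k=k_0}^K\alpha_k \EE[F(\vx^{(k)})] - \sum_{k=k_0-j}^{K-j}\alpha_{k+j}\EE[F(\vx^{(k)})]$; the interior telescopes out and only two boundary sums remain, whose total simplifies to $2\sum_{k=k_0}^{k_0+j-1}\alpha_k \le 2j\alpha_{k_0}$ using the monotonicity of $\alpha_k$. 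Combining with $\sum_{j=0}^\tau p_j\, j \le \tau$, $|F(\vx)|\le C_F$, and the factor $\overline{\rho}(1-\alpha_k\overline{\rho})\le \overline{\rho}$ yields the desired $2\alpha_{k_0}\overline{\rho}\tau C_F$ contribution.

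For pieces (ii) and (iii), the key identity is $\vx^{(k)} - \vx^{(k-\tau_k)} = \sum_{j=k-\tau_k}^{k-1}(\vx^{(j+1)} - \vx^{(j)})$, which together with Cauchy--Schwarz gives $\|\vx^{(k)} - \vx^{(k-\tau_k)}\|^2 \le \tau_k\sum_{j=k-\tau_k}^{k-1}\|\vx^{(j+1)}-\vx^{(j)}\|^2 \le \tau\sum_{j=k-\tau}^{k-1}\|\vx^{(j+1)}-\vx^{(j)}\|^2$. Summing over $k$, each squared increment is counted at most $\tau$ times, so $\sum_{k=k_0}^K \EE\|\vx^{(k)}-\vx^{(k-\tau_k)}\|^2 \le \tau^2\sum_{j=1}^K\EE\|\vx^{(j+1)}-\vx^{(j)}\|^2$. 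The latter is then bounded by $C_1+C_2\alpha^2(1+\ln K)$ via \eqref{eq:sum-xdiff-sq-vary} under condition 1 (with $C_1=2\alpha C_\phi/\gamma$, $C_2=M^2/\gamma^2$) or via \eqref{eq:ex-grad-bound2} under condition 2 of Assumption~\ref{assump:subgrad-bound2}, combined with $\sum_{k=1}^K\alpha_k^2 \le \alpha^2(1+\ln K)$ for $\alpha_k=\alpha/\sqrt{k}$. Piece (ii) follows by extracting $\alpha_k \le \alpha_{k_0}$ and attaching the factor $\overline{\rho}(\rho/2 + \rho^2/(\overline{\rho}-\rho))$. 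For piece (iii), the uniform bound $\|\vv^{(k)}\|\le M$ from \eqref{eq:subgrad-2ndmoment} and another Cauchy--Schwarz on the $\alpha_k$-weighted sum give $\sum_{k=k_0}^K\alpha_k\EE\|\vx^{(k)}-\vx^{(k-\tau_k)}\| \le \sqrt{\sum_{k=k_0}^K\alpha_k^2}\cdot\sqrt{\tau^2(C_1+C_2\alpha^2(1+\ln K))}$, matching the third summand.

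The main obstacle will be piece (i): a naive deterministic bound $|F(\vx^{(k)}) - F(\vx^{(k-\tau_k)})|\le 2C_F$ leaves behind a factor $\sum_{k=k_0}^K\alpha_k = \Theta(\sqrt{K})$, which would destroy the $O(1/\sqrt{K})$ target rate. It is precisely the telescoping across iterations enabled by the stochastic-delay structure that collapses this to the much smaller $\alpha_{k_0}$ prefactor. A minor housekeeping point is that the boundary cancellation presumes $K \ge k_0 + \tau$, but this is harmless since Theorem~\ref{thm:prox-sync} will ultimately be applied with $k_0$ at most a constant fraction of $K$.
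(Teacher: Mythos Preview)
Your proposal is correct and follows essentially the same three-piece decomposition as the paper's proof: bound the $F$-difference sum via the stochastic-delay distribution, bound the squared-displacement sum via \eqref{eq:sum-xdiff-sq-vary}/\eqref{eq:ex-grad-bound2} after extracting $\alpha_k\le\alpha_{k_0}$, and handle the cross term by Cauchy--Schwarz. The only notable variation is in piece (i): the paper shifts by $C_F$ and uses the sign $F(\vx^{(k)})-C_F\le 0$ together with the nonnegativity of $\alpha_k-\sum_t\alpha_t p_{t-k}$ to drop all interior terms outright, whereas you instead telescope the coefficient differences $\alpha_k-\alpha_{k+j}$ and bound each piece by $C_F$ in absolute value --- both routes land on the identical $2\alpha_{k_0}\overline{\rho}\tau C_F$ term.
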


Now we are ready to show the convergence rate result for the case with varying stepsize. 

\begin{theorem}[convergence rate with varying stepsize]\label{thm:rate-nsm-vary-delay}
Under the same assumptions of Lemma~\ref{lem:nsm-error-term-vary}, let $\{\vx^{(k)}\}$ be the sequence from Alg.~\ref{alg:async-hvb-sgm}. We have	
	\begin{equation} \label{eq:nsm-vary-alpha-delay}
\begin{aligned}
    \textstyle \EE\big\|\nabla\phi_{1/\overline{\rho}} (\vx^{(T)} )  \big\|^2 \le & ~\textstyle\frac{\overline{\rho}}{(\overline{\rho}-\rho)\alpha(\sqrt{K+1}-\sqrt{k_0})} 
	\Big[\EE\big[\phi_{1/\overline{\rho}}(\vx^{(k_0)})  -\phi^*\big]  + 4\overline{\rho} M ^2\alpha^2(1+\ln K - \ln k_0) \\
	& ~~~~+ \textstyle \frac{\overline\rho}{2}\big(2\tilde\beta^2 +  \frac{2\beta^2}{\alpha(\overline{\rho}-\rho)} \big)\big(C_1 + C_2\alpha^2(1+\ln K)\big) \\
	& ~~~~+ \textstyle 2\frac{\alpha}{\sqrt{k_0}} \overline{\rho} \tau C_F + \frac{\alpha}{\sqrt{k_0}} \tau^2 \overline{\rho} \big(\frac{\rho}{2} + \frac{\rho^2}{\overline\rho - \rho}\big)\big(C_1 + C_2\alpha^2(1+\ln K)\big)	\\
& ~~~~ \textstyle + \alpha M \tau \overline{\rho} \sqrt{1+\ln K-\ln k_0} \sqrt{C_1 + C_2\alpha^2(1+\ln K)}	\Big],
\end{aligned}	    
	\end{equation}
	where $T$ is randomly selected from $\{k_0,\ldots,K\}$ by \eqref{eq:def-T} and 		
	\begin{subequations}\label{eq:def_C12}
		\begin{align}
			&\textstyle C_1 = \frac{4\alpha C_\phi}{1-\frac{\alpha\rho}{2}-\tilde\beta}, \quad C_2 = \frac{4M^2}{(1-\frac{\alpha\rho}{2}-\tilde\beta)^2}, \text{ if condition~\ref{subgrad-bnd1} of Assumption~\ref{assump:subgrad-bound2} holds; or},\\
			&\textstyle C_1 = 0, \quad C_2 = ( M_r^2+M ^2)\frac{4(1+\tilde\beta^2)}{(1-\tilde\beta^2)^2}, \text{ if condition~\ref{subgrad-bnd2} of Assumption~\ref{assump:subgrad-bound2} holds}.
		\end{align}
	\end{subequations}
\end{theorem}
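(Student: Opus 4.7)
The plan is to assemble the result by substituting the specific stepsize and inertial parameter choices into the generic bound from Theorem~\ref{thm:prox-sync} and then invoking the two preparatory lemmas to control the remaining summations.

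First I would start from \eqref{eq:nsm-general} with $\alpha_k=\alpha/\sqrt{k}$ and obtain explicit estimates of the summations on the right-hand side. For the denominator of the leading factor, I would use the standard integral comparison
\[
\textstyle \sum_{k=k_0}^{K}\alpha_k = \alpha\sum_{k=k_0}^{K}\frac{1}{\sqrt{k}} \ge \alpha\int_{k_0}^{K+1}\frac{dx}{\sqrt{x}} = 2\alpha(\sqrt{K+1}-\sqrt{k_0}),
\]
which will turn the prefactor $\frac{2\overline\rho}{(\overline\rho-\rho)\sum \alpha_k}$ into $\frac{\overline\rho}{(\overline\rho-\rho)\alpha(\sqrt{K+1}-\sqrt{k_0})}$ and match the claimed rate. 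For the $4\overline\rho M^2\sum \alpha_k^2$ term I would use $\sum_{k=k_0}^K 1/k \le 1/k_0 + \ln(K/k_0) \le 1+\ln K-\ln k_0$.

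The key step is bounding the momentum-correction sum
\[
\textstyle \frac{\overline\rho}{2}\sum_{k=k_0}^K\bigl(2+\frac{2}{\alpha_k(\overline\rho-\rho)}\bigr)\beta_k^2\, \EE\|\vx^{(k)}-\vx^{(k-1)}\|^2.
\]
Here the definition $\beta_k = \min\{\tilde\beta,\beta/k^{1/4}\}$ is crucial: for the first piece I would use $\beta_k^2 \le \tilde\beta^2$, while for the second I would use $\beta_k^2 \le \beta^2/\sqrt{k}$, which cancels against $\alpha_k=\alpha/\sqrt k$ in the denominator to give $\beta_k^2/\alpha_k \le \beta^2/\alpha$. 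Pulling the resulting $k$-independent coefficient $2\tilde\beta^2 + \frac{2\beta^2}{\alpha(\overline\rho-\rho)}$ out of the sum reduces the estimate to controlling $\sum_{k=k_0}^K \EE\|\vx^{(k)}-\vx^{(k-1)}\|^2$, which I would bound by $C_1 + C_2 \alpha^2(1+\ln K)$ via \eqref{eq:sum-xdiff-sq-vary} under condition~\ref{subgrad-bnd1} of Assumption~\ref{assump:subgrad-bound2} (giving the first choice of $(C_1,C_2)$) or via \eqref{eq:ex-grad-bound2} under condition~\ref{subgrad-bnd2} (giving the second choice, with $C_1=0$).

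For the delay term $\sum_{k=k_0}^{K}\alpha_k\overline\rho(1-\alpha_k\overline\rho)\EE[\cE_k]$ I would directly invoke Lemma~\ref{lem:nsm-error-term-vary}, whose bound \eqref{eq:bound-nsm-error-term-vary} is already written in terms of $\alpha_{k_0}=\alpha/\sqrt{k_0}$, $\sum_{k=k_0}^K\alpha_k^2\le\alpha^2(1+\ln K-\ln k_0)$ and $C_1+C_2\alpha^2(1+\ln K)$. Collecting all four contributions inside the square brackets of \eqref{eq:nsm-general} and dividing by $2\alpha(\sqrt{K+1}-\sqrt{k_0})$ yields \eqref{eq:nsm-vary-alpha-delay}. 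The main obstacle is purely bookkeeping: one must be careful to pair the right version of $\beta_k^2$ with $\alpha_k$ so that the coefficient becomes $k$-independent, and to keep the $\ln K$ factors tracked so that the numerator matches the stated constants; once that pairing is done, the proof reduces to direct substitution.
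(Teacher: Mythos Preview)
Your proposal is correct and follows essentially the same approach as the paper's proof: the paper also bounds $(2+\tfrac{2}{\alpha_k(\overline\rho-\rho)})\beta_k^2$ by the $k$-independent constant $2\tilde\beta^2+\tfrac{2\beta^2}{\alpha(\overline\rho-\rho)}$ via the same pairing $\beta_k^2\le\tilde\beta^2$ and $\beta_k^2/\alpha_k\le\beta^2/\alpha$, then appeals to \eqref{eq:sum-xdiff-sq-vary} or \eqref{eq:ex-grad-bound2} for $\sum_k\EE\|\vx^{(k)}-\vx^{(k-1)}\|^2$, invokes Lemma~\ref{lem:nsm-error-term-vary} for the delay term, and uses the identical integral comparisons for $\sum\alpha_k$ and $\sum\alpha_k^2$ before substituting everything into \eqref{eq:nsm-general}.
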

\begin{proof}
	By the choice of $\{\alpha_k\}$  
	and $\{\beta_k\}$, we have
	$$\textstyle\sum_{k=k_0}^{K}( 2+{  \frac{2}{\alpha_{k}(\overline{\rho}-\rho)} }) \beta_k^2 \EE\|\vx^{(k)}-\vx^{(k-1)}\|^2 \le (2\tilde\beta^2 + {  \frac{2\beta^2}{\alpha(\overline{\rho}-\rho)} }) \sum_{k=k_0}^{K}\EE\|\vx^{(k)}-\vx^{(k-1)}\|^2,$$
	which, together with \eqref{eq:sum-xdiff-sq-vary} and \eqref{eq:ex-grad-bound2} and also Lemma~\ref{lem:sum-alpha-bnd} with $a=1$, gives
	\begin{align}\label{eq:bd-sum-xdiff-C12-k0}
	\textstyle	\frac{\overline\rho}{2}\sum_{k=k_0}^{K}( 2+{  \frac{2}{\alpha_{k}(\overline{\rho}-\rho)} }) \beta_k^2 \EE\|\vx^{(k)}-\vx^{(k-1)}\|^2 \le & ~ \textstyle \frac{\overline\rho}{2}(2\tilde\beta^2 + {  \frac{2\beta^2}{\alpha(\overline{\rho}-\rho)} })\big(C_1 + C_2\sum_{k=1}^K\alpha_k^2\big)\cr
	\le & ~ \textstyle \frac{\overline\rho}{2}(2\tilde\beta^2 + {  \frac{2\beta^2}{\alpha(\overline{\rho}-\rho)} })\big(C_1 + C_2\alpha^2(1+\ln K)\big),
	\end{align}
	with $C_1$ and $C_2$ defined in \eqref{eq:def_C12}.
In addition, $\sum_{k=k_0}^K\alpha_k \ge \alpha \int_{k_0}^{K+1}\frac{1}{\sqrt{x}}dx = 2\alpha(\sqrt{K+1}-\sqrt{k_0})$ and  $\sum_{k=k_0}^K\alpha_k^2 \le \alpha^2 + \alpha^2 \int_{k_0}^{K}\frac{1}{x}dx = \alpha^2(1+\ln K-\ln k_0)$. Hence, substituting \eqref{eq:bound-nsm-error-term-vary} and \eqref{eq:bd-sum-xdiff-C12-k0} into \eqref{eq:nsm-general} 
gives the desired result.  
\end{proof}

\begin{remark}
For the no-delay case (i.e., $\tau=0$), we can set $k_0=1$ in Theorem~\ref{thm:rate-nsm-vary-delay}; then the assumption $|F(\vx)| \le C_F, \forall\, \vx\in\dom(r)$ is not needed anymore. When $\tau>0$, the negative effect by the delay will not vanish as $K\to \infty$, similar to what we observe for the result in Theorem~\ref{thm:nsm-rate-fixed-delay}. Suppose that we have an estimate on $\tau$ and $K\gg \tau^4$. We can set $k_0=\Omega(\tau^4)$. Then the terms caused by the delay will near-linearly depend on $\tau$.
\end{remark}

\section{Convergence analysis for nonconvex composite problems}\label{sec:prox-async}

In this section, we analyze Alg.~\ref{alg:async-hvb-sgm} for problems in the form of \eqref{eq:stoc-prob}, where  
 $F$ is smooth and $r$ is a possibly nonsmooth convex function. 
 Instead of the $\rho$-weak convexity, we assume the $\rho$-smoothness condition on $F$. Here, we abuse the notation of $\rho$, which is used as the weak-convexity constant in the previous section. Nevertheless, if $F$ is $\rho$-smooth, it is also $\rho$-weakly convex. The stronger assumption will enable us to obtain better convergence result in terms of the effect caused by the staleness of the gradient.
\begin{assumption}[$\rho$-smoothness]\label{smoothness}
	$F(\vx)$ is $\rho$-smooth in $\dom(r)$, i.e., $$\|\nabla F(\vx)-\nabla F(\vy)\|\leq \rho \|\vx-\vy\|, \forall\, \vx,\vy\in \dom(r).$$
\end{assumption}
 
 When $F$ is smooth, it is standard to replace Assumption~\ref{assump:subgrad-bound} by the one below. 
 \begin{assumption}[bounded variance]\label{assump:variance}
There is $\sigma\ge 0$ such that  $\EE_\xi \| \nabla f(\vx;\xi)-\nabla F(\vx)\|^2\le \sigma^2$ for all $\vx\in\dom\,r$.
\end{assumption}

In addition, when $F$ is smooth, we only need a boundedness condition on the staleness but not a static distribution anymore.
\begin{assumption}[bounded staleness]\label{assump:bound-tau}
	There is a finite integer $\tau$ such that $\tau_k\le \tau$ for all $k\ge1$.
\end{assumption}
We can track the delay and ensure the boundedness of delay by discarding too outdated sample gradients.

\begin{lemma}\label{lem:g-deviate}	
	Under Assumptions~\ref{assump:unbiased}, \ref{smoothness}, and \ref{assump:variance}, the iterates $\{\vx^{(k)}\}$ from Algorithm~\ref{alg:async-hvb-sgm} satisfy 
	$$\EE_{\xi_k} \|\vg^{(k)}-\nabla F (\vx^{(k)})\|^2   \le \sigma^2+\rho^2\|\vx^{(k-\tau_k)} - \vx^{(k)}\|^2.$$
\end{lemma}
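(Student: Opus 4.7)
The plan is straightforward: decompose the deviation into a stochastic fluctuation around the (delayed) true gradient and a deterministic bias coming from the staleness, then use orthogonality of the two pieces under $\EE_{\xi_k}$.

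Concretely, I would write
\begin{equation*}
\vg^{(k)}-\nabla F(\vx^{(k)}) = \bigl(\vg^{(k)} - \nabla F(\vx^{(k-\tau_k)})\bigr) + \bigl(\nabla F(\vx^{(k-\tau_k)}) - \nabla F(\vx^{(k)})\bigr).
\end{equation*}
By Assumption~\ref{assump:unbiased}, and since $F$ is differentiable under Assumption~\ref{smoothness}, the conditional mean $\EE_{\xi_k}[\vg^{(k)}] = \nabla F(\vx^{(k-\tau_k)})$, so the first term has zero conditional mean. The second term depends only on $\vx^{(k-\tau_k)}$ and $\vx^{(k)}$, which are determined by the history $\{\xi_j\}_{j=1}^{k-1}$, hence it is deterministic with respect to $\EE_{\xi_k}$. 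Expanding the squared norm, the cross term therefore vanishes, and I get
\begin{equation*}
\EE_{\xi_k}\|\vg^{(k)}-\nabla F(\vx^{(k)})\|^2 = \EE_{\xi_k}\|\vg^{(k)} - \nabla F(\vx^{(k-\tau_k)})\|^2 + \|\nabla F(\vx^{(k-\tau_k)}) - \nabla F(\vx^{(k)})\|^2.
\end{equation*}

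The first term is bounded by $\sigma^2$ by Assumption~\ref{assump:variance} (noting $\vg^{(k)} = \nabla f(\vx^{(k-\tau_k)};\xi_k)$ when $F$ is smooth). The second term is bounded by $\rho^2\|\vx^{(k-\tau_k)}-\vx^{(k)}\|^2$ by the $\rho$-Lipschitz continuity of $\nabla F$ in Assumption~\ref{smoothness}. Combining the two bounds gives the claimed inequality. There is no real obstacle here; the only subtlety worth being explicit about is the measurability of $\vx^{(k-\tau_k)}$ with respect to the conditioning sigma-algebra, which is what legitimizes pulling $\nabla F(\vx^{(k-\tau_k)}) - \nabla F(\vx^{(k)})$ out of the conditional expectation as a deterministic quantity.
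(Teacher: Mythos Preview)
Your proposal is correct and follows essentially the same argument as the paper: add and subtract $\nabla F(\vx^{(k-\tau_k)})$, use unbiasedness so the cross term vanishes under $\EE_{\xi_k}$, then apply Assumption~\ref{assump:variance} and the $\rho$-smoothness bound. The only difference is that you are slightly more explicit about the measurability of $\vx^{(k-\tau_k)}$ with respect to the conditioning $\sigma$-algebra, which the paper leaves implicit.
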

\begin{proof}
When $F$ is differentiable, the condition in Assumption~\ref{assump:unbiased}	becomes $\EE_{\xi_k} [\vg^{(k)}]=\nabla F (\vx^{(k-\tau_k)})$. Hence,   
	\begin{align}
		\EE_{\xi_k} \|\vg^{(k)}-\nabla F (\vx^{(k)})\|^2  
		=& ~\EE_{\xi_k} \|\vg^{(k)}-\nabla F (\vx^{(k-\tau_k)})+ \nabla F (\vx^{(k-\tau_k)})-\nabla F (\vx^{(k)})\|^2   \nonumber\\
		=&~\EE_{\xi_k} \|\vg^{(k)}-\nabla F (\vx^{(k-\tau_k)})\|^2 +  \|\nabla F (\vx^{(k-\tau_k)})-\nabla F ( \vx^{(k)})\|^2 \nonumber\\
		\le&~\sigma^2 +\rho^2\|\vx^{(k-\tau_k)} - \vx^{(k)}\|^2,\nonumber
	\end{align}
where the second inequality follows from $\EE_{\xi_k}\big\langle\vg^{(k)}-\nabla F (\vx^{(k-\tau_k)}), \nabla F (\vx^{(k-\tau_k)})-\nabla F (\vx^{(k)})\big\rangle = 0$, and the inequality holds by using Assumptions~\ref{smoothness} and \ref{assump:variance}. This completes the proof.	
\end{proof}

\begin{lemma}\label{lem:keybound}
Under Assumptions~\ref{assump:unbiased},  \ref{smoothness} and \ref{assump:variance}, let $\overline{\rho}>\rho$ and $\alpha_{k}\in (0, 1/\overline{\rho}]$ for all $k$. Then the iterates $\{\vx^{(k)}\}$ from Algorithm~\ref{alg:async-hvb-sgm} with a stepsize sequence $\{\alpha_k\}$ satisfies
	\begin{equation}\label{eq:keybound}
	\begin{aligned}
		\EE_{\xi_k}  \|\vx^{(k+1)}-\widetilde\vx^{(k)} \|^2  
		\le &~  \| \vx^{(k)}-\widetilde\vx^{(k)}  \|^2 -\left(\textstyle \frac{1}{2}\alpha_{k}(\overline{\rho}-\rho) -c_k\right)   \| \vx^{(k)}-\widetilde\vx^{(k)}  \|^2 \\ 
		&~ \hspace{-1.cm}+ \textstyle (2+\frac{1}{c_k}) \beta_k^2 \|\vx^{(k)}-\vx^{(k-1)}\|^2  
		+   \alpha_{k}^2\sigma^2   +2\big( \alpha_{k}^2+\frac{ \alpha_{k}} {\overline{\rho}-\rho}\big) \rho^2\|\vx^{(k-\tau_k)} - \vx^{(k)}\|^2, 
	\end{aligned}	
	\end{equation}
	where $c_k$ is any positive number, $\gamma_1\in(0,1)$, and $\widetilde\vx^{(k)}$ is defined in \eqref{eq:def-xtilde}.
\end{lemma}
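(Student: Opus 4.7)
The plan is to mirror the proof of Lemma~\ref{lem:keybound2} while replacing the weak-convexity argument \eqref{eq:bd-crs-term} by the $\rho$-smoothness of $F$, since Lipschitz continuity of $\nabla F$ gives quantitatively sharper control of the delay term than weak convexity. Starting from the update \eqref{eq:update-x-2} together with Lemma~\ref{lem:xwidetilde}, the nonexpansiveness of $\prox_{\alpha_k r}$ immediately gives
\[
\|\vx^{(k+1)} - \widetilde\vx^{(k)}\|^2 \le \bigl\|(1-\alpha_k\overline{\rho})(\vx^{(k)} - \widetilde\vx^{(k)}) - \alpha_k(\vg^{(k)} - \widetilde\vv^{(k)}) + \beta_k(\vx^{(k)} - \vx^{(k-1)})\bigr\|^2.
\]
The first move is to split $\vg^{(k)} = \vv^{(k)} + (\vg^{(k)} - \vv^{(k)})$ where $\vv^{(k)} := \nabla F(\vx^{(k-\tau_k)}) = \EE_{\xi_k}[\vg^{(k)}]$, so that the noise $\vg^{(k)} - \vv^{(k)}$ has zero conditional mean with second moment at most $\sigma^2$ by Assumption~\ref{assump:variance}; upon taking $\EE_{\xi_k}$, this extracts a standalone $\alpha_k^2 \sigma^2$ term and reduces the problem to bounding the deterministic quantity $\|\vd_{\mathrm{det}}\|^2$ with $\vd_{\mathrm{det}} := (1-\alpha_k\overline{\rho})(\vx^{(k)} - \widetilde\vx^{(k)}) - \alpha_k(\vv^{(k)} - \widetilde\vv^{(k)}) + \beta_k(\vx^{(k)} - \vx^{(k-1)})$.

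Next, expand $\|\vd_{\mathrm{det}}\|^2$ and apply Young's inequalities with parameter $c_k$ to decouple the inertial contribution, so that $\beta_k^2\|\vx^{(k)}-\vx^{(k-1)}\|^2$ acquires the coefficient $(2+\tfrac{1}{c_k})$ stated in the lemma together with an additive $c_k\|\vx^{(k)}-\widetilde\vx^{(k)}\|^2$. The two pieces involving $\vv^{(k)}-\widetilde\vv^{(k)} = \nabla F(\vx^{(k-\tau_k)}) - \nabla F(\widetilde\vx^{(k)})$ are then handled via smoothness. For the squared piece $\alpha_k^2\|\vv^{(k)}-\widetilde\vv^{(k)}\|^2$, the Lipschitz bound combined with $\|\vx^{(k-\tau_k)} - \widetilde\vx^{(k)}\|^2 \le 2\|\vx^{(k-\tau_k)} - \vx^{(k)}\|^2 + 2\|\vx^{(k)} - \widetilde\vx^{(k)}\|^2$ supplies the $2\alpha_k^2\rho^2$ factor in the delay coefficient. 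For the cross term $-2\alpha_k(1-\alpha_k\overline{\rho})\langle \vx^{(k)}-\widetilde\vx^{(k)},\, \vv^{(k)}-\widetilde\vv^{(k)}\rangle$, split $\vv^{(k)}-\widetilde\vv^{(k)}$ as $(\nabla F(\vx^{(k-\tau_k)}) - \nabla F(\vx^{(k)})) + (\nabla F(\vx^{(k)}) - \nabla F(\widetilde\vx^{(k)}))$: the second piece is controlled by $\rho$-hypomonotonicity of $\nabla F$ and contributes only $\rho\|\vx^{(k)}-\widetilde\vx^{(k)}\|^2$, while the first piece is controlled by the Lipschitz bound followed by a Young step with the tuned parameter $\eta = (\overline{\rho}-\rho)/2$, which is precisely the choice producing the $\tfrac{2\alpha_k\rho^2}{\overline{\rho} - \rho}$ piece of the delay coefficient.

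To finish, collect all contributions to $\|\vx^{(k)}-\widetilde\vx^{(k)}\|^2$ and use the assumption $\alpha_k \overline{\rho} \le 1$, which yields $(1-\alpha_k\overline{\rho})^2 \le 1 - \alpha_k\overline{\rho}$; the $2\alpha_k\rho$ and $\alpha_k(\overline{\rho}-\rho)/2$ contributions from the smoothness steps then combine with the baseline so as to leave at most $1 - \tfrac{1}{2}\alpha_k(\overline{\rho} - \rho) + c_k$ after the lower-order $\alpha_k^2\rho^2$ residues are absorbed into $c_k$. Adding the $\alpha_k^2\sigma^2$ extracted from the noise and the collected $\|\vx^{(k-\tau_k)} - \vx^{(k)}\|^2$ terms delivers \eqref{eq:keybound}. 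The main obstacle is the precise bookkeeping: every Young parameter and every triangle-type split must be chosen so that the coefficients of $\|\vx^{(k-\tau_k)} - \vx^{(k)}\|^2$ and $\beta_k^2\|\vx^{(k)}-\vx^{(k-1)}\|^2$ come out free of $c_k$ and match the statement exactly, with the choice $\eta = (\overline{\rho}-\rho)/2$ and the monotone absorption via $\alpha_k \overline{\rho} \le 1$ being the decisive technical moves.
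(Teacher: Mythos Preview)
Your plan uses the right ingredients but the bookkeeping, as written, does not close. Two concrete issues:

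\begin{itemize}
\item After you expand $\|\vd_{\mathrm{det}}\|^2$, there is a third cross term $-2\alpha_k\beta_k\langle \vv^{(k)}-\widetilde\vv^{(k)},\,\vx^{(k)}-\vx^{(k-1)}\rangle$ in addition to the two ``pieces involving $\vv^{(k)}-\widetilde\vv^{(k)}$'' you list. If you handle it with Young, you pick up an extra $\alpha_k^2\|\vv^{(k)}-\widetilde\vv^{(k)}\|^2$, and after your triangle split the delay coefficient becomes $4\alpha_k^2\rho^2$, not the $2\alpha_k^2\rho^2$ in the statement.
\item The step ``absorb the $\alpha_k^2\rho^2$ residues into $c_k$'' is not legitimate: the lemma asserts the bound for \emph{every} $c_k>0$, so you cannot inflate $c_k$ by a fixed positive amount.
\end{itemize}

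The paper avoids both problems by a different decomposition: it pivots at $\nabla F(\vx^{(k)})$ rather than at $\vv^{(k)}=\nabla F(\vx^{(k-\tau_k)})$. Writing $\vg^{(k)}-\widetilde\vv^{(k)}=(\nabla F(\vx^{(k)})-\nabla F(\widetilde\vx^{(k)}))+\vw^{(k)}$ with $\vw^{(k)}=\vg^{(k)}-\nabla F(\vx^{(k)})$, the deterministic main part has norm at most $(\delta+\alpha_k\rho)\|\vx^{(k)}-\widetilde\vx^{(k)}\|+\beta_k\|\vx^{(k)}-\vx^{(k-1)}\|$ by the triangle inequality and the Lipschitz bound on $\nabla F$ (not hypomonotonicity). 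Squaring and a single Young step with $c_k$ then gives $(1+c_k)(\delta+\alpha_k\rho)^2\le 1-\alpha_k(\overline\rho-\rho)+c_k$ with no leftover $\alpha_k^2\rho^2$. The price is that $\EE_{\xi_k}[\vw^{(k)}]=\nabla F(\vx^{(k-\tau_k)})-\nabla F(\vx^{(k)})\neq 0$, so a residual cross term $\cE$ survives; it is this $\cE$ that is bounded with your $\eta=(\overline\rho-\rho)/2$ choice, contributing the $\tfrac{2\alpha_k\rho^2}{\overline\rho-\rho}$ delay piece, an additional $\beta_k^2$ (yielding the total $2+\tfrac{1}{c_k}$), and a second $\alpha_k^2\rho^2$ delay piece. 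The $\alpha_k^2\sigma^2$ and the first $\alpha_k^2\rho^2$ delay piece come from $\alpha_k^2\EE_{\xi_k}\|\vw^{(k)}\|^2$ via Lemma~\ref{lem:g-deviate}.

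In short: replace hypomonotonicity of $\nabla F$ by the direct norm bound $\|\nabla F(\vx^{(k)})-\nabla F(\widetilde\vx^{(k)})\|\le\rho\|\vx^{(k)}-\widetilde\vx^{(k)}\|$ and fold it into the $(\delta+\alpha_k\rho)$ coefficient \emph{before} squaring; this is the decisive technical move that makes the constants come out exactly.
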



Using the previous two lemmas, we show a convergence result below for generic parameters.

\begin{theorem}\label{thm:prox-async}
Under Assumptions~\ref{assump:unbiased},  \ref{smoothness} and \ref{assump:variance}, let $\overline{\rho}>\rho$ and $\alpha_{k}\in (0, 1/\overline{\rho}]$ for all $k\ge1$. Given a positive integer $K$,  let $\{\vx^{(k)}\}_{k=1}^K$ 
	be the sequence generated from Algorithm~\ref{alg:async-hvb-sgm} with a stepsize sequence $\{\alpha_{k}\}_{k=1}^{K}$ and inertial parameter $\{\beta_k\}$. Then   
	\begin{equation}\label{eq:prox-async}
	\begin{aligned}
		\EE\| \nabla\phi_{1/\overline{\rho}} (\vx^{(T)})  \|^2 
		\le&~\textstyle\frac{8\overline{\rho}}{(\overline{\rho}-\rho)\sum_{k=1}^{K} \alpha_{k}} \Big[ \overline{\rho}\sum_{k=1}^{K}\big( 2+\frac{4}{\alpha_{k}(\overline{\rho}-\rho)} \big) \beta_k^2 \EE\|\vx^{(k)}-\vx^{(k-1)}\|^2 \\
		&\hspace{-1.5cm}\textstyle \phi_{1/\overline{\rho}}(\vx^{(1)})- \phi^*   +    \frac{\sigma^2\overline{\rho}}{2}\sum_{k=1}^{K}\alpha_{k}^2   + \frac{\overline{\rho}\rho^2}{2}\sum_{k=1}^{K}\big(  \alpha_{k}^2+\frac{ \alpha_{k} }{\overline{\rho}-\rho} \big) \EE\|\vx^{(k-\tau_k)} - \vx^{(k)}\|^2\Big],
	\end{aligned}
	\end{equation}
	where $T$ is randomly selected from $\{1,\ldots, K\}$ by \eqref{eq:def-T}.
\end{theorem}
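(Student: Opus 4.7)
}

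The plan is to mimic the argument used for Theorem~\ref{thm:prox-sync} (the weakly-convex case), but with Lemma~\ref{lem:keybound2} replaced by the sharper recursion in Lemma~\ref{lem:keybound} that is available under smoothness. The heavy lifting has already been done: Lemma~\ref{lem:keybound} provides a one-step bound on $\EE_{\xi_k}\|\vx^{(k+1)}-\widetilde{\vx}^{(k)}\|^2$, and Lemma~\ref{lem:prox-grad} converts the residual $\|\vx^{(k)}-\widetilde{\vx}^{(k)}\|$ into the Moreau-envelope gradient norm we want to control.

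\textbf{Step 1 (turn the residual recursion into a descent on $\phi_{1/\overline{\rho}}$).} I would start from the definition of the Moreau envelope applied to $\vx^{(k+1)}$ and the suboptimality of $\widetilde{\vx}^{(k)}$, which yields
\[
\phi_{1/\overline{\rho}}(\vx^{(k+1)}) \;\le\; \phi(\widetilde{\vx}^{(k)}) + \tfrac{\overline{\rho}}{2}\|\vx^{(k+1)}-\widetilde{\vx}^{(k)}\|^2.
\]
Take $\EE_{\xi_k}$ on both sides, plug in \eqref{eq:keybound}, and recognize $\phi(\widetilde{\vx}^{(k)})+\tfrac{\overline{\rho}}{2}\|\vx^{(k)}-\widetilde{\vx}^{(k)}\|^2=\phi_{1/\overline{\rho}}(\vx^{(k)})$ to obtain a one-step inequality of the form
\[
\EE_{\xi_k}\phi_{1/\overline{\rho}}(\vx^{(k+1)}) \le \phi_{1/\overline{\rho}}(\vx^{(k)}) - \tfrac{\overline{\rho}}{2}\bigl(\tfrac12\alpha_k(\overline{\rho}-\rho)-c_k\bigr)\|\vx^{(k)}-\widetilde{\vx}^{(k)}\|^2 + R_k,
\]
where $R_k$ collects the $\beta_k^2$ inertial term, the $\sigma^2$ variance term, and the delay term involving $\|\vx^{(k-\tau_k)}-\vx^{(k)}\|^2$.

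\textbf{Step 2 (choose $c_k$, telescope, and lower-bound the envelope).} I will pick $c_k = \tfrac14\alpha_k(\overline{\rho}-\rho)$, which keeps the coefficient in front of $\|\vx^{(k)}-\widetilde{\vx}^{(k)}\|^2$ strictly positive (namely $\tfrac{\overline{\rho}}{8}\alpha_k(\overline{\rho}-\rho)$) and sets $1/c_k=4/(\alpha_k(\overline{\rho}-\rho))$, matching the coefficient appearing in \eqref{eq:prox-async}. Taking full expectation, summing the one-step inequality from $k=1$ to $K$, and using $\EE\phi_{1/\overline{\rho}}(\vx^{(K+1)})\ge\phi^\ast$ gives
\[
\tfrac{\overline{\rho}(\overline{\rho}-\rho)}{8}\sum_{k=1}^{K}\alpha_k\,\EE\|\vx^{(k)}-\widetilde{\vx}^{(k)}\|^2 \;\le\; \phi_{1/\overline{\rho}}(\vx^{(1)})-\phi^\ast + \sum_{k=1}^{K}\EE R_k.
\]

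\textbf{Step 3 (translate to the Moreau gradient and use the random index $T$).} By Lemma~\ref{lem:prox-grad}, $\|\vx^{(k)}-\widetilde{\vx}^{(k)}\|^2 = \overline{\rho}^{-2}\|\nabla\phi_{1/\overline{\rho}}(\vx^{(k)})\|^2$, so the left-hand side above equals $\tfrac{(\overline{\rho}-\rho)}{8\overline{\rho}}\sum_k\alpha_k\EE\|\nabla\phi_{1/\overline{\rho}}(\vx^{(k)})\|^2$. Dividing through by $(\overline{\rho}-\rho)\sum_k\alpha_k/(8\overline{\rho})$ and recognizing the weighted sum as $\EE\|\nabla\phi_{1/\overline{\rho}}(\vx^{(T)})\|^2$ under the distribution \eqref{eq:def-T} produces exactly the prefactor $\tfrac{8\overline{\rho}}{(\overline{\rho}-\rho)\sum_k\alpha_k}$ on the right-hand side of \eqref{eq:prox-async}, with the bracketed terms being $\phi_{1/\overline{\rho}}(\vx^{(1)})-\phi^\ast + \sum_k\EE R_k$; substituting in the explicit form of $R_k$ from Step~1 yields the inertial, variance, and delay-deviation sums appearing in the statement.

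\textbf{Expected difficulty.} The argument is essentially mechanical once Lemma~\ref{lem:keybound} is in hand, so I do not expect a real obstacle; the only care needed is in balancing $c_k$ against $\alpha_k(\overline{\rho}-\rho)$ to maintain a positive descent coefficient while keeping $1/c_k$ of order $1/(\alpha_k(\overline{\rho}-\rho))$, and in carefully tracking the two distinct $\alpha_k$-dependencies ($\alpha_k^2$ vs.\ $\alpha_k/(\overline{\rho}-\rho)$) in the delay term. The delay sum $\sum_k(\alpha_k^2+\alpha_k/(\overline{\rho}-\rho))\EE\|\vx^{(k-\tau_k)}-\vx^{(k)}\|^2$ is deliberately left un-controlled in this generic statement; bounding it (via an analogue of Lemma~\ref{lem:bdx-fix-alpha} together with a delay-to-consecutive-differences telescoping of the form $\|\vx^{(k-\tau_k)}-\vx^{(k)}\|^2\le \tau\sum_{j=k-\tau}^{k-1}\|\vx^{(j+1)}-\vx^{(j)}\|^2$) will be the job of the specialized corollaries that follow.
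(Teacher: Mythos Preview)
Your proposal is correct and follows essentially the same approach as the paper's own proof: start from the Moreau-envelope suboptimality inequality, insert the one-step bound of Lemma~\ref{lem:keybound}, choose $c_k=\tfrac14\alpha_k(\overline{\rho}-\rho)$, telescope and lower-bound by $\phi^*$, then invoke Lemma~\ref{lem:prox-grad} and the definition of $T$ in \eqref{eq:def-T}. Your tracking of the constants (the $\tfrac{\overline{\rho}}{8}\alpha_k(\overline{\rho}-\rho)$ descent coefficient and the resulting $8\overline{\rho}/[(\overline{\rho}-\rho)\sum_k\alpha_k]$ prefactor) matches the paper, and your remark that the delay sum is intentionally left raw here, to be controlled later via \eqref{eq:delay-x-tau-bd} and a consecutive-difference bound, is exactly how the paper proceeds in Theorems~\ref{thm:rate-prox-delay} and~\ref{thm:rate-prox-delay-vary}.
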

\begin{proof}
	By the definition of $\phi_\lambda$ in \eqref{eq:moreau} and Lemma~\ref{lem:keybound}, we have
	\begin{align}\label{eq:phi-ineq-prox1}
		&~\EE_{\xi_k} \big[ \phi_{1/\overline{\rho}}(\vx^{(k+1)})\big] \nonumber\\
		\le& ~\EE_{\xi_k} \big[\textstyle \phi (\widetilde\vx^{(k)}) + \frac{\overline{\rho}}{2}\|\vx^{(k+1)}-\widetilde\vx^{(k)}\|^2\big] \nonumber\\
		\le&~\textstyle\phi (\widetilde\vx^{(k)}) +\frac{\overline{\rho}}{2} \Big[ \| \vx^{(k)}-\widetilde\vx^{(k)}  \|^2 -( \frac{1}{2}\alpha_{k}(\overline{\rho}-\rho) -c_k)   \| \vx^{(k)}-\widetilde\vx^{(k)}  \|^2 \nonumber\\
		&~  \textstyle + \big(2+\frac{1}{c_k}\big) \beta_k^2 \|\vx^{(k)}-\vx^{(k-1)}\|^2 +  \alpha_{k}^2\sigma^2   +2\big( \alpha_{k}^2+\frac{ \alpha_{k} }{\overline{\rho}-\rho}\big) \rho^2\|\vx^{(k-\tau_k)} - \vx^{(k)}\|^2 \Big]\nonumber\\
		=&~ \textstyle \phi_{1/\overline{\rho}}(\vx^{(k)})  -\frac{\overline{\rho}}{2}( \frac{1}{2}\alpha_{k}(\overline{\rho}-\rho) -c_k)   \| \vx^{(k)}-\widetilde\vx^{(k)}  \|^2+ \frac{\overline{\rho}}{2}\big(2+\frac{1}{c_k}\big) \beta_k^2 \|\vx^{(k)}-\vx^{(k-1)}\|^2\\
		&~+  \textstyle  \frac{\overline{\rho}\alpha_{k}^2\sigma^2}{2}   +2\big( \alpha_{k}^2+\frac{ \alpha_{k} }{\overline{\rho}-\rho}\big) \frac{\overline{\rho}\rho^2}{2}\|\vx^{(k-\tau_k)} - \vx^{(k)}\|^2.  \nonumber
	\end{align}
	Take full expectation on both sides of \eqref{eq:phi-ineq-prox1} and sum up it over $k=1,\ldots,K$. Then we have  
	{\small	
	\begin{align}\label{eq:phi-ineq-prox2}
		&~\EE\big[ \phi_{1/\overline{\rho}}(\vx^{(K+1)})\big] \nonumber\\
		\le&~\textstyle\EE\big[ \phi_{1/\overline{\rho}}(\vx^{(1)})\big]   -\frac{\overline{\rho}}{2}\sum_{k=1}^{K}\big( \frac{1}{2} \alpha_{k}(\overline{\rho}-\rho) -c_k\big)   \EE\| \vx^{(k)}-\widetilde\vx^{(k)}  \|^2+ \overline{\rho}\sum_{k=1}^{K}\big( 2+\frac{1}{c_k} \big) \beta_k^2 \EE\|\vx^{(k)}-\vx^{(k-1)}\|^2 \nonumber\\
		&\textstyle+    \frac{\sigma^2\overline{\rho}}{2}\sum_{k=1}^{K}\alpha_{k}^2   + \frac{\overline{\rho}\rho^2}{2}\sum_{k=1}^{K}\big(  \alpha_{k}^2+\frac{ \alpha_{k} }{\overline{\rho}-\rho} \big) \EE\|\vx^{(k-\tau_k)} - \vx^{(k)}\|^2. 
	\end{align}
}Choose $c_k = 	\frac{1}{4} \alpha_{k}(\overline{\rho}-\rho)$ for all $k$ and replace $\| \vx^{(k)}-\widetilde\vx^{(k)}  \|^2$ by $\frac{1}{\overline\rho^2} \|\nabla\phi_{1/\overline{\rho}} (\vx^{(k)})  \|^2 $ from Lemma~\ref{lem:prox-grad}. We have from \eqref{eq:phi-ineq-prox2} that
{\small\begin{align}\label{eq:phi-ineq-prox3}
		&~\EE\big[ \phi_{1/\overline{\rho}}(\vx^{(K+1)})\big] \nonumber\\
		\le&~\textstyle\EE\big[ \phi_{1/\overline{\rho}}(\vx^{(1)})\big]   -\frac{1}{8\overline{\rho}}\sum_{k=1}^{K}\alpha_{k}(\overline{\rho}-\rho)    \EE\| \nabla\phi_{1/\overline{\rho}} (\vx^{(k)})  \|^2+ \overline{\rho}\sum_{k=1}^{K}\big( 2+\frac{4}{\alpha_{k}(\overline{\rho}-\rho)} \big) \beta_k^2 \EE\|\vx^{(k)}-\vx^{(k-1)}\|^2 \nonumber\\
		&\textstyle+    \frac{\sigma^2\overline{\rho}}{2}\sum_{k=1}^{K}\alpha_{k}^2   + \frac{\overline{\rho}\rho^2}{2}\sum_{k=1}^{K}\big(  \alpha_{k}^2+\frac{ \alpha_{k} }{\overline{\rho}-\rho} \big) \EE\|\vx^{(k-\tau_k)} - \vx^{(k)}\|^2. 
	\end{align}
}Rearrange terms in \eqref{eq:phi-ineq-prox3} and notice $\phi_{1/\overline{\rho}}(\vx^{(K+1)})\ge\phi^*$, we obtain the desired result by the definition of $T$.	
\end{proof}

To show the convergence rate, we still need the following result to bound $\sum_{k\ge1}\EE\|\vx^{(k+1)} - \vx^{(k)}\|^2$.
\begin{lemma}
Let $\{\vx^{(k)}\}$ be generated from Alg.~\ref{alg:async-hvb-sgm}. Under Assumptions~~\ref{smoothness} and \ref{assump:variance}, it holds for any $\gamma>0$,
\begin{equation}\label{eq:bd-x-diff0-prox}
\begin{aligned}
\textstyle (1-\gamma-\frac{\alpha_k\rho}{2} - \frac{\beta_k}{2})\EE_{\xi_k}\|\vx^{(k+1)} - \vx^{(k)}\|^2 \le & ~\textstyle \alpha_k\EE_{\xi_k}\big(\phi(\vx^{(k)}) - \phi(\vx^{(k+1)}) \big) + \frac{\beta_k}{2} \|\vx^{(k)}-\vx^{(k-1)}\|^2 \\
 & ~+ \textstyle \frac{\alpha_k^2}{2\gamma}\left(\rho^2\|\vx^{(k)} - \vx^{(k-\tau_k)}\|^2 + \sigma^2\right).
\end{aligned}
\end{equation}

\end{lemma}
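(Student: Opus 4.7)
The target inequality \eqref{eq:bd-x-diff0-prox} is the smooth analogue of the weakly-convex bound \eqref{eq:bd-x-diff0}, so my plan is to mirror the proof of \eqref{eq:bd-x-diff0} step by step, substituting the $\rho$-smoothness of $F$ for the $\rho$-weak convexity, and substituting the bounded-variance bound of Lemma~\ref{lem:g-deviate} for the bounded-subgradient bound. Concretely, first I would exploit the convexity of $r$ together with the first-order optimality condition of the proximal step \eqref{eq:update-x-2}: since
$\vzero \in \alpha_k\partial r(\vx^{(k+1)}) + \vx^{(k+1)}-\vx^{(k)} + \alpha_k \vg^{(k)} -\beta_k(\vx^{(k)}-\vx^{(k-1)})$,
pairing with $\vx^{(k+1)}-\vx^{(k)}$ and using $\langle \vx^{(k)}-\vx^{(k+1)}, \tilde\nabla r(\vx^{(k+1)})\rangle \le r(\vx^{(k)})-r(\vx^{(k+1)})$ yields exactly \eqref{eq:bd-x-diff}, just as in the weakly-convex proof.

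Second, I would replace the weak-convexity estimate \eqref{eq:bd-x-g} by the descent lemma \eqref{eq:ineq-smooth} for the smooth function $F$. This gives
\[
\alpha_k\langle \vx^{(k+1)}-\vx^{(k)}, \nabla F(\vx^{(k)})\rangle \ge \alpha_k\bigl(F(\vx^{(k+1)})-F(\vx^{(k)})\bigr) - \tfrac{\alpha_k\rho}{2}\|\vx^{(k+1)}-\vx^{(k)}\|^2,
\]
and splitting $\vg^{(k)} = \nabla F(\vx^{(k)}) + (\vg^{(k)}-\nabla F(\vx^{(k)}))$ converts \eqref{eq:bd-x-diff} into the analogue of \eqref{eq:bd-x-diff2}:
\[
\bigl(1-\tfrac{\alpha_k\rho}{2}\bigr)\|\vx^{(k+1)}-\vx^{(k)}\|^2 \le \alpha_k\bigl(\phi(\vx^{(k)})-\phi(\vx^{(k+1)})\bigr) + \beta_k\langle \vx^{(k+1)}-\vx^{(k)}, \vx^{(k)}-\vx^{(k-1)}\rangle - \alpha_k\langle \vx^{(k+1)}-\vx^{(k)}, \vg^{(k)}-\nabla F(\vx^{(k)})\rangle.
\]

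Third, I would take the conditional expectation $\EE_{\xi_k}$ and dispose of the two cross-terms by Young's inequality: the inertial term is absorbed by $\tfrac{\beta_k}{2}\|\vx^{(k+1)}-\vx^{(k)}\|^2 + \tfrac{\beta_k}{2}\|\vx^{(k)}-\vx^{(k-1)}\|^2$, while for the noise term I would apply $|\alpha_k\langle \va,\vb\rangle|\le \gamma\|\va\|^2+\tfrac{\alpha_k^2}{2\gamma}\|\vb\|^2$-type splitting (with the scaling chosen exactly so that the $\|\vx^{(k+1)}-\vx^{(k)}\|^2$ coefficient equals $\gamma$ and the $\|\vg^{(k)}-\nabla F(\vx^{(k)})\|^2$ coefficient equals $\tfrac{\alpha_k^2}{2\gamma}$). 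Finally I would apply Lemma~\ref{lem:g-deviate} to replace $\EE_{\xi_k}\|\vg^{(k)}-\nabla F(\vx^{(k)})\|^2$ by $\sigma^2+\rho^2\|\vx^{(k-\tau_k)}-\vx^{(k)}\|^2$, and collect $\|\vx^{(k+1)}-\vx^{(k)}\|^2$ on the left.

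The routine obstacle is simply calibrating the Young's-inequality constants so that exactly the stated coefficients $(1-\gamma-\tfrac{\alpha_k\rho}{2}-\tfrac{\beta_k}{2})$ and $\tfrac{\alpha_k^2}{2\gamma}$ emerge; the more substantive conceptual point is that $\vx^{(k+1)}$ is $\xi_k$-measurable, so we cannot kill the cross term $\langle \vx^{(k+1)}-\vx^{(k)}, \vg^{(k)}-\nabla F(\vx^{(k)})\rangle$ in expectation by orthogonality (as one could if we had paired with $\vx^{(k)}$ instead). This is why we must pay the variance price $\sigma^2$ plus the delay penalty $\rho^2\|\vx^{(k)}-\vx^{(k-\tau_k)}\|^2$ coming from Lemma~\ref{lem:g-deviate}, and it is also why the smoothness hypothesis is essential: without it, the delayed gradient error $\|\nabla F(\vx^{(k-\tau_k)})-\nabla F(\vx^{(k)})\|$ could not be controlled by iterate differences.
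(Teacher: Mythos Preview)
Your proposal is correct and follows essentially the same route as the paper's proof: optimality of the proximal step gives \eqref{eq:bd-x-diff}, the descent lemma \eqref{eq:ineq-smooth} replaces the weak-convexity estimate, and two Young inequalities handle the inertial and noise cross-terms before the variance bound is applied. The only cosmetic difference is bookkeeping of a factor of $2$: the paper uses the tight Young split $\alpha_k\langle\va,\vb\rangle\le\gamma\|\va\|^2+\tfrac{\alpha_k^2}{4\gamma}\|\vb\|^2$ and then the cruder bound $\EE_{\xi_k}\|\vg^{(k)}-\nabla F(\vx^{(k)})\|^2\le 2\sigma^2+2\rho^2\|\vx^{(k)}-\vx^{(k-\tau_k)}\|^2$, whereas you use the looser Young split with coefficient $\tfrac{\alpha_k^2}{2\gamma}$ together with the sharper bound from Lemma~\ref{lem:g-deviate}; both routes land on exactly the same right-hand side.
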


\begin{proof}
By the $\rho$-smoothness of $F$ and $\alpha_k>0$, it holds
\begin{equation}\label{eq:F-smooth-ineq}
\alpha_k \big(F(\vx^{(k+1)}) - F(\vx^{(k)})\big) \le \alpha_k \left( \textstyle \langle\vx^{(k+1)} - \vx^{(k)}, \nabla F(\vx^{(k)}) \rangle + \frac{\rho}{2}\|\vx^{(k+1)} - \vx^{(k)}\|^2\right).
\end{equation}
Also notice that \eqref{eq:bd-x-diff} still holds. Hence, we obtain, by adding \eqref{eq:bd-x-diff} and \eqref{eq:F-smooth-ineq} and rearranging terms, that
\begin{equation}\label{eq:bd-x-diff-prox}
\begin{aligned}
\textstyle (1-\frac{\alpha_k\rho}{2})\|\vx^{(k+1)} - \vx^{(k)}\|^2 \le & ~\alpha_k\big(\phi(\vx^{(k)}) - \phi(\vx^{(k+1)}) \big) + \alpha_k\big\langle\vx^{(k+1)} - \vx^{(k)}, \nabla F(\vx^{(k)}) - \vg^{(k)} \big\rangle \\
 & ~+ \beta_k\big\langle \vx^{(k+1)} - \vx^{(k)}, \vx^{(k)}-\vx^{(k-1)}\big\rangle.
\end{aligned}
\end{equation}

By the Young's inequality, we have for any $\gamma>0$,
\begin{equation}\label{eq:young-x-g}
\alpha_k\big\langle\vx^{(k+1)} - \vx^{(k)}, \nabla F(\vx^{(k)}) - \vg^{(k)} \big\rangle \le \textstyle \gamma\|\vx^{(k+1)} - \vx^{(k)}\|^2 + \frac{\alpha_k^2}{4\gamma}\|\nabla F(\vx^{(k)}) - \vg^{(k)}\|^2,
\end{equation}
and 
\begin{equation}\label{eq:young-x-diff}
\beta_k\big\langle \vx^{(k+1)} - \vx^{(k)}, \vx^{(k)}-\vx^{(k-1)}\big\rangle \le \textstyle\frac{\beta_k}{2}\left(\|\vx^{(k+1)} - \vx^{(k)}\|^2 + \|\vx^{(k)}-\vx^{(k-1)}\|^2\right).
\end{equation}
Plugging \eqref{eq:young-x-g}  and \eqref{eq:young-x-diff} into \eqref{eq:bd-x-diff-prox} gives
\begin{equation}\label{eq:bd-x-diff2-prox}
\begin{aligned}
\textstyle (1-\frac{\alpha_k\rho}{2})\|\vx^{(k+1)} - \vx^{(k)}\|^2 \le & ~\alpha_k\big(\phi(\vx^{(k)}) - \phi(\vx^{(k+1)}) \big) + \textstyle \gamma\|\vx^{(k+1)} - \vx^{(k)}\|^2  \\
 & ~ \hspace{-2cm} \textstyle + \frac{\alpha_k^2}{4\gamma}\|\nabla F(\vx^{(k)}) - \vg^{(k)}\|^2 + \frac{\beta_k}{2}\left(\|\vx^{(k+1)} - \vx^{(k)}\|^2 + \|\vx^{(k)}-\vx^{(k-1)}\|^2\right).
\end{aligned}
\end{equation}
Now notice $\EE_{\xi_k}\|\nabla F(\vx^{(k)}) - \vg^{(k)}\|^2 \le 2\|\nabla F(\vx^{(k)}) - \nabla F(\vx^{(k-\tau_k)})\|^2 + 2\EE_{\xi_k}\|\nabla F(\vx^{(k-\tau_k)}) - \vg^{(k)}\|^2$ and use Assumptions~\ref{smoothness} and \ref{assump:variance}. We obtain the desired result by taking a conditional expectation about $\xi_k$ over both sides of \eqref{eq:bd-x-diff2-prox} and rearranging terms.
\end{proof}

Now we are ready to show the convergence rate result.
\begin{theorem}[convergence rate with fixed stepsize]\label{thm:rate-prox-delay}
Under Assumptions~\ref{assump:unbiased},  \ref{smoothness}, \ref{assump:variance} and \ref{assump:bound-tau}, let $\overline{\rho}>\rho$ and $K$ be the maximum number of iterations. Choose $\alpha_k = \frac{\alpha}{\sqrt K}$ and $\beta_k= \frac{\beta}{K^{1/4}}$ for some $\alpha>0$ and $\beta\ge0$ such that $\tilde\gamma:=\frac{1}{2}-\frac{\alpha\rho}{2\sqrt K} - \frac{\tau\alpha^2\rho^2}{K}- \frac{\beta}{K^{1/4}}>0$. Let $\{\vx^{(k)}\}$ be the sequence from Alg.~\ref{alg:async-hvb-sgm}. Then
\begin{equation}\label{eq:prox-async-fix}
\begin{aligned}
		\textstyle\EE\|\nabla\phi_{1/\overline{\rho}} (\vx^{(T)})\|^2  
		\le&~\textstyle\frac{8\overline{\rho}}{(\overline{\rho}-\rho)\alpha\sqrt{K}}\Bigg[\phi_{1/\overline{\rho}}(\vx^{(1)})- \phi^*+ \frac{\sigma^2\overline{\rho}\alpha^2}{2} \\
&~\textstyle \hspace{-2cm}		+ \frac{1}{\tilde\gamma}\left(\overline{\rho}\big(2+\frac{4\sqrt K}{\alpha(\overline{\rho}-\rho)}\big)\frac{\beta}{\sqrt K} + \frac{\tau^2\overline{\rho}\rho^2}{2}\big(\frac{\alpha^2}{K} + \frac{ \alpha }{(\overline{\rho}-\rho)\sqrt K}\big)\right)\big( \frac{\alpha}{\sqrt K}\EE(\phi(\vx^{(1)}) - \phi^*) + \alpha^2\sigma^2\big) \Bigg],
	\end{aligned}    
\end{equation}
where $T$ is randomly selected from $\{1,\ldots, K\}$ by \eqref{eq:def-T} with $k_0=1$.
\end{theorem}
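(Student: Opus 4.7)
The plan is to derive \eqref{eq:prox-async-fix} by specializing the generic bound \eqref{eq:prox-async} from Theorem~\ref{thm:prox-async} and using the one-step estimate \eqref{eq:bd-x-diff0-prox} to control the residual summations. Plugging $\alpha_k=\alpha/\sqrt K$ and $\beta_k=\beta/K^{1/4}$ into \eqref{eq:prox-async} gives $\sum_k\alpha_k=\alpha\sqrt K$, $\sum_k\alpha_k^2=\alpha^2$, and reduces the momentum sum $\sum_k\beta_k^2\EE\|\vx^{(k)}-\vx^{(k-1)}\|^2$ and the delay sum $\sum_k(\alpha_k^2+\alpha_k/(\overline\rho-\rho))\EE\|\vx^{(k-\tau_k)}-\vx^{(k)}\|^2$ to constant multiples (times factors involving $\beta^2/\sqrt K$ or $\tau^2$ respectively) of the single quantity $S_K:=\sum_{k=1}^K\EE\|\vx^{(k+1)}-\vx^{(k)}\|^2$. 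So the whole problem reduces to giving an $O(\alpha/\sqrt K)$ bound on $S_K$.

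To bound $S_K$, I would take full expectation on \eqref{eq:bd-x-diff0-prox} and sum over $k=1,\dots,K$. Since $\alpha_k$ is constant, $\sum_k\alpha_k\EE(\phi(\vx^{(k)})-\phi(\vx^{(k+1)}))$ telescopes to at most $\frac{\alpha}{\sqrt K}(\phi(\vx^{(1)})-\phi^*)$. Using $\vx^{(0)}=\vx^{(1)}$, the momentum piece $\sum_k\frac{\beta_k}{2}\EE\|\vx^{(k)}-\vx^{(k-1)}\|^2$ shifts into $\frac{\beta}{2K^{1/4}}S_K$. For the delay piece, the key ingredient is the Cauchy--Schwarz estimate
\begin{equation*}
\|\vx^{(k)}-\vx^{(k-\tau_k)}\|^2\le \tau_k\!\!\sum_{j=k-\tau_k}^{k-1}\!\!\|\vx^{(j+1)}-\vx^{(j)}\|^2\le \tau\!\!\sum_{j=\max\{0,k-\tau\}}^{k-1}\!\!\|\vx^{(j+1)}-\vx^{(j)}\|^2,
\end{equation*}
so that under Assumption~\ref{assump:bound-tau}, after summing over $k$ each increment is counted at most $\tau$ times, giving $\sum_{k=1}^K\EE\|\vx^{(k)}-\vx^{(k-\tau_k)}\|^2\le \tau^2 S_K$. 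The variance contribution is $\sum_k\alpha_k^2\sigma^2/(2\gamma)=\alpha^2\sigma^2/(2\gamma)$. Choosing $\gamma=\tfrac12$ and moving every $S_K$ piece to the left yields $\tilde\gamma\,S_K\le \frac{\alpha}{\sqrt K}(\phi(\vx^{(1)})-\phi^*)+\alpha^2\sigma^2$, with $\tilde\gamma$ matching (up to the bookkeeping of $\tau^2$ vs.\ $\tau$ in the stated definition) the positive quantity in the hypothesis; inverting this is legitimate precisely because $\tilde\gamma>0$ is assumed.

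It remains to substitute this estimate back into \eqref{eq:prox-async}. The momentum summation contributes $\overline\rho(2+4\sqrt K/(\alpha(\overline\rho-\rho)))(\beta^2/\sqrt K)\,S_K$, the delay summation contributes $\frac{\tau^2\overline\rho\rho^2}{2}(\alpha^2/K+\alpha/((\overline\rho-\rho)\sqrt K))\,S_K$, and the noise term $\frac{\sigma^2\overline\rho}{2}\sum_k\alpha_k^2$ contributes $\sigma^2\overline\rho\alpha^2/2$. Dividing by $\frac{(\overline\rho-\rho)\alpha\sqrt K}{8\overline\rho}$ as in \eqref{eq:prox-async}, and using the bound $S_K\le\tilde\gamma^{-1}\big(\frac{\alpha}{\sqrt K}(\phi(\vx^{(1)})-\phi^*)+\alpha^2\sigma^2\big)$, exactly reproduces the bracketed expression in \eqref{eq:prox-async-fix}.

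The main obstacle is the careful choice of $\gamma$ in \eqref{eq:bd-x-diff0-prox} and the tight coordination between the momentum-shift coefficient, the $\tau^2$ coefficient coming from Cauchy--Schwarz on the delay, and the target definition of $\tilde\gamma$; this is what determines whether the coefficient on $S_K$ after rearrangement is strictly positive. Once that bookkeeping is done, the rest is a telescoping argument, a single application of Young's inequality (implicit in \eqref{eq:bd-x-diff0-prox}), and a Cauchy--Schwarz bound, all of which are routine.
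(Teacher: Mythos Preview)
Your proposal is correct and follows essentially the same route as the paper: specialize Theorem~\ref{thm:prox-async}, sum \eqref{eq:bd-x-diff0-prox} with $\gamma=\tfrac12$, telescope the $\phi$-terms, shift the momentum piece, apply the Cauchy--Schwarz delay bound $\sum_k\EE\|\vx^{(k)}-\vx^{(k-\tau_k)}\|^2\le\tau^2 S_K$, solve for $S_K$, and substitute back. Your parenthetical about ``$\tau^2$ vs.\ $\tau$'' is apt --- the coefficient arising in the proof is indeed $\tau^2\alpha^2\rho^2/K$, and the $\tau$ in the stated definition of $\tilde\gamma$ appears to be a typo in the paper (compare also Remark~\ref{rm:delay-prox}); similarly, your $\beta^2/\sqrt K$ for the momentum coefficient is the arithmetically correct value of $\beta_k^2$.
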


\begin{proof}
With $\alpha_k = \frac{\alpha}{\sqrt K}$ and $\beta_k = \frac{\beta}{K^{1/4}}$, 
we take full expectation over \eqref{eq:bd-x-diff0-prox} with $\gamma=\frac{1}{2}$ and sum it up over $k=1$ through $K$ to have 
\begin{equation} \label{eq:bd-x-diff0-prox-sum1}
\begin{aligned}
 \textstyle (\frac{1}{2}-\frac{\alpha\rho}{2\sqrt K} - \frac{\beta}{2K^{1/4}}) \sum_{k=1}^K\EE\|\vx^{(k+1)} - \vx^{(k)}\|^2 
\le & ~ \textstyle \frac{\alpha}{\sqrt K}\EE\big(\phi(\vx^{(1)}) - \phi(\vx^{(K+1)}) \big) \\
 & ~ \textstyle \hspace{-5cm}+ \frac{\beta}{2K^{1/4}} \sum_{k=1}^K \EE\|\vx^{(k)}-\vx^{(k-1)}\|^2 +    \frac{\alpha^2}{K} \sum_{k=1}^K\left(\rho^2\EE\|\vx^{(k)} - \vx^{(k-\tau_k)}\|^2 + \sigma^2\right).
\end{aligned}
\end{equation}
Notice that $\vx^{(0)}=\vx^{(1)}$ and by Assumption~\ref{assump:variance}, it holds
\begin{equation}\label{eq:delay-x-tau-bd}
\textstyle \|\vx^{(k)} - \vx^{(k-\tau_k)}\|^2\le \tau \sum_{j=1}^\tau \|\vx^{(k+1-j)} - \vx^{(k-j)}\|^2.
\end{equation} 
Hence, we have from \eqref{eq:bd-x-diff0-prox-sum1} by rearranging terms and using $\phi(\vx)\ge\phi^*, \forall\, \vx\in\dom(r)$ that
\begin{align}\label{eq:bd-x-diff0-prox-sum2}
\textstyle \big(\frac{1}{2}-\frac{\alpha\rho}{2\sqrt K} - \frac{\tau^2\alpha^2\rho^2}{K}- \frac{\beta}{K^{1/4}}\big) \sum_{k=1}^K\EE\|\vx^{(k+1)} - \vx^{(k)}\|^2 
\le  \frac{\alpha}{\sqrt K}\EE\big(\phi(\vx^{(1)}) - \phi^* \big) + \alpha^2\sigma^2.
\end{align}

Therefore, 
\begin{align}\label{eq:bd-x-diff0-prox-sum3}
&~\textstyle\overline{\rho}\sum_{k=1}^{K}\big(  2+\frac{4}{\alpha_{k}(\overline{\rho}-\rho)}  \big) \beta_k^2 \EE\|\vx^{(k)}-\vx^{(k-1)}\|^2 +\frac{\overline{\rho}\rho^2}{2}\sum_{k=1}^{K}\big(   \alpha_{k}^2+\frac{ \alpha_{k} }{\overline{\rho}-\rho} \big) \EE\|\vx^{(k-\tau_k)} - \vx^{(k)}\|^2\cr
\le &~\textstyle \Big( \overline{\rho}\big( 2+\frac{4\sqrt K}{\alpha(\overline{\rho}-\rho)} \big)\frac{\beta}{\sqrt K} + \frac{\tau^2\overline{\rho}\rho^2}{2} \big(\frac{\alpha^2}{K} + \frac{ \alpha }{(\overline{\rho}-\rho)\sqrt K}\big)  \Big) \sum_{k=1}^K\EE\|\vx^{(k)}-\vx^{(k-1)}\|^2\cr
\le &~\textstyle \frac{1}{\gamma}\left(\overline{\rho}\big( 2+\frac{4\sqrt K}{\alpha(\overline{\rho}-\rho)} \big)\frac{\beta}{\sqrt K} + \frac{\tau^2\overline{\rho}\rho^2}{2} \big(\frac{\alpha^2}{K} + \frac{ \alpha }{(\overline{\rho}-\rho)\sqrt K}\big)\right) \big(  \frac{\alpha}{\sqrt K}\EE\big(\phi(\vx^{(1)}) - \phi^* \big)  + \alpha^2\sigma^2\big),
\end{align}
where the first inequality follows from \eqref{eq:delay-x-tau-bd}, and the second inequality is from \eqref{eq:bd-x-diff0-prox-sum2} and the definition of $\tilde\gamma$. Now plug \eqref{eq:bd-x-diff0-prox-sum3} and the choice of $\{\alpha_k\}$ into \eqref{eq:prox-async} to obtain the desired result.
\end{proof}

\begin{remark}\label{rm:delay-prox}
We make a few remarks here about Theorem~\ref{thm:rate-prox-delay}. First, in the proof, we take $\gamma=\frac{1}{2}$ for simplicity while using \eqref{eq:bd-x-diff0-prox}. The analysis goes through for any $\gamma>0$ such that $1-\gamma-\frac{\alpha\rho}{2\sqrt K} - \frac{\tau^2\alpha^2\rho^2}{2\gamma K}- \frac{\beta}{K^{1/4}}>0$. Second, we see from \eqref{eq:prox-async-fix} that a positive $\tau$ will slow down the convergence but its effect will be reduced in an order of ${K^{-\frac{1}{4}}}$. Hence, if $K$ is big enough such that $K^{1/4} \gg \tau$, then the effect caused by the staleness is negligible. 
\end{remark}

The $O(\frac{1}{\sqrt{K}})$ convergence above is established by using a fixed stepsize sequence.  We can show a similar result for the choice of $\alpha_k=\Theta(\frac{1}{\sqrt k})$ by assuming condition~\ref{subgrad-bnd1} of Assumption~\ref{assump:subgrad-bound2}. The proof is given in Appendix~\ref{varying2}.
\begin{theorem}[convergence rate with varying stepsize]\label{thm:rate-prox-delay-vary}
Suppose Assumptions~\ref{assump:unbiased},  \ref{smoothness}, \ref{assump:variance} and \ref{assump:bound-tau}, and also  condition~\ref{subgrad-bnd1} of Assumption~\ref{assump:subgrad-bound2} hold. Let $\overline{\rho}>\rho$, 
 $\alpha_k = \frac{\alpha}{\sqrt{k+a-1}}$ and $\beta_k= \min\big\{\tilde\beta, \frac{\beta}{(k+a-1)^{1/4}}\big\}$, for all $k\ge1$, for some $\alpha>0$, $\beta\ge0$, $\tilde\beta\ge0$, and $a\ge 1$ such that 
\begin{equation}\label{eq:require-para-comp}
\textstyle \tilde\gamma := \frac{1}{2}\big( 1- \frac{\alpha \rho}{\sqrt{a}} - \tilde\beta^2 - \frac{2\tau^2\rho^2\alpha^2}{a}\big)>0.
\end{equation}
Let $\{\vx^{(k)}\}$ be the sequence from Alg.~\ref{alg:async-hvb-sgm}. Then,
\begin{equation}\label{eq:prox-async-vary}
\begin{aligned}
		\textstyle\EE\|\nabla\phi_{1/\overline{\rho}} (\vx^{(T)})\|^2  
		\le&~\textstyle\frac{8\overline{\rho}}{(\overline{\rho}-\rho)\alpha(\sqrt{K+a} - \sqrt{a})}\Bigg[\phi_{1/\overline{\rho}}(\vx^{(1)})- \phi^*+ \frac{\sigma^2\overline{\rho}\alpha^2}{2}  (1+\ln\frac{a+K-1}{a}) \\
&~\textstyle \hspace{-1.5cm}		+ \Big( \overline{\rho}\big( 2\tilde\beta^2 + \frac{4\beta^2}{\alpha(\overline{\rho}-\rho)} \big) + \frac{\tau^2\overline{\rho}\rho^2}{2} \big(\frac{\alpha^2}{a}+ \frac{ \alpha }{\sqrt{a}(\overline{\rho}-\rho)}\big)  \Big) \frac{2}{\tilde\gamma}\left( \alpha_1C_\phi + \sigma^2 \alpha^2  (1+\ln\frac{a+K-1}{a}) \right) \Bigg],
	\end{aligned}    
\end{equation}
where $T$ is randomly selected from $\{1,\ldots, K\}$ by \eqref{eq:def-T} with $k_0=1$.
\end{theorem}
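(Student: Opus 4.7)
The proof plan parallels that of Theorem~\ref{thm:rate-prox-delay}, starting from the generic bound \eqref{eq:prox-async} of Theorem~\ref{thm:prox-async} and then controlling the two nuisance summations $\sum_k \beta_k^2 \EE\|\vx^{(k)}-\vx^{(k-1)}\|^2$ and $\sum_k(\alpha_k^2+\alpha_k/(\overline\rho-\rho))\EE\|\vx^{(k-\tau_k)}-\vx^{(k)}\|^2$ via the one-step drift inequality \eqref{eq:bd-x-diff0-prox}. The main genuinely new ingredient compared to the constant-stepsize case \eqref{eq:bd-x-diff0-prox-sum2} is that $\sum_k\alpha_k(\phi(\vx^{(k)})-\phi(\vx^{(k+1)}))$ is no longer telescoping, so I would invoke condition~\ref{subgrad-bnd1} of Assumption~\ref{assump:subgrad-bound2} together with the Abel-summation trick already used in \eqref{eq:bd-sum-phi} to bound this quantity by $2\alpha_1 C_\phi$.

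Concretely, I would first sum \eqref{eq:bd-x-diff0-prox} over $k=1,\ldots,K$ with a suitable constant $\gamma$ (e.g.\ $\gamma=\tfrac14$), invoke the Abel-summation bound $\sum_k\alpha_k(\phi(\vx^{(k)})-\phi(\vx^{(k+1)}))\le 2\alpha_1 C_\phi$ just mentioned, and handle the delay residual by the crude estimate $\|\vx^{(k)}-\vx^{(k-\tau_k)}\|^2 \le \tau\sum_{j=1}^\tau\|\vx^{(k+1-j)}-\vx^{(k-j)}\|^2$ followed by an index shift and the monotonicity $\alpha_{k+j}\le\alpha_k$ together with the uniform bound $\alpha_k^2\le\alpha^2/a$; this yields $\sum_k\alpha_k^2\|\vx^{(k)}-\vx^{(k-\tau_k)}\|^2 \le (\tau^2\alpha^2/a)\sum_k\|\vx^{(k+1)}-\vx^{(k)}\|^2$, producing the $2\tau^2\rho^2\alpha^2/a$ term of \eqref{eq:require-para-comp}. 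The inertial residual is absorbed by applying Young's inequality to $\beta_k\langle\vx^{(k+1)}-\vx^{(k)},\vx^{(k)}-\vx^{(k-1)}\rangle$ with a weight tuned to $\tilde\beta$, then using $\beta_k\le\tilde\beta$ and another index shift; it is precisely this step that produces the $\tilde\beta^2$ contribution in \eqref{eq:require-para-comp}. After all these manipulations the LHS of the summed inequality has total coefficient exactly $\tilde\gamma$, and combined with $\sum_{k=1}^K \alpha_k^2 \le \alpha^2\bigl(1+\ln\tfrac{a+K-1}{a}\bigr)$ I obtain the varying-stepsize analogue of \eqref{eq:bd-x-diff0-prox-sum2},
\[ \textstyle \tilde\gamma \sum_{k=1}^K \EE\|\vx^{(k+1)}-\vx^{(k)}\|^2 \le 2\alpha_1 C_\phi + \sigma^2\alpha^2\bigl(1+\ln\tfrac{a+K-1}{a}\bigr). \]

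Next I would bound the two summations inside \eqref{eq:prox-async} exactly as in the proof of Theorem~\ref{thm:rate-prox-delay}, using the varying-stepsize estimates: for the inertial sum I use $\beta_k^2\le\tilde\beta^2$ together with $\beta_k^2/\alpha_k\le\beta^2/\alpha$ (since $\beta_k^2\le\beta^2/\sqrt{k+a-1}$ while $1/\alpha_k = \sqrt{k+a-1}/\alpha$) to extract the factor $2\tilde\beta^2+4\beta^2/(\alpha(\overline\rho-\rho))$ in \eqref{eq:prox-async-vary}; for the delay sum I reapply the index-shift argument above together with $\alpha_k^2+\alpha_k/(\overline\rho-\rho)\le\alpha^2/a+\alpha/(\sqrt{a}(\overline\rho-\rho))$ to produce the corresponding $\tau^2$-factor. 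Plugging these two bounds into \eqref{eq:prox-async}, together with the elementary integral estimates $\sum_{k=1}^K\alpha_k \ge \alpha\int_1^{K+1}\tfrac{dx}{\sqrt{x+a-1}} = 2\alpha(\sqrt{K+a}-\sqrt{a})$ and $\sum_{k=1}^K\alpha_k^2\le\alpha^2(1+\ln\tfrac{a+K-1}{a})$, immediately delivers the claimed rate \eqref{eq:prox-async-vary}.

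The main obstacle is the coefficient bookkeeping in the absorption step, particularly arranging the Young's-inequality weights and the index shifts (which rely on $\vx^{(0)}=\vx^{(1)}$ and on the monotonicity of both $\{\alpha_k\}$ and $\{\beta_k\}$) so that the combined coefficient of $\sum_k\|\vx^{(k+1)}-\vx^{(k)}\|^2$ matches the quadratic form of $\tilde\gamma$ in \eqref{eq:require-para-comp}. Once this clean absorption is carried out, everything else is an integration estimate and a direct substitution into \eqref{eq:prox-async}.
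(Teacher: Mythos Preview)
Your proposal is correct and matches the paper's approach: the paper likewise re-derives the one-step drift inequality with the asymmetric Young split $\beta_k\langle\vx^{(k+1)}-\vx^{(k)},\vx^{(k)}-\vx^{(k-1)}\rangle\le\tfrac14\|\vx^{(k+1)}-\vx^{(k)}\|^2+\beta_k^2\|\vx^{(k)}-\vx^{(k-1)}\|^2$ (and a similar asymmetric split on the $\vg^{(k)}-\nabla F(\vx^{(k)})$ term), then applies the Abel-summation bound \eqref{eq:bd-sum-phi}, the delay index-shift with $\alpha_k$-monotonicity, and substitutes into \eqref{eq:prox-async} with the integral estimates of Lemma~\ref{lem:sum-alpha-bnd}, exactly as you outline. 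One wording caveat: you cannot literally ``sum \eqref{eq:bd-x-diff0-prox}'' and then ``redo Young's on $\beta_k\langle\cdot,\cdot\rangle$'', since \eqref{eq:bd-x-diff0-prox} already has the symmetric split $\tfrac{\beta_k}{2}+\tfrac{\beta_k}{2}$ baked in (which would produce $\tilde\beta$ rather than $\tilde\beta^2$ in the absorbed coefficient); what you intend---and what the paper does---is to return to the precursor \eqref{eq:bd-x-diff-prox} and apply the asymmetric weights there.
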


\begin{remark}
When there is no delay, i.e., $\tau=0$, we can choose $a=1$ and obtain a convergence rate of $\tilde \Theta(\frac{1}{\sqrt{K}})$. When there is delay, i.e., $\tau\ge 1$, \eqref{eq:prox-async-vary} with $a=\Theta(\tau^4)$, which can ensure \eqref{eq:require-para-comp}, gives a rate of $\widetilde{\Theta}(\frac{1}{\sqrt{K+\tau^4}-\sqrt{\tau^4}})=\widetilde{\Theta}\Big( \frac{1}{\sqrt{K}}(\sqrt{1+\frac{\tau^4}{K}} +\sqrt{\frac{\tau^4}{K}})\Big) $. In this case, the delay will have a negligible effect on the convergence speed if $\tau=o(K^{\frac{1}{4}})$.
\end{remark}

\section{Convergence analysis for smooth nonconvex problems}\label{sec:smooth}
\newcommand{\vGam}{{\mathbf{\Gamma}}}
\newcommand{\vPhi}{{\mathbf{\Phi}}}
\newtheorem{fact}{Fact}

In this section, we consider the case where $r=0$, i.e., a non-regularized smooth problem. 
For this special case, we are able to show a stronger result under the same assumptions as we used in section~\ref{sec:prox-async}, in the sense that the delay has a weaker effect on the convergence speed. However, the analysis is significantly different from those in the previous two sections. 
Throughout this section, we let 
\begin{equation}\label{eq:special-set}
\textstyle \beta_k=\frac{\alpha_{k}}{\alpha_{k-1}}\beta,\text{ for all }k\ge 1\text{ and for some }\beta\in(0,1).
\end{equation} Then the update in \eqref{eq:update-x-2} reduces to \eqref{eq:update-x} with $\vm$-vectors defined in \eqref{eq:update-m}. We declare the following notation, as they appear extensively in this section: 
\begin{equation}\label{def:uk}
	\vu^{(k)}=\nabla F(\vx^{(k-\tau_k)} ) 
	\text{ and } u_k=\EE\|\vu^{(k)}\|^2 \text{ for all } k\ge 1.
\end{equation}


With the setting in \eqref{eq:special-set}, we define the following quantities that are critical for bounding the staleness:
\begin{equation}\label{def:theta}
\textstyle	\theta_{k,j}=\sum_{l=0}^{ \min\{\tau_k-1,\\k-j-1\}}\alpha_{k-l-1} \beta^{k-j-l-1}, \text{ and }  \pi_{k,j}(t)=\sum_{l=0}^{\min\{\tau_k-1,\\k-j-1\}} t^{k-j-l-1}.
\end{equation}

\begin{lemma}\label{lem:pi-bnd}
	Let $t\in (0,1)$, we have the following results: 
	\begin{equation}\label{eq:pi-sum1}
	\textstyle	 \pi_{k,j}(t)
	=\begin{cases}
		\frac{1-t^{k-j}}{1-t}  &\text{ if  } j\ge k-\tau_k+1 ,\\[0.1cm]
		\frac{1-t^{\tau_k}}{1-t}t^{k-\tau_k-j}   &\text{ if  } j\le k-\tau_k ;
	\end{cases} 
\quad \sum_{j=1}^{k-1} \pi_{k,j}(t)\le \frac{ \tau}{ 1-t };\quad \sum_{j=1}^{k-1} \pi_{k,j}^2(t)\le\frac{ \tau}{ (1-t)^2}.
	\end{equation}
\end{lemma}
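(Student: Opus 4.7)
My plan is to prove the three assertions in turn, starting with the closed form for $\pi_{k,j}(t)$, which then makes both summation bounds essentially immediate.

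First, I would derive the closed form by splitting on the two cases determined by the minimum in the definition of $\pi_{k,j}(t)$. When $j\ge k-\tau_k+1$, we have $k-j-1<\tau_k$, so $\min\{\tau_k-1,k-j-1\}=k-j-1$; re-indexing via $m=k-j-l-1$ turns the sum into $\sum_{m=0}^{k-j-1}t^m=(1-t^{k-j})/(1-t)$. When $j\le k-\tau_k$, we have $\tau_k-1\le k-j-1$, so $\min\{\tau_k-1,k-j-1\}=\tau_k-1$, and the sum becomes $\sum_{l=0}^{\tau_k-1}t^{k-j-l-1}=t^{k-\tau_k-j}\bigl(1+t+\cdots+t^{\tau_k-1}\bigr)$, which simplifies to the claimed expression. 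In either case, since the geometric sum $1+t+\cdots+t^{\tau_k-1}\le 1/(1-t)$, we obtain the pointwise bound $\pi_{k,j}(t)\le 1/(1-t)$, which I will use below.

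Next, for the bound on $\sum_{j=1}^{k-1}\pi_{k,j}(t)$, I would swap the order of summation. Writing out the double sum and letting $m=k-j-l-1$, the quantity becomes $\sum_{m\ge 0}N_{k,m}\,t^m$, where $N_{k,m}$ counts the pairs $(j,l)$ with $1\le j\le k-1$, $0\le l\le\min\{\tau_k-1,k-j-1\}$, and $k-j-l-1=m$. For each such $m$, $l$ is determined by $j$ as $l=k-j-m-1$, and the feasibility constraints translate to $j\in[\max(1,k-m-\tau_k),\,k-m-1]$, an interval of length at most $\tau_k$. Hence $N_{k,m}\le\tau_k\le\tau$, so $\sum_{j=1}^{k-1}\pi_{k,j}(t)\le\tau\sum_{m\ge 0}t^m=\tau/(1-t)$.

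Finally, for the squared sum, I would bootstrap from the previous two steps: using the pointwise bound $\pi_{k,j}(t)\le 1/(1-t)$ gives $\pi_{k,j}^2(t)\le\pi_{k,j}(t)/(1-t)$, and summing over $j$ yields $\sum_{j=1}^{k-1}\pi_{k,j}^2(t)\le(1-t)^{-1}\cdot\tau/(1-t)=\tau/(1-t)^2$. There is no real obstacle here; the only subtlety is the careful bookkeeping in the interchange of summation, namely verifying that for each exponent $m$ at most $\tau_k$ pairs $(j,l)$ contribute. Once that is done, both summation bounds follow directly, and the pointwise bound on $\pi_{k,j}(t)$ makes the quadratic bound a one-line consequence of the linear one.
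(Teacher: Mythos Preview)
Your proof is correct. The derivation of the closed form matches the paper's, but your treatment of the two summation bounds is genuinely different from the paper's.

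For $\sum_{j=1}^{k-1}\pi_{k,j}(t)$, the paper plugs in the closed form, splits the sum into the two ranges $j\ge k-\tau_k+1$ and $j\le k-\tau_k$, computes each piece exactly, and obtains the closed expression $\bigl(\tau_k(1-t)-t^{k-\tau_k}(1-t^{\tau_k})\bigr)/(1-t)^2$, from which the bound $\tau/(1-t)$ is read off. Your interchange-of-summation argument, counting that each exponent $m$ is hit by at most $\tau_k$ pairs $(j,l)$, yields the bound directly without computing the exact value; it is shorter and more conceptual, though it sacrifices the exact formula. For $\sum_{j=1}^{k-1}\pi_{k,j}^2(t)$, the paper again substitutes the closed form, evaluates the resulting sums explicitly, and then bounds; your bootstrap via the pointwise estimate $\pi_{k,j}(t)\le 1/(1-t)$ reduces the quadratic bound to the already-proved linear one in a single line. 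Both approaches are valid; yours trades exact formulas for cleaner, more reusable inequalities.
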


\begin{lemma}\label{lem:hatgap-dist}
Let $\{\vx^{(k)}\}_{k\ge 1} $ and $\{\vm^{(k)}\}_{k\ge 1} $ be generated from \eqref{eq:update-x} and \eqref{eq:update-m}. Under Assumptions~\ref{assump:unbiased} and \ref{assump:variance},  it holds for $k\ge 1,$
\begin{equation}\label{vm-2mom}
	\textstyle\EE \|\vm^{(k)}\|^2\le  (1-\beta) \sum_{j=1}^k\beta^{k-j} u_j +(1-\beta)^2\sum_{j=1}^k\beta^{2(k-j)} u_j  + \frac{(1-\beta)^2}{1-\beta^2}    \sigma^2,
\end{equation}
	\begin{equation}\label{hatgap-2mom}
	\textstyle	\EE \|\vx^{(k-\tau_k)}- \vx^{(k)}\|^2\le    \sum_{l=1}^{k-1}\theta_{k,l}\sum_{j=1}^{k-1}\theta_{k,j} u_j  +\sum_{j=1}^{k-1}\theta_{k,j}^{2} u_j +       \sigma^2\sum_{j=1}^{k-1}\theta_{k,j}^{2}.
	\end{equation}
\end{lemma}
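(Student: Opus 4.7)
My plan is to reduce both inequalities to a single abstract estimate on weighted sums of stochastic subgradients, and then to bound that estimate via a deterministic/martingale decomposition. I would first derive closed-form expressions for the two quantities as weighted sums of $\{\vg^{(j)}\}$. Unrolling the momentum recursion $\vm^{(k)} = \beta\vm^{(k-1)} + (1-\beta)\vg^{(k)}$ with $\vm^{(0)} = \vzero$ gives $\vm^{(k)} = (1-\beta)\sum_{j=1}^k \beta^{k-j}\vg^{(j)}$. Writing $\vx^{(k)} - \vx^{(k-\tau_k)} = -\sum_{l=0}^{\tau_k - 1}\frac{\alpha_{k-l-1}}{1-\beta}\vm^{(k-l-1)}$ via the update rule, substituting the closed form for each $\vm^{(k-l-1)}$, and swapping the order of summation over $l$ and $j$ yields $\vx^{(k)} - \vx^{(k-\tau_k)} = -\sum_{j=1}^{k-1}\theta_{k,j}\vg^{(j)}$, using the definition in \eqref{def:theta}. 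Both targets therefore take the form $\vs = \sum_j a_j \vg^{(j)}$ with nonnegative weights $a_j$: $a_j = (1-\beta)\beta^{k-j}$ for the momentum bound and $a_j = \theta_{k,j}$ for the delayed-iterate gap bound.

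Next I would split each $\vg^{(j)} = \vu^{(j)} + \ve^{(j)}$, where by Assumption~\ref{assump:unbiased} the residual $\ve^{(j)} := \vg^{(j)} - \vu^{(j)}$ is a martingale difference with respect to the natural filtration $\{\mathcal{F}_j\}$, and by Assumption~\ref{assump:variance} satisfies $\EE\|\ve^{(j)}\|^2 \le \sigma^2$. Writing $\vs = A + B$ with $A := \sum_j a_j \vu^{(j)}$ and $B := \sum_j a_j \ve^{(j)}$, I expand $\|\vs\|^2 = \|A\|^2 + 2\langle A, B\rangle + \|B\|^2$. Cauchy--Schwarz yields $\|A\|^2 \le (\sum_l a_l)\sum_j a_j \|\vu^{(j)}\|^2$, so $\EE\|A\|^2 \le (\sum_l a_l)(\sum_j a_j u_j)$, which specializes to $(1-\beta)\sum_j \beta^{k-j}u_j$ (using $\sum_l a_l \le 1$) and $\sum_l \theta_{k,l}\sum_j \theta_{k,j}u_j$, matching the first summand in each bound. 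Orthogonality of martingale increments (since $\ve^{(i)} \in \mathcal{F}_{j-1}$ and $\EE[\ve^{(j)} \mid \mathcal{F}_{j-1}] = 0$ for $i<j$) gives $\EE\|B\|^2 = \sum_j a_j^2 \EE\|\ve^{(j)}\|^2 \le \sigma^2\sum_j a_j^2$, which evaluates to $\frac{(1-\beta)^2 \sigma^2}{1-\beta^2}$ and $\sigma^2\sum_j \theta_{k,j}^2$, producing the noise terms in each statement.

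The remaining piece, $2\EE\langle A, B\rangle = 2\sum_{i,j} a_i a_j \EE\langle \vu^{(i)}, \ve^{(j)}\rangle$, should account for the extra $\sum_j a_j^2 u_j$ summand in each bound. Because $\vu^{(i)} = \nabla F(\vx^{(i-\tau_i)})$ is $\mathcal{F}_{i-\tau_i -1}$-measurable, every summand with $i - \tau_i \le j$ vanishes by the tower property, so only pairs with $j < i - \tau_i$ (those genuinely affected by the delay) survive. On this residual index set I would apply a Young-type inequality (for instance $2\langle a_j \vu^{(i)}, a_i \ve^{(j)}\rangle \le a_j^2 \|\vu^{(i)}\|^2 + a_i^2 \|\ve^{(j)}\|^2$, or a weighted variant with parameter $\epsilon$ tuned to the geometric/$\theta$-shaped decay of the weights) and sum carefully so that the aggregate collapses into $(1-\beta)^2 \sum_j \beta^{2(k-j)}u_j$ and $\sum_j \theta_{k,j}^2 u_j$. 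The hard part is precisely this cross-term step: the delay couples the noise $\ve^{(j)}$ to later conditional means $\vu^{(i)}$ through the iterate dynamics, so one cannot simply invoke the clean one-sided orthogonality argument on all pairs, and the pairing must be chosen so that the surviving sum does not exceed the clean diagonal term $\sum_j a_j^2 u_j$ stated in the lemma.
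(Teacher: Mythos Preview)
Your reduction of both quantities to a single weighted sum $\sum_j a_j \vg^{(j)}$ is exactly how the paper proceeds (it isolates this as a separate lemma with generic nonnegative weights and then specializes to $a_j=(1-\beta)\beta^{k-j}$ and $a_j=\theta_{k,j}$). Where you diverge is in decomposing that sum. You center each $\vg^{(j)}$ at its \emph{conditional} mean $\vu^{(j)}$, so $\vs=A+B$ with $A=\sum_j a_j\vu^{(j)}$ random and a cross term $2\EE\langle A,B\rangle$ left to control. The paper instead centers at the \emph{full} expectation, using the identity $\EE\|\vs\|^2=\|\EE\vs\|^2+\EE\|\vs-\EE\vs\|^2$, which carries no cross term by construction. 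In that argument the summand $\sum_j a_j^2 u_j$ is not a cross-term residue at all: after asserting that the centered vectors $\vg^{(j)}-\EE\vg^{(j)}$ are pairwise uncorrelated, the variance is $\sum_j a_j^2\,\EE\|\vg^{(j)}-\EE\vg^{(j)}\|^2$, and each term splits (by conditioning on $\mathcal{F}_{j-1}$) as $\EE\|\ve^{(j)}\|^2+\EE\|\vu^{(j)}-\EE\vu^{(j)}\|^2\le\sigma^2+u_j$. So $\sum_j a_j^2 u_j$ is simply the second-moment bound on the variance of the random conditional mean $\vu^{(j)}$.

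Your plan to recover $\sum_j a_j^2 u_j$ from $2\EE\langle A,B\rangle$ via a Young-type inequality does not close. First, the parenthetical ``genuinely affected by the delay'' is misleading: even with $\tau_i\equiv 0$ every pair $j<i$ survives, since $\vu^{(i)}=\nabla F(\vx^{(i)})$ already depends on $\xi_j$ through the iterate dynamics---the coupling is intrinsic, not a delay artifact. Second, Young on a surviving pair produces terms like $a_j^2\|\vu^{(i)}\|^2$ or $a_i^2\|\ve^{(j)}\|^2$; after summation these carry the wrong index pairing (e.g.\ $a_j^2 u_i$ with $i>j$, or a second copy of $\sigma^2\sum_i a_i^2$ already spent on $\EE\|B\|^2$) and do not collapse to the clean diagonal $\sum_j a_j^2 u_j$. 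Concretely, in dimension one with $\vx^{(1)}=0$, $\tau\equiv 0$, and $\nabla f(x;\xi)=-x+\xi$, one gets $u_1=0$, $\vu^{(2)}=\alpha_1\xi_1$, $\ve^{(1)}=\xi_1$, hence $2\EE\langle A,B\rangle=2a_1a_2\alpha_1\sigma^2$ while $\sum_j a_j^2 u_j=a_2^2\alpha_1^2\sigma^2$, and the former exceeds the latter whenever $2a_1>a_2\alpha_1$. The paper sidesteps this entirely by centering at the full expectation, so that the randomness of $\vu^{(j)}$ lands inside the variance piece rather than in a cross term you must separately estimate.
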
 

In the remaining analysis, we follow the analytical framework of \cite{yan2016unified}. We define an auxiliary sequence $\vz^{(k)}$ as follows: 
\begin{equation}\label{defzk}
\textstyle	\vz^{(k)}=\vx^{(k)}+\frac{\beta}{1-\beta}(\vx^{(k)}-\vx^{(k-1)})=\frac{1}{1-\beta}\vx^{(k)}-\frac{\beta}{1-\beta}\vx^{(k-1)},\, \forall\, k\ge1.
\end{equation}
Recall $\vx^{(0)}=\vx^{(1)}$, so clearly, $\vz^{(1)}=\vx^{(1)}.$

\begin{lemma}\label{lem:zz}
	Let $\vz^{(k)}$ be defined as in \eqref{defzk} and $\alpha_0=\alpha_1.$ We have for $k\ge 1$,
	\begin{equation}\label{zzandxx}
	\textstyle	\vz^{(k+1)}-\vz^{(k)}=\frac{\beta}{1-\beta}(1 - \alpha_{k}  / \alpha_{k-1}   )(\vx^{(k-1)}-\vx^{(k)})-\frac{\alpha_{k}}{1-\beta} \vg^{(k)},
	\end{equation}
	and 
	\begin{equation}\label{eq:zxlip}
	\textstyle	\|\nabla F(\vz^{(k)})-\nabla F(\vx^{(k)})\|\leq \frac{\rho\beta }{1-\beta} \|\vx^{(k-1)}-\vx^{(k)}\|.
	\end{equation}
\end{lemma}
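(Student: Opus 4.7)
The plan is to prove both identities by direct calculation starting from the definitions, using the update rule and the special parameter setting \eqref{eq:special-set}.

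For \eqref{zzandxx}, I will first write out $\vz^{(k+1)}$ and $\vz^{(k)}$ using \eqref{defzk}, and take their difference to obtain
\[
\vz^{(k+1)}-\vz^{(k)} = \frac{1}{1-\beta}(\vx^{(k+1)}-\vx^{(k)}) - \frac{\beta}{1-\beta}(\vx^{(k)}-\vx^{(k-1)}).
\]
Since we are in the case $r\equiv 0$, the proximal update \eqref{eq:update-x-2} reduces to $\vx^{(k+1)}=\vx^{(k)}-\alpha_k \vg^{(k)}+\beta_k(\vx^{(k)}-\vx^{(k-1)})$. Plugging in $\beta_k=\frac{\alpha_k}{\alpha_{k-1}}\beta$ from \eqref{eq:special-set} and substituting the resulting expression for $\vx^{(k+1)}-\vx^{(k)}$ into the display above, the $\vg^{(k)}$ term produces $-\frac{\alpha_k}{1-\beta}\vg^{(k)}$, while combining the two $(\vx^{(k)}-\vx^{(k-1)})$ contributions gives $\frac{\beta}{1-\beta}\bigl(\frac{\alpha_k}{\alpha_{k-1}}-1\bigr)(\vx^{(k)}-\vx^{(k-1)})=\frac{\beta}{1-\beta}\bigl(1-\frac{\alpha_k}{\alpha_{k-1}}\bigr)(\vx^{(k-1)}-\vx^{(k)})$. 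This yields \eqref{zzandxx} directly.

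For \eqref{eq:zxlip}, I will use the alternative form of \eqref{defzk}, namely $\vz^{(k)}-\vx^{(k)}=\frac{\beta}{1-\beta}(\vx^{(k)}-\vx^{(k-1)})$, so that $\|\vz^{(k)}-\vx^{(k)}\|=\frac{\beta}{1-\beta}\|\vx^{(k-1)}-\vx^{(k)}\|$. Then applying the $\rho$-smoothness of $F$ (Assumption~\ref{smoothness}) gives
\[
\|\nabla F(\vz^{(k)})-\nabla F(\vx^{(k)})\| \le \rho \|\vz^{(k)}-\vx^{(k)}\| = \frac{\rho\beta}{1-\beta}\|\vx^{(k-1)}-\vx^{(k)}\|.
\]

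Both steps are essentially bookkeeping; there is no genuine obstacle. The only point requiring care is tracking the sign conventions and the role of the convention $\alpha_0=\alpha_1$, which makes the identity \eqref{zzandxx} also valid at $k=1$ (where $\vx^{(0)}=\vx^{(1)}$ forces the first term to vanish and is consistent with $\vz^{(1)}=\vx^{(1)}$).
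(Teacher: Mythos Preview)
Your proof is correct and follows essentially the same approach as the paper: both start from the identity $\vz^{(k+1)}-\vz^{(k)}=\frac{1}{1-\beta}(\vx^{(k+1)}-\vx^{(k)})-\frac{\beta}{1-\beta}(\vx^{(k)}-\vx^{(k-1)})$ and substitute the update rule, and both derive \eqref{eq:zxlip} immediately from \eqref{defzk} and $\rho$-smoothness. The only cosmetic difference is that the paper routes the computation through the momentum form \eqref{eq:update-x}--\eqref{eq:update-m} (expanding $\vm^{(k)}=\beta\vm^{(k-1)}+(1-\beta)\vg^{(k)}$) whereas you substitute \eqref{eq:update-x-2} with $\beta_k=\frac{\alpha_k}{\alpha_{k-1}}\beta$ directly, which is slightly shorter.
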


Now we are ready to show the main result. We first show the convergence by imposing general conditions on $\{\alpha_k\}$ and then specify the choice of the parameters that satisfies the imposed conditions.

\begin{theorem}\label{thm:ncv}
	Given a maximum number $K $ of iterations, let $\{\vx^{(k)}\}_{k=1}^K$ 
	be generated from Alg.~\ref{alg:async-hvb-sgm} with a non-increasing positive sequence $\{\alpha_{k}\}_{k=1}^{K}$. 
	Let $\bar\vx^{(K)} $ be drawn from $\{\vx^{(k)}\}_{k=1}^{K}$ with probability 
	\begin{equation}\label{eq:def-xK}
	\textstyle \Prob(\bar\vx^{(K)}= \vx^{(k)}) = \frac{\alpha_{k}}{\sum_{j=1}^{K}\alpha_{j}},\,\forall k=1,\ldots, K.
	\end{equation}
	Under Assumptions \ref{assump:unbiased} and \ref{smoothness}--\ref{assump:bound-tau}, if for all  $k\ge 2,$
	\begin{equation}\label{eq:vanish}
	\textstyle	(1-\alpha_{k}/\alpha_{k-1})^2\leq\frac{\alpha_{k}}{2(1-\beta)},
	\end{equation}
	and for all $j\ge1,$
	\begin{equation}\label{eq:main-cond}
	\textstyle	    \frac{3\rho\alpha_j}{1-\beta}    +    \rho^2    \left[ \frac{\tau(\tau-1)\alpha_1\alpha_{j}}{(1-\beta)^2} +\frac{ (\tau-1) \alpha_{j}^2}{(1-\beta)^2} 
		    + \frac{\tau\alpha_j^2}{(1-\beta)^3} + \frac{ \alpha_j^2}{(1-\beta)^2(1-\beta^2)} 
		    \right] 		
		+   \frac{2( 1+5\rho)  \beta^2}{ (1-\beta)^2(1-\beta^2) }  \alpha_{j} \le  1,
	\end{equation}
	then it holds 
	\begin{equation}\label{eq:main-ncv}
	\begin{aligned}
		\textstyle\EE\|\nabla F(\bar\vx^{(K)})\|^2  
		\le  \frac{4 \sigma^2}{ (1-\beta) \sum_{k=1}^K\alpha_{k}}\Big[ & \textstyle \frac{  \rho^2\tau}{2(1-\beta)}\sum_{k=1}^K\alpha_{k}\alpha_{\max\{k-\tau_k,1\}}^2
		+     \frac{(1+5\rho)\beta^2}{2(1-\beta^2 )} \sum_{k=1}^K \alpha_{k-1}^2  
		\\
		  & \textstyle + \rho\sum_{k=1}^K\alpha_{k}^2 \Big] + \frac{4(1-\beta)\left[ F(\vx^{(1)})-\inf_{\vx} F(\vx)\right] }{ \sum_{k=1}^K\alpha_{k}}. 
		\end{aligned}
	\end{equation}
\end{theorem}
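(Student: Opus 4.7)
The plan is to carry out a descent analysis along the auxiliary sequence $\vz^{(k)}$ defined in \eqref{defzk}, following the framework of \cite{yan2016unified} but adapted to accommodate the delayed stochastic gradient $\vg^{(k)} = \tilde\nabla f(\vx^{(k-\tau_k)};\xi_k)$. Applying $\rho$-smoothness of $F$ to $F(\vz^{(k+1)}) - F(\vz^{(k)})$ and substituting \eqref{zzandxx} yields
\begin{equation*}
\textstyle F(\vz^{(k+1)}) \le F(\vz^{(k)}) + \frac{\beta}{1-\beta}(1-\alpha_k/\alpha_{k-1})\langle \nabla F(\vz^{(k)}), \vx^{(k-1)}-\vx^{(k)}\rangle - \frac{\alpha_k}{1-\beta}\langle \nabla F(\vz^{(k)}),\vg^{(k)}\rangle + \frac{\rho}{2}\|\vz^{(k+1)}-\vz^{(k)}\|^2.
\end{equation*}
I would then split $\langle \nabla F(\vz^{(k)}), \vg^{(k)}\rangle$ as $\langle \nabla F(\vx^{(k)}),\vg^{(k)}\rangle + \langle \nabla F(\vz^{(k)}) - \nabla F(\vx^{(k)}), \vg^{(k)}\rangle$, so that after taking expectation the first inner product produces $\langle \nabla F(\vx^{(k)}), \vu^{(k)}\rangle$ (via Assumption~\ref{assump:unbiased}), which is then written as $\langle \nabla F(\vx^{(k)}), \nabla F(\vx^{(k)})\rangle + \langle \nabla F(\vx^{(k)}), \vu^{(k)} - \nabla F(\vx^{(k)})\rangle$; this is the standard trick to extract a $\|\nabla F(\vx^{(k)})\|^2$ term while isolating the delay-induced error. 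The second inner product is controlled by \eqref{eq:zxlip} combined with Young's inequality.

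Next I would bound all remaining quadratic and cross terms. The term $\frac{\rho}{2}\|\vz^{(k+1)}-\vz^{(k)}\|^2$ splits into a piece proportional to $(1-\alpha_k/\alpha_{k-1})^2\|\vx^{(k)}-\vx^{(k-1)}\|^2$ --- which condition \eqref{eq:vanish} makes comparable to $\alpha_k \|\vm^{(k-1)}\|^2/(1-\beta)^2$ using $\vx^{(k)}-\vx^{(k-1)} = -\alpha_{k-1}\vm^{(k-1)}/(1-\beta)$ --- and a piece proportional to $\alpha_k^2\|\vg^{(k)}\|^2/(1-\beta)^2$, bounded via $\EE\|\vg^{(k)}\|^2 \le u_k + \sigma^2$. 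The delay errors $\|\nabla F(\vx^{(k)}) - \vu^{(k)}\| \le \rho\|\vx^{(k)}-\vx^{(k-\tau_k)}\|$ and $\|\nabla F(\vz^{(k)}) - \vu^{(k)}\|$ are then controlled by Lemma~\ref{lem:hatgap-dist}, which expresses $\EE\|\vx^{(k-\tau_k)}-\vx^{(k)}\|^2$ and $\EE\|\vm^{(k)}\|^2$ as weighted sums of $u_j = \EE\|\vu^{(j)}\|^2$ with weights $\theta_{k,j}$ or $\beta^{k-j}$.

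After taking full expectation, rearranging, and telescoping over $k=1,\dots,K$, I expect to arrive at an inequality of the schematic form
\begin{equation*}
\textstyle \frac{1}{1-\beta}\sum_{k=1}^K \alpha_k \EE\|\nabla F(\vx^{(k)})\|^2 \le 2[F(\vx^{(1)}) - \inf F] + \sum_{j=1}^K A_j\, u_j + \sigma^2 \cdot (\text{noise terms}),
\end{equation*}
where $A_j$ gathers the contributions coming from (i) the momentum accumulation $\sum_k \beta^{2(k-j)}$, (ii) the delay accumulation $\sum_k \theta_{k,j}^2$ and $\sum_k \theta_{k,j}\theta_{k,l}$, and (iii) the quadratic $\alpha_k^2$ term. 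At this point I would swap the order of summation and apply the geometric-sum identities in Lemma~\ref{lem:pi-bnd} with $t=\beta$, together with $\theta_{k,j} \le \alpha_{\min\{k-\tau_k,j\}}\pi_{k,j}(\beta)$ and monotonicity of $\{\alpha_k\}$, to upper bound each $A_j$ by exactly the left-hand side of condition \eqref{eq:main-cond} times $\alpha_j$. Condition \eqref{eq:main-cond} is then what allows absorbing the entire $\sum_j A_j u_j$ term into $\frac{1}{2(1-\beta)}\sum_j \alpha_j u_j$ on the left-hand side (after rewriting $\EE\|\nabla F(\vx^{(k)})\|^2$ in terms of $u_k$ modulo a further Lipschitz error), leaving only the residual $\sigma^2$ noise contributions displayed in \eqref{eq:main-ncv}.

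The main obstacle will be the bookkeeping in step three: the staleness couples iterate $k$ to all iterates $j \in [k-\tau_k, k-1]$ through both $\vx^{(k-\tau_k)}-\vx^{(k)}$ and $\vu^{(k)}$, so the double sums $\sum_k \alpha_k \sum_j \theta_{k,j}\theta_{k,l}$ must be carefully collapsed using $\sum_j \pi_{k,j}(\beta) \le \tau/(1-\beta)$ and $\alpha_{k-l} \le \alpha_{\max\{k-\tau_k,1\}}$ so that the final coefficient of $u_j$ is provably no larger than what \eqref{eq:main-cond} permits; getting the factors $\tau$, $(1-\beta)^{-1}$, and $\alpha$ to line up exactly with \eqref{eq:main-cond} is where the delicacy lies.
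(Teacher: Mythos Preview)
Your proposal is essentially the same approach as the paper's proof: descent along $\vz^{(k)}$ via \eqref{zzandxx}--\eqref{eq:zxlip}, conditional expectation to produce $\langle\nabla F(\vx^{(k)}),\vu^{(k)}\rangle$, substitution of Lemma~\ref{lem:hatgap-dist}, telescoping, swap of summation, and then Lemma~\ref{lem:pi-bnd} plus monotonicity of $\{\alpha_k\}$ to collapse the $\theta_{k,j}$-sums so that \eqref{eq:main-cond} kills the $u_j$ terms. Two small corrections worth noting. First, the paper handles $\langle\nabla F(\vx^{(k)}),\vu^{(k)}\rangle$ via the polarization identity $\va^\top\vb=\tfrac12(\|\va\|^2+\|\vb\|^2-\|\va-\vb\|^2)$, which yields $-\tfrac{\alpha_k}{2(1-\beta)}\|\nabla F(\vx^{(k)})\|^2$ and $-\tfrac{\alpha_k}{2(1-\beta)}u_k$ simultaneously; this makes your ``rewriting $\EE\|\nabla F(\vx^{(k)})\|^2$ in terms of $u_k$'' unnecessary and is what produces the exact constants in \eqref{eq:main-cond}--\eqref{eq:main-ncv}. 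Second, condition~\eqref{eq:vanish} is used in the paper not on the $\|\vx^{(k)}-\vx^{(k-1)}\|^2$ piece of $\|\vz^{(k+1)}-\vz^{(k)}\|^2$ (there the factor $(1-\alpha_k/\alpha_{k-1})^2\le 1$ is simply dropped) but to control the $\tfrac12(1-\alpha_k/\alpha_{k-1})^2\|\nabla F(\vx^{(k)})\|^2$ term that appears from Cauchy--Schwarz on $\nabla F(\vx^{(k)})^\top(\vx^{(k-1)}-\vx^{(k)})$, so that it absorbs into the main $-\tfrac{\alpha_k}{2(1-\beta)}\|\nabla F(\vx^{(k)})\|^2$ term. (Also, the bound on $\theta_{k,j}$ should read $\alpha_{\max\{k-\tau_k,j\}}\pi_{k,j}(\beta)$, not $\min$.)
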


\begin{proof}
	By the $\rho$-smoothness of $F,$ it follows from \eqref{eq:ineq-smooth} that
	\begin{align}\label{eq:break}
		0\leq &~\textstyle F(\vz^{(k)})-F(\vz^{(k+1)})  + \nabla F(\vz^{(k)})^{\top} (\vz^{(k+1)}-\vz^{(k)}) + \frac{\rho}{2}\|\vz^{(k+1)}-\vz^{(k)}\|^2\nonumber\\
		= &~\textstyle F(\vz^{(k)})-F(\vz^{(k+1)}) +\nabla F(\vx^{(k)})^{\top} (\vz^{(k+1)}-\vz^{(k)}) \nonumber\\
		&~\textstyle+ (\nabla F(\vz^{(k)})-\nabla F(\vx^{(k)}))^{\top} (\vz^{(k+1)}-\vz^{(k)}) + \frac{\rho}{2}\|\vz^{(k+1)}-\vz^{(k)}\|^2. 
	\end{align}
Taking the conditional expectation and using \eqref{zzandxx} and Assumption~\ref{assump:unbiased}, we have from \eqref{eq:break} that 
	\begin{align}\label{eq:break2}
		0\leq &~\textstyle\EE_k[ F(\vz^{(k)})-F(\vz^{(k+1)})]  + \nabla F(\vx^{(k)})^{\top} \big(\frac{\beta}{1-\beta} (1 - \alpha_{k}  / \alpha_{k-1}   )(\vx^{(k-1)}-\vx^{(k)})-\frac{\alpha_{k}}{1-\beta} \vu^{(k)}\big)\nonumber\\
		&~\textstyle+ (\nabla F(\vz^{(k)})-\nabla F(\vx^{(k)}))^{\top} \big(\frac{\beta}{1-\beta}(1 - \alpha_{k}  / \alpha_{k-1}   )(\vx^{(k-1)}-\vx^{(k)})-\frac{\alpha_{k}}{1-\beta} \vu^{(k)}\big) \nonumber\\ 
		&~\textstyle+\frac{\rho}{2}\EE_k\big\|\frac{\beta}{1-\beta}(1 - \alpha_{k}  / \alpha_{k-1}   )(\vx^{(k-1)}-\vx^{(k)})-\frac{\alpha_{k}}{1-\beta}  \vg^{(k)}\big\|^2.
	\end{align}
	We bound the right-hand side of \eqref{eq:break2} as follows:
	\begin{itemize}
		\item in the first line of \eqref{eq:break2}, applying the Cauchy-Schwarz inequality gives
		\[\textstyle	\nabla F(\vx^{(k)})^{\top} \frac{\beta}{1-\beta} (1 - \alpha_{k}  / \alpha_{k-1}   )(\vx^{(k-1)}-\vx^{(k)}) \le \frac{1}{2}(1 -\frac{\alpha_{k}}{\alpha_{k-1}})^2 \|\nabla F(\vx^{(k)})\|^{2} +  \frac{\beta^2}{2(1-\beta)^2}\|\vx^{(k-1)}-\vx^{(k)}\|^2; \]
		\item in the second line of \eqref{eq:break2}, it follows  from \eqref{eq:zxlip} and $ 0\le 1-\frac{\alpha_{k} }{\alpha_{k-1}}\le 1$ that
		\[\textstyle (\nabla F(\vz^{(k)})-\nabla F(\vx^{(k)}))^{\top}  \frac{\beta}{1-\beta}(1 - \alpha_{k}  / \alpha_{k-1}   )(\vx^{(k-1)}-\vx^{(k)}) \le
		\frac{\rho\beta^2 }{(1-\beta)^2}
		\|\vx^{(k-1)}-\vx^{(k)}\|^2		\]
		and in addition, by the Cauchy-Schwarz inequality,
		\begin{align*}
			\textstyle (\nabla F(\vz^{(k)})-\nabla F(\vx^{(k)}))^{\top} (-\frac{\alpha_{k}}{1-\beta} \vu^{(k)})
			&\le \textstyle
			\rho\cdot\frac{\beta }{1-\beta}\|\vx^{(k-1)}-\vx^{(k)}\|\cdot\frac{\alpha_{k}}{1-\beta}  \|\vu^{(k)}\| \\
			&\le \textstyle
			\frac{\rho\beta^2 }{2(1-\beta)^2}
			\|\vx^{(k-1)}-\vx^{(k)}\|^2	 + \frac{\rho\alpha_{k}^2}{2(1-\beta)^2}  \|\vu^{(k)}\|^2;
		\end{align*}		
		\item in the last line of \eqref{eq:break2},  using the Young's inequality gives
		\[\textstyle \frac{\rho}{2} \EE_k \|\frac{\beta}{1-\beta}(1 - \alpha_{k}  / \alpha_{k-1}   )(\vx^{(k-1)}-\vx^{(k)})-\frac{\alpha_{k}}{1-\beta}  \vg^{(k)}  \|^2
		\le \frac{\rho\beta^2}{(1-\beta)^2}
		\|\vx^{(k-1)}-\vx^{(k)}\|^2+\frac{\rho\alpha_{k}^2}{(1-\beta)^2} \EE_k\|  \vg^{(k)}\|^2;\]
		\item  furthermore, by Assumption~\ref{assump:variance}, 
		 we have 
		\begin{equation*}
			\EE_k\|  \vg^{(k)}\|^2
			= \EE_k \|   \nabla f(\vx^{(k-\tau_k)}; \xi_k)
			-\vu^{(k)}\|^2  + \|\vu^{(k)}\|^2 
			\le 
			\sigma^2 + \|\vu^{(k)}\|^2.   
		\end{equation*}	
	\end{itemize}
	Substitute the above four items into \eqref{eq:break2}, combine like terms, and take total expectation. We have  
	\begin{align}\label{eq:break3}
		0\le &~\textstyle\EE[ F(\vz^{(k)})-F(\vz^{(k+1)})]  + \frac{1}{2}(1 -\frac{\alpha_{k}}{\alpha_{k-1}})^2\EE \|\nabla F(\vx^{(k)})\|^{2} -   \frac{\alpha_{k}}{1-\beta}   \EE [\nabla F(\vx^{(k)})^{\top} \vu^{(k)}]  \nonumber\\
		&~\textstyle+   \frac{(1+5\rho)\beta^2}{2(1-\beta)^2}  \EE\|\vx^{(k-1)}-\vx^{(k)}\|^2 
		+ \frac{\rho\alpha_{k}^2\sigma^2}{(1-\beta)^2 } 
		 +\frac{3\rho\alpha_{k}^2}{2(1-\beta)^2}    u_k \nonumber \\
		= &~\EE[ F(\vz^{(k)})-F(\vz^{(k+1)})]  + \frac{1}{2}(1 -\frac{\alpha_{k}}{\alpha_{k-1}})^2\EE \|\nabla F(\vx^{(k)})\|^{2}   \\
		&~\textstyle-   \frac{\alpha_{k}  }{2(1-\beta)} \left[\EE\|\nabla F(\vx^{(k)})\|^2 + u_k - \EE\| \nabla F(\vx^{(k-\tau_k)} )-\nabla F(\vx^{(k)}) 
		\|^2\right]  \nonumber\\
		&~\textstyle+   \frac{(1+5\rho)\beta^2}{2(1-\beta)^2}  \EE\|\vx^{(k-1)}-\vx^{(k)}\|^2 
		+ \frac{\rho\alpha_{k}^2\sigma^2}{(1-\beta)^2 } +\frac{3\rho\alpha_{k}^2}{2(1-\beta)^2}    u_k,  \nonumber
	\end{align}
	where the equality is due to 
	$\va^\top\vb=\frac{1}{2}[\|\va\|^2+\|\vb\|^2-\|\va-\vb\|^2],$ for any two vectors $\va$ and $\vb$.

	 Using \eqref{eq:update-x} and the smoothness of $F$ and then substituting 
	\eqref{vm-2mom} and \eqref{hatgap-2mom}  to \eqref{eq:break3}, we have 
	\begin{align}\label{eq:break4}
		0\le &~\textstyle\EE[ F(\vz^{(k)})-F(\vz^{(k+1)})]  + \frac{1}{2}\left[(1 -\frac{\alpha_{k}}{\alpha_{k-1}})^2 -\frac{\alpha_{k}}{1-\beta}   \right]\EE \|\nabla F(\vx^{(k)})\|^{2}  \nonumber \\
		&~+\textstyle   \frac{\alpha_{k}  \rho^2}{2(1-\beta )}  \Big[\sum_{l=1}^{k-1}\theta_{k,l}\sum_{j=1}^{k-1}\theta_{k,j} u_j
		 +\sum_{j=1}^{k-1}\theta_{k,j}^{2} u_j +    \sigma^2\sum_{j=1}^{k-1}\theta_{k,j}^{2}\Big] +\frac{1}{2}\big( \frac{3\rho\alpha_{k}^2}{(1-\beta)^2}    -\frac{\alpha_{k}}{1-\beta} \big)  u_k
		\nonumber\\
		&~+\textstyle    \frac{(1+5\rho)\beta^2\alpha_{k-1}^2}{2(1-\beta)^2}  
		\Big[ \sum_{j=1}^{k-1}\big(\frac{\beta^{k-j-1}}{1-\beta} +\beta^{2(k-j-1)}\big) u_j  +  
		\frac{\sigma^2}{ 1-\beta^2}\Big]    
		+ \frac{\rho\alpha_{k}^2\sigma^2}{(1-\beta)^2 } . 
	\end{align}
	Summing the above inequality over $k=1,\ldots,K$ and utilizing \eqref{eq:vanish} lead to
	\begin{equation}
	\begin{aligned}\label{eq:tele1}
		0\le &~\textstyle F(\vx^{(1)})-\EE[F(\vz^{(k+1)})] - \frac{1}{4(1-\beta)}\sum_{k=1}^K \alpha_{k}  \EE \|\nabla F(\vx^{(k)})\|^{2}  +\frac{1}{2}\sum_{k=1}^K\big( \frac{3\rho\alpha_{k}^2}{(1-\beta)^2}    -\frac{\alpha_{k}}{1-\beta} \big) u_k  \\
		&~\textstyle+   \frac{   \rho^2}{2(1-\beta )} \sum_{k=1}^K\alpha_{k}\Big[\sum_{l=1}^{k-1}\theta_{k,l}\sum_{j=1}^{k-1}\theta_{k,j} u_j +\sum_{j=1}^{k-1}\theta_{k,j}^{2} u_j \Big] 	\\
		&~\textstyle+     \frac{(1+5\rho)\beta^2}{2( 1-\beta )^2}   \sum_{k=1}^K\alpha_{k-1}^2 
		  \sum_{j=1}^{k-1}\big(\frac{\beta^{k-j-1}}{1-\beta}+\beta^{2(k-j-1)}\big) u_j \\
		&~+\textstyle   \Big[ \frac{   \rho^2}{2(1-\beta )} \sum_{k=1}^K\alpha_{k}\sum_{j=1}^{k-1}\theta_{k,j}^{2}
		+     \frac{(1+5\rho)\beta^2}{2(1-\beta)^2(1-\beta^2)}  \sum_{k=1}^K   \alpha_{k-1}^2 + \frac{\rho}{(1-\beta)^2}\sum_{k=1}^K\alpha_{k}^2  	\Big]  
		\sigma^2. 
	\end{aligned}
	\end{equation}
Since $\{\alpha_{k}\}$ is non-increasing, it holds from \eqref{def:theta} that 
\begin{equation}\label{eq:theta-pi}
	\theta_{k,j}  \le \alpha_{\max\{k-\tau_k,j\}} \pi_{k,j}(\beta),
\end{equation} 
which together with {the two inequalities in} \eqref{eq:pi-sum1} gives
\begin{equation}\label{eq:theta-pi2}
\textstyle	\sum_{j=1}^{k-1} \theta_{k,j} \le \alpha_{\max\{k-\tau_k,1\}} \frac{ \tau}{ 1-\beta},
	\textup{ and }
\sum_{j=1}^{k-1} \theta_{k,j}^2 \le \alpha_{\max\{k-\tau_k,1\}}^2 \frac{ \tau}{ (1-\beta)^2}.
\end{equation}
Plugging the latter inequality of \eqref{eq:theta-pi2} into the fourth line of \eqref{eq:tele1}, and also interchanging the summations in the second and third lines of \eqref{eq:tele1} yield 
	\begin{align}\label{eq:tele2}
		0\le &~\textstyle F(\vx^{(1)})-\inf_{\vx} F(\vx) - \frac{1}{4(1-\beta)}\sum_{k=1}^K\alpha_{k}  \EE \|\nabla F(\vx^{(k)})\|^{2}  +\frac{1}{2}\sum_{k=1}^K\big( \frac{3\rho\alpha_{k}^2}{(1-\beta)^2}    -\frac{\alpha_{k}}{1-\beta} \big) u_k \nonumber \\
		&~\textstyle+   \frac{   \rho^2}{2(1-\beta )}  \sum_{j=1}^{K-1}u_j\sum_{k=j+1}^K\alpha_{k} \theta_{k,j}\big( \theta_{k,j} + \sum_{l=1}^{k-1}\theta_{k,l}\big)
		+     \frac{(1+5\rho)\beta^2}{(1-\beta)^3(1-\beta^2)}  \sum_{j=1}^{K-1} u_j \alpha_{j}^2 \nonumber\\
		&~\textstyle+   \Big[ \frac{  \rho^2\tau}{2(1-\beta) }\sum_{k=1}^K\alpha_{k}\alpha_{\max\{k-\tau_k,1\}}^2
		+     \frac{(1+5\rho)\beta^2}{2(1-\beta^2) }  \sum_{k=1}^K  \alpha_{k-1}^2 + \rho\sum_{k=1}^K\alpha_{k}^2  \Big] 
		\frac{\sigma^2}{(1-\beta)^2 },
	\end{align}
where the last summation in the second line is simplified by utilizing the following summation bound, $$\textstyle \sum_{k=1}^K\alpha_{k-1}^2\sum_{j=1}^{k-1} t^{k-j-1} u_j = \sum_{j=1}^{K-1}  u_j\sum_{k=j+1}^{K}\alpha_{k-1}^2 t^{k-j-1} \le \sum_{j=1}^{K-1}  u_j\alpha_{j}^2/(1-t).$$
Furthermore,
	\begin{align}
	&~\textstyle \sum_{k=j+1}^K \alpha_{k} \theta_{k,j} \big( \theta_{k,j}+ \sum_{l=1}^{k-1}\theta_{k,l}\big)\nonumber\\
	\le&~\alpha_{j} \textstyle \sum_{k=j+1}^K \alpha_{\max\{k-\tau_k,j\}} \pi_{k,j}(\beta) \big(\alpha_{\max\{k-\tau_k,j\}} \pi_{k,j}(\beta)+ \frac{\alpha_{\max\{k-\tau_k,1\}}\tau}{1-\beta}\big) \nonumber\\
	 \le&~ \textstyle \alpha_{j}   
	\sum_{k\colon j+1\le k \le K,\,   k\le j+\tau-1 } \alpha_j \frac{1}{ 1-\beta } \big(\alpha_j \frac{1}{ 1-\beta }+ \frac{\alpha_1\tau}{1-\beta}\big) \cr
	 &~ \textstyle + \alpha_{j} \sum_{k\colon j+1\le k \le K,\,   k\ge j+\tau} \alpha_{j} \frac{\beta^{k-\tau-j}}{1-\beta} \big(\alpha_{j} \frac{\beta^{k-\tau-j}}{1-\beta}+ \frac{\alpha_{j}\tau}{1-\beta}\big) \nonumber \\
	 \le&~ \textstyle \frac{\tau(\tau-1)\alpha_1\alpha_{j}^2}{(1-\beta)^2}  +\frac{ (\tau-1) \alpha_{j}^3}{(1-\beta)^2} 
	+ \frac{\tau\alpha_j^3}{(1-\beta)^3}  + \frac{ \alpha_j^3}{(1-\beta)^2(1-\beta^2)} .\label{eq:theta-sum-3}
\end{align}
In the above, the first inequality follows from $\alpha_{k}\le\alpha_{j}$ for all $k\ge j$, \eqref{eq:theta-pi}  and \eqref{eq:theta-pi2}; 
the second inequality breaks the summation on $k$ into two parts: in the first part $k\le j+\tau-1$, we used $\pi_{k,j}(\beta)\le \frac{1}{ 1-\beta }$ by  \eqref{def:theta} and also $ \alpha_{\max\{k-\tau_k,j\}}\le\alpha_{j}$ and $\alpha_{\max\{k-\tau_k,1\}}\le\alpha_1 $; and in the second part $k\ge j+\tau$, since $k\ge j+\tau_k$, we have  $\pi_{k,j}(\beta)\le \frac{\beta^{k-\tau-j}}{ 1-\beta }$ from the second case in the equality of \eqref{eq:pi-sum1} 
and also, $ \alpha_{\max\{k-\tau_k,j\}}=\alpha_{\max\{k-\tau_k,1\}}\le\alpha_{j} $.

	Now substitute \eqref{eq:theta-sum-3} into \eqref{eq:tele2}, use the assumption in \eqref{eq:main-cond} to drop the non-positive terms about $u_j$, also use the definition of $\bar\vx^{(K)}$ in \eqref{eq:def-xK}, and then rearrange terms to obtain the desired result in \eqref{eq:main-ncv}.
\end{proof}



Below we specify the setting of $\{\alpha_{k}\}$ and show the sublinear   convergence.

\begin{corollary}\label{cor:const}
	Given a maximum number $K $ of iterations, let $\alpha_{k}=\alpha/\sqrt{K}$ for all $k=1,\ldots,K,$ and for some $\alpha>0.$ 
	If $\alpha>0$ and $\beta>0$ are chosen such that 
	\begin{equation}\label{cor:tau-cond-2}
\textstyle		\tau^2      + \frac{\tau}{ 1-\beta}  +\frac{\beta^2}{1-\beta^2}   \le 
		\frac{ (1-\beta)^2 K  }{  2\alpha^2   \rho^2},
		\text{ and } 
		3\rho+ \frac{2(1+5\rho)\beta^2}{(1-\beta)(1-\beta^2)}  \le    \frac{(1-\beta)\sqrt{K} } {2\alpha  },
	\end{equation}
	then under Assumptions \ref{assump:unbiased} and \ref{smoothness}--\ref{assump:bound-tau}, the iterate $\bar\vx^{(K)}$ given in \eqref{eq:def-xK} 
	 satisfies 
	\begin{equation}\label{cor:converge}
		\EE\|\nabla F(\bar\vx^{(K)})\|^2  
		\le  \textstyle   \left(  \frac{\rho^2\alpha\tau}{2(1-\beta)\sqrt{K}}+  \frac{(1+5\rho)\beta^2}{2(1-\beta^2)}+\rho \right) 
		\frac{4\alpha  \sigma^2 }{(1-\beta) \sqrt{K}}   
		+ \frac{4(1-\beta)\left[ F(\vx^{(1)})-\inf_{\vx} F(\vx)\right] }{  \alpha \sqrt{K} }.
	\end{equation}
\end{corollary}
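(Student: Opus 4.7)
The plan is to apply Theorem~\ref{thm:ncv} with the constant choice $\alpha_k=\alpha/\sqrt{K}$, verify its two structural hypotheses \eqref{eq:vanish} and \eqref{eq:main-cond}, and then evaluate the explicit sums on the right-hand side of \eqref{eq:main-ncv}. Checking \eqref{eq:vanish} is immediate: since $\alpha_k=\alpha_{k-1}$ for all $k\ge 2$, the left-hand side is identically zero, so the inequality holds trivially.

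The core of the argument is verifying \eqref{eq:main-cond}. Substituting $\alpha_j=\alpha/\sqrt{K}$, I would split the left-hand side into two groups organized by the order in $K$. The terms that scale like $1/\sqrt{K}$ combine to
\[
\frac{\alpha}{(1-\beta)\sqrt{K}}\left[3\rho + \frac{2(1+5\rho)\beta^2}{(1-\beta)(1-\beta^2)}\right],
\]
which is bounded by $\tfrac{1}{2}$ by the second inequality in \eqref{cor:tau-cond-2}. The terms that scale like $1/K$ combine to
\[
\frac{\rho^2\alpha^2}{K(1-\beta)^2}\left[\tau(\tau-1) + (\tau-1) + \frac{\tau}{1-\beta} + \frac{1}{1-\beta^2}\right].
\]
The delicate step, and the main (if mild) obstacle, is recognizing that this bracket simplifies to $\tau^2+\tfrac{\tau}{1-\beta}+\tfrac{\beta^2}{1-\beta^2}$ after using $\tau(\tau-1)+(\tau-1)=\tau^2-1$ and $-1+\tfrac{1}{1-\beta^2}=\tfrac{\beta^2}{1-\beta^2}$. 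With this identification, the first inequality in \eqref{cor:tau-cond-2} bounds this second group by $\tfrac{1}{2}$ as well. Adding the two estimates confirms \eqref{eq:main-cond}.

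Once the hypotheses of Theorem~\ref{thm:ncv} are in force, all that remains is to evaluate the elementary sums
\[
\sum_{k=1}^K\alpha_k=\alpha\sqrt{K},\qquad \sum_{k=1}^K\alpha_k^2=\sum_{k=1}^K\alpha_{k-1}^2=\alpha^2,\qquad \sum_{k=1}^K\alpha_k\alpha_{\max\{k-\tau_k,1\}}^2=\frac{\alpha^3}{\sqrt{K}},
\]
where I use the convention $\alpha_0=\alpha_1$ from Lemma~\ref{lem:zz} together with the constancy of the stepsize. Plugging these into \eqref{eq:main-ncv} and factoring the common prefactor $\tfrac{4\alpha\sigma^2}{(1-\beta)\sqrt{K}}$ out of the first three summands inside the outer brackets produces exactly the bound \eqref{cor:converge}, completing the proof.
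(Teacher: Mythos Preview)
Your proposal is correct and follows essentially the same approach as the paper: verify \eqref{eq:vanish} trivially from constancy, split the left-hand side of \eqref{eq:main-cond} into the two groups that the paper records as \eqref{eq:twohalfs}, bound each by $\tfrac12$ using the two parts of \eqref{cor:tau-cond-2}, and then substitute the elementary sums into \eqref{eq:main-ncv}. Your exposition is in fact more explicit than the paper's, which simply asserts that ``it is not hard to verify'' the two half-bounds; the algebraic identity $\tau(\tau-1)+(\tau-1)+\tfrac{1}{1-\beta^2}=\tau^2+\tfrac{\beta^2}{1-\beta^2}$ you spell out is exactly what underlies the first condition in \eqref{cor:tau-cond-2}.
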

\begin{proof}
	When $\alpha_{k}\equiv\alpha/\sqrt{K}$, \eqref{eq:vanish} is trivially true, and in addition, when \eqref{cor:tau-cond-2} hold, it is not hard to verify  	
	\begin{equation}\label{eq:twohalfs}		 
\textstyle		\rho^2     \left[ \frac{\tau(\tau-1)\alpha_1\alpha_{j}}{(1-\beta)^2} +\frac{ (\tau-1) \alpha_{j}^2}{(1-\beta)^2} 
		+ \frac{\tau\alpha_j^2}{(1-\beta)^3} + \frac{ \alpha_j^2}{(1-\beta)^2(1-\beta^2)} 
		\right]\le  \frac{1}{2},
		\text{ and } 
		\left[ \frac{3\rho }{1-\beta} +  \frac{2(1+5\rho)\beta^2}{(1-\beta)^2(1-\beta^2)} \right]  \alpha_{j}   \le  \frac{1}{2},
	\end{equation}
which implies \eqref{eq:main-cond}. Finally, \eqref{eq:main-ncv} simplifies to \eqref{cor:converge}.
\end{proof}

\begin{remark}
From \eqref{eq:twohalfs}, we see that the delay can reduce the convergence speed of Alg.~\ref{alg:async-hvb-sgm} by roughly  $O(\frac{\tau}{\sqrt K})$.  When $\tau=o( \sqrt{K}    )$, the slow-down effect is negligible.  
\end{remark}

\begin{corollary}\label{cor:variant}
	Given a maximum number $K $ of iterations, let $\alpha_{k}=\alpha/\sqrt{a+k-1}$ for all $k=1,\ldots,K,$ and for some $a\ge 2\tau $ 
	 such that $a\sqrt{a+1} \ge \frac{ 1-\beta }{2\alpha}$. 
	If 
	\begin{equation}\label{cor:tau-cond-22}
	\textstyle	
		\tau^2      + \frac{\tau}{ 1-\beta}  +\frac{\beta^2}{1-\beta^2}   \le 
		\frac{ (1-\beta)^2 a  }{  2\alpha^2   \rho^2},
		\text{ and } 
		3\rho+ \frac{2(1+5\rho)\beta^2}{(1-\beta)(1-\beta^2)}   \le    \frac{(1-\beta)\sqrt{a} } {2\alpha  },
	\end{equation}
	then under Assumptions \ref{assump:unbiased} and \ref{smoothness}--\ref{assump:bound-tau}, the output of Alg.~\ref{alg:async-hvb-sgm} satisfies 
	\begin{equation}\label{cor:converge2}
	\begin{aligned}
\textstyle		\EE\|\nabla F(\bar\vx^{(K)})\|^2  		
		\le &~\textstyle \frac{2(1-\beta)\left[ F(\vx^{(1)})-\inf_{\vx} F(\vx)\right] }{   \alpha (\sqrt{a+K}-\sqrt{a}) } \\
		&~ \textstyle \hspace{-1cm}+ \Big[    \frac{   \rho^2\alpha (1+2a)\tau }{(1-\beta)a\sqrt{a}}  
		+     \frac{(1+5\rho)\beta^2}{2(1-\beta^2)}  (2+\ln\frac{a+K-2}{a})	
		+ \rho  (1+\ln\frac{a+K-1}{a}) 
		\Big]   
		\cdot\frac{2\alpha\sigma^2}{(1-\beta)  (\sqrt{a+K}-\sqrt{a})}	.
	\end{aligned}
	\end{equation}
\end{corollary}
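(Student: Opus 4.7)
My plan is to reduce Corollary~\ref{cor:variant} to Theorem~\ref{thm:ncv} by verifying its two hypotheses for the varying stepsize $\alpha_k=\alpha/\sqrt{a+k-1}$, and then bounding each of the four summations that appear on the right-hand side of (\ref{eq:main-ncv}).

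For (\ref{eq:vanish}), setting $b=a+k-2$ and using the identity $1-\alpha_k/\alpha_{k-1}=1-\sqrt{b/(b+1)}=\frac{1}{\sqrt{b+1}(\sqrt{b+1}+\sqrt{b})}$ yields $(1-\alpha_k/\alpha_{k-1})^2\le\frac{1}{4b(b+1)}$, so the required inequality reduces to $b\sqrt{b+1}\ge (1-\beta)/(2\alpha)$. The worst case $k=2$ gives $b=a$, which is exactly the standing assumption $a\sqrt{a+1}\ge(1-\beta)/(2\alpha)$. For (\ref{eq:main-cond}), the monotonicity of $\alpha_j$ gives $\alpha_j\le\alpha_1=\alpha/\sqrt{a}$, so the left-hand side of (\ref{eq:main-cond}) is majorized by the same expression as in Corollary~\ref{cor:const} with $\sqrt{K}$ replaced by $\sqrt{a}$; the two conditions in (\ref{cor:tau-cond-22}) then force this majorant below $1$ through the same two-halves splitting that produced (\ref{eq:twohalfs}).

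It remains to estimate the four sums. Integral comparison yields $\sum_{k=1}^{K}\alpha_k\ge 2\alpha(\sqrt{a+K}-\sqrt{a})$, $\sum_{k=1}^{K}\alpha_k^2\le\alpha^2(1+\ln\tfrac{a+K-1}{a})$, and $\sum_{k=1}^{K}\alpha_{k-1}^2\le\alpha^2(2+\ln\tfrac{a+K-2}{a})$, where the extra constant in the last estimate absorbs $\alpha_0^2=\alpha_1^2$. The delay-dependent sum $\sum_{k=1}^{K}\alpha_k\alpha_{\max\{k-\tau_k,1\}}^2$ is the main obstacle; I would split the range at $k=\tau$. For $k\le\tau$, bounding $\alpha_{\max\{k-\tau_k,1\}}^2\le\alpha_1^2=\alpha^2/a$ contributes at most $\tau\alpha_1\alpha^2/a=\tau\alpha^3/a^{3/2}$. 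For $k>\tau$ the hypothesis $a\ge 2\tau$ gives $a+k-\tau-1\ge(a+k-1)/2$, hence $\alpha_{\max\{k-\tau_k,1\}}^2\le 2\alpha_k^2$, and the remaining $\sum_{k=\tau+1}^{K}\alpha_k^3$ is controlled by an integral of $(a+x-1)^{-3/2}$ and stays $O(\alpha^3/\sqrt{a})$. Combining these two pieces gives a bound of the order $\alpha^3\tau(1+2a)/(a\sqrt{a})$ claimed in (\ref{cor:converge2}).

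Plugging these four estimates into (\ref{eq:main-ncv}) and factoring $1/\sum_{k=1}^{K}\alpha_k$ out front produces the optimality-gap term and the three noise-like terms in the exact grouping displayed in (\ref{cor:converge2}); the remaining work is routine rearrangement.
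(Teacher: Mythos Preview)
Your proposal is correct and follows essentially the same approach as the paper. One minor point: the paper handles the delay sum $\sum_k\alpha_k\alpha_{\max\{k-\tau_k,1\}}^2$ without splitting, since $a\ge2\tau$ already gives $a+k-1-\tau\ge(a+k-1)/2$ for \emph{all} $k\ge1$, yielding the bound $\frac{2\alpha^3(1+2a)}{a\sqrt a}$ directly; the extra factor $\tau$ you quote actually comes from the coefficient $\frac{\rho^2\tau}{2(1-\beta)}$ in \eqref{eq:main-ncv}, not from the sum itself.
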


\begin{remark}
	Note that the logarithmic terms in \eqref{cor:converge2} dominate the $\tau$-related term if $\tau\le\frac{\sqrt{a-1}}{\alpha\rho}$, which matches the condition in \eqref{cor:tau-cond-22}. When there is no delay, i.e., $\tau=0$, a convergence rate of $\widetilde{\Theta}(\frac{1}{\sqrt{K}})$ can be achieved with $a=1$; when there is a delay, i.e., $\tau>0$, \eqref{cor:converge2} with $a=\Theta(\tau^2)$ gives a rate of $\widetilde{\Theta}(\frac{1}{\sqrt{K+\tau^2}-\sqrt{\tau^2}})=\widetilde{\Theta}\Big( \frac{1}{\sqrt{K}}(\sqrt{1+\frac{\tau^2}{K}} +\sqrt{\frac{\tau^2}{K}})\Big) $. In this case, the delay will have a negligible effect on the convergence speed if $\tau=o(\sqrt{K})$.
\end{remark}

 \section{Numerical experiments}\label{sec:numerical}
In this section, we test Alg.~\ref{alg:async-hvb-sgm} by numerical experiments on three examples: phase retrieval problem,   neural network  training, and sparse bilinear logistic regression.
For each example, we test the effect of the inertial force with different $\beta_k$. Also, we demonstrate the advantage of the asynchronous implementation over the synchronous version (i.e., $\tau_k=0,\forall\,k$) of Alg.~\ref{alg:async-hvb-sgm}. In all the tests, we compare the performance of Alg.~\ref{alg:async-hvb-sgm} with different settings of $\{\alpha_k\}$ and $\{\beta_k\}$, which are fixed to constants for all iterations $k$ or decrease with respect to the number of epochs. 
 
\subsection{Phase retrieval problem}
The phase retrieval problem aims to recover a signal $\vx^*\in \RR^d$ from $m$ measuring vectors\footnote{In general, the signal $\vx$ and the measuring vectors $\{\va_i\}$ can be complex-valued. For simplicity, we focus on the real field.} 
$\{\va_i\}_{i=1}^m $ and the correspondingly  obtained magnitudes $\big\{b_i\big\}_{i=1}^m$. 
 It can be formulated into the following non-smooth minimization problem 
 \cite{eldar2014phase,duchi2019solving,davis2020nonsmooth}:

\begin{equation}\label{eq:pr-prob}
\min_{\vx\in \RR^{d}}\frac{1}{m}\sum_{i=1}^m \left| |\langle\va_i,\vx\rangle|^2-b_i^2\right|,
\end{equation}
which is in the form of \eqref{eq:stoc-prob} with $F(\vx)=\frac{1}{m}\sum_{i=1}^m \big| |\langle\va_i,\vx\rangle|^2-b_i^2\big|$ and $r(\vx)\equiv0$. 
In the test, the vector $\va_i$ followed the standard multivariate Gaussian distribution, i.e., $\va_i\sim \mathcal{N}(\vzero, \vI)$, and we let  $b_i=|\langle \va_i,\vx^*\rangle|, \forall\, i$, for a ground truth $\vx^*$.
 Hence, the optimal objective value is \emph{zero}.
%

\begin{figure}[htbp] 
\begin{center}
\includegraphics[width=0.7\columnwidth]{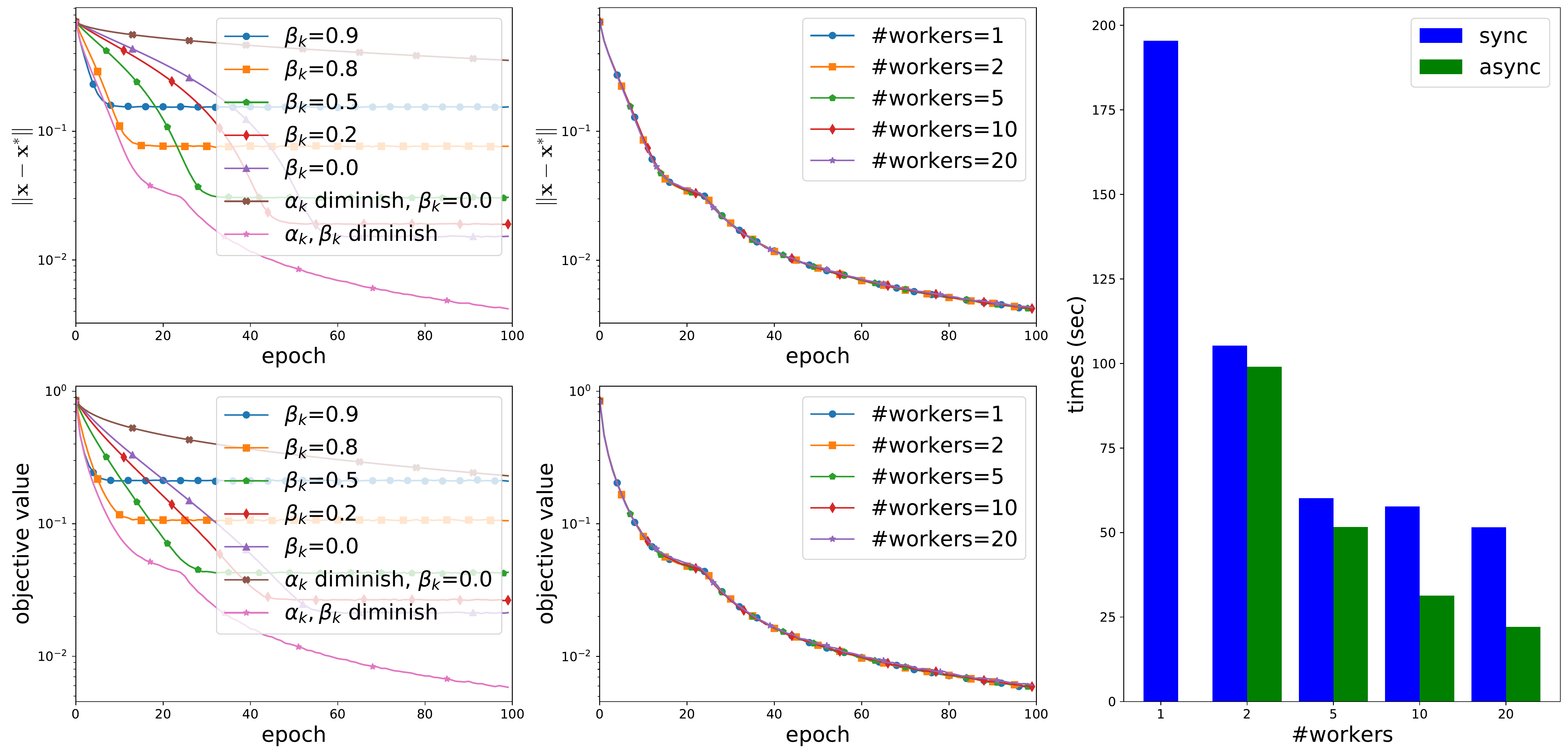} 
\caption{Results by Alg.~\ref{alg:async-hvb-sgm} on solving instances of the Phase Retrieval Problem \eqref{eq:pr-prob} with randomly generated $\vx^*$, $m=50,000$ and $d=20,000$. Left: 
non-parallel implementation of Alg.~\ref{alg:async-hvb-sgm} with different choices of $\{\alpha_k\}$ and $\{\beta_k\}$;  
Middle: async-parallel implementation of Alg.~\ref{alg:async-hvb-sgm} with diminishing $\{\alpha_k\}$ and $\{\beta_k\}$, and with different numbers of workers; Right: running time (in second) of the sync-parallel and async-parallel implementation of Alg.~\ref{alg:async-hvb-sgm} with different numbers of workers.  
}
\label{fig:PhaseRetrieval}
\end{center} 
\end{figure}

\vspace{0.1cm}

\noindent\textbf{Synthetic data.}~We first solved \eqref{eq:pr-prob} with $\vx^*$ generated from a uniform distribution on the $d$-dimensional unit sphere. Fig.~\ref{fig:PhaseRetrieval} shows the results for $m=50,000$ and $d=20,000$. We tested the algorithm for several pairs of $(m, d)$ and observed similar results. In the test, we computed a stochastic subgradient by using 100 data points, i.e., the minibatch size was set to 100. The parameters either followed a constant scheme with $\alpha_k = \alpha, \beta_k = \beta, \forall\, k$ where $\alpha = 5\times10^{-5}$ and $\beta\in \{0, 0.2, 0.5, 0.8, 0.9\}$; or diminished with $\alpha_k=\frac{5\times10^{-5}}{\sqrt{e_k+1}}$ and $\beta_k=\min\big\{0.9, \frac{2}{ (e_k+1)^{1/4}} \big\}, \forall\, k$, or $\beta_k=0, \forall\, k$. Here, $e_k$ denotes the epoch number at the $k$-th iteration. 
 During the test, we also experimented with different values of the constant $\alpha$. We found that for a smaller $\alpha$, the algorithm converged more slowly but could reach a lower objective value. 
The choice $\alpha = 5\times10^{-5}$ resulted in a good trade-off between the convergence speed and the final objective value.
%

From the left subfigure in Fig.~\ref{fig:PhaseRetrieval}, we see that the algorithm with a bigger $\beta$ converged faster but achieved a higher objective value.  
The convergence of the algorithm with a diminishing $\{\alpha_k\}$ and constant $\beta_k=0$ is the slowest. 
The best results were obtained by the choice of diminishing $\{\alpha_k\}$ and $\{\beta_k\}$.  Comparing the curve with diminishing $\{\alpha_k\}$ and $\{\beta_k\}$ to that with $\beta_k=0.9, \forall\, k$, we notice that the two curves are almost the same 
within the first 5 epochs, i.e., before the latter one becomes flat. 
However, the former can decrease the objective to a significantly smaller value. Thus both the choices of $\{\alpha_k\}$ and  $\{\beta_k\}$ contribute to the best results.
With the diminishing $\{\alpha_k\}$ and $\{\beta_k\}$ that yield the best results for the non-parallel case, we then compared the sync-parallel and async-parallel implementations of  Alg.~\ref{alg:async-hvb-sgm}. 
The middle subfigure in Fig.~\ref{fig:PhaseRetrieval} shows the results for the async-parallel version with different numbers of workers. The right subfigure shows the running time of both versions. The results show that the convergence speed (in terms of epoch number) of the async-parallel method is almost never affected by the asynchrony (or information delay). In addition, we see that the async-parallel implementation yielded significantly higher parallelization speed-up over the sync-parallel one, according to the right subfigure in Fig.~\ref{fig:PhaseRetrieval}. 

\begin{figure}[htbp] 
\begin{center} 
\includegraphics[width=0.42\columnwidth]{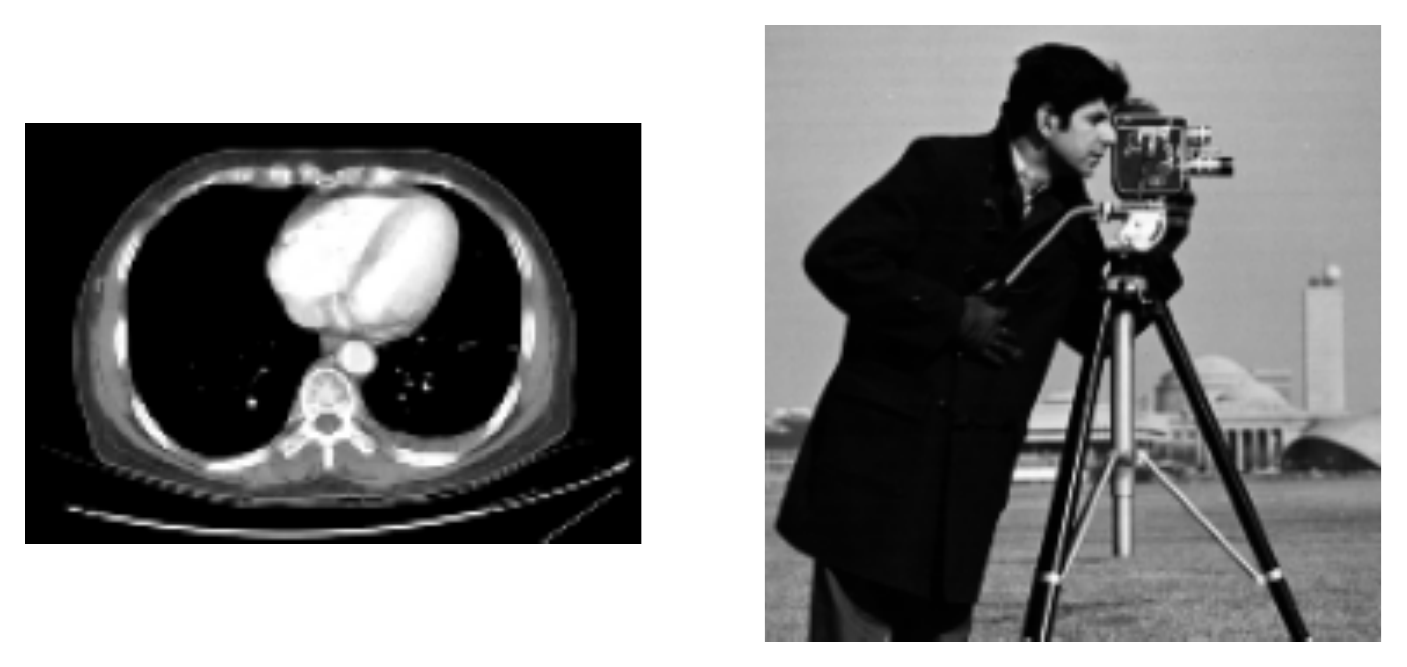}
\caption{Ground-truth images. Left: a CT scan image. Right: the cameraman image.}
\label{fig:image_exact}
\end{center} 
\end{figure}  

\vspace{0.1cm}

\noindent\textbf{Image data.}~
We also solved \eqref{eq:pr-prob} with $\vx^*$ flattened from an image. We tested with two images: a CT scan image\footnote{https://aimi.stanford.edu/radiopaedia-list-ai-imaging-datasets} of size $94 \times 138$ after downsampling and the cameraman image\footnote{https://github.com/antimatter15/cameraman} of size $196\times 196$ after cropping.  Fig.~\ref{fig:image_exact} shows the ground-truth images,  Fig.~\ref{fig:radiopaedia} and Fig.~\ref{fig:cameraman} show convergence curves and computing times, and Fig.~\ref{fig:radiopaedia_finals} and Fig.~\ref{fig:cameraman_finals} show recovered images.
In the test, for the CT scan image, 
$d=12,972$, and we selected $m=40,000$, computed each stochastic subgradient by using 100 randomly sampled data points, and set $\alpha_k=\frac{10^{-4}}{\sqrt{e_k+1}}$; for the cameraman image, 
$d=38,416$, and we selected $m=60,000$, computed each stochastic subgradient by using 60 randomly sampled data points and set $\alpha_k=\frac{5\times10^{-5}}{\sqrt{e_k+1}}$. 
We first tested the non-parallel version of Alg.~\ref{alg:async-hvb-sgm} with $\beta_k=\beta, \forall k$, where $\beta\in \{0, 0.2, 0.5, 0.8, 0.9\}$, and then tested the parallel version by different numbers of workers.

\begin{figure}[htbp] 
\begin{center} 
\includegraphics[width=0.7\columnwidth]{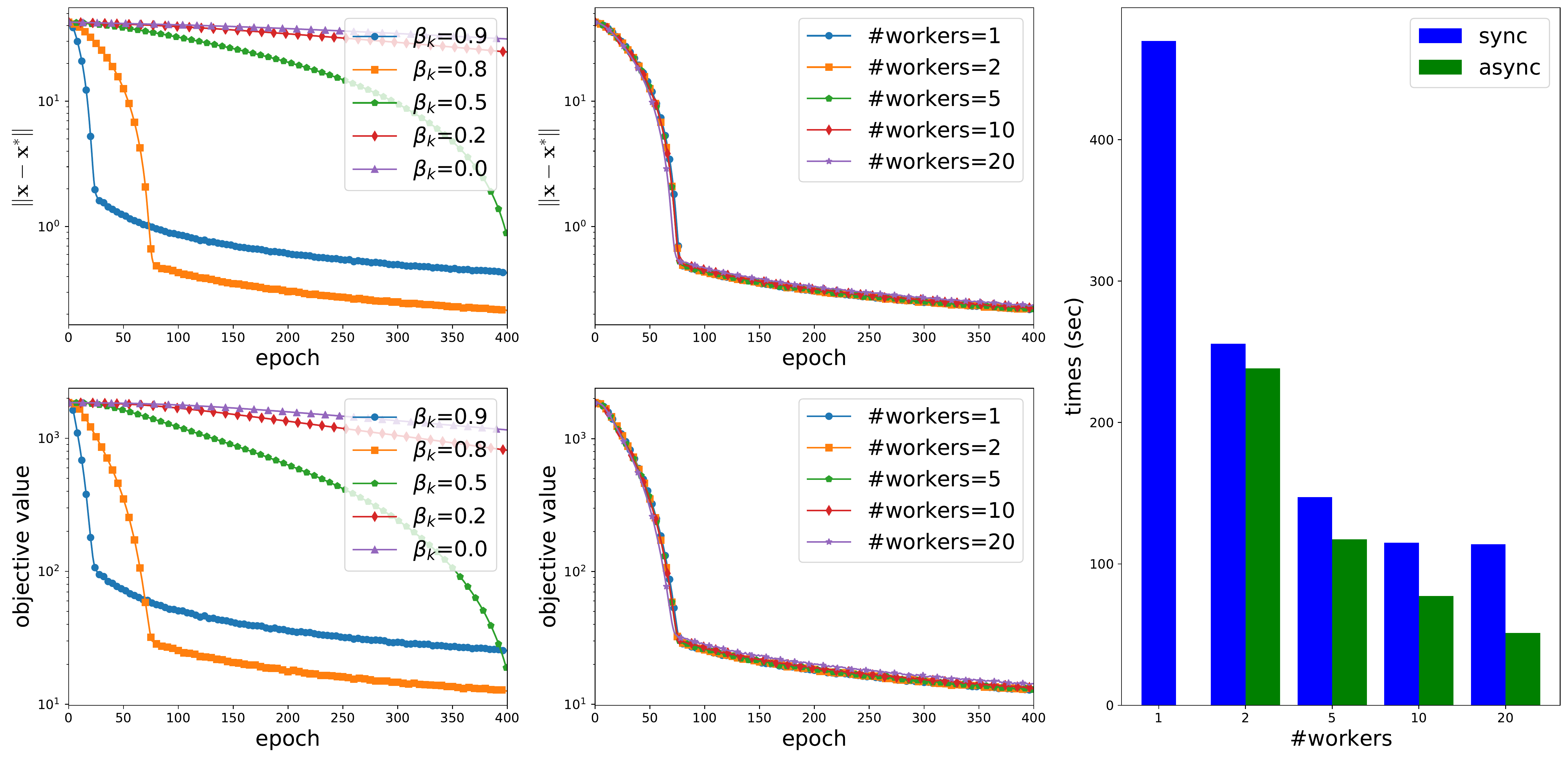}
\caption{Results by Alg.~\ref{alg:async-hvb-sgm} on solving instances of the Phase Retrieval Problem \eqref{eq:pr-prob} with a CT scan image as $\vx^*$ and $m=40,000$. 
Left: non-parallel implementation of Alg.~\ref{alg:async-hvb-sgm} with diminishing $\{\alpha_k\}$ and different choices of $\{\beta_k\}$;  
Middle: async-parallel implementation of Alg.~\ref{alg:async-hvb-sgm} with diminishing $\{\alpha_k\}$ and $\beta_k=0.8$, and with different numbers of workers; Right: running time (in second) of the sync-parallel and async-parallel implementation of Alg.~\ref{alg:async-hvb-sgm} with different numbers of workers.}
\label{fig:radiopaedia}
\end{center} 
\end{figure}  
\begin{figure}[htbp] 
\begin{center} 
\includegraphics[width=0.96\columnwidth]{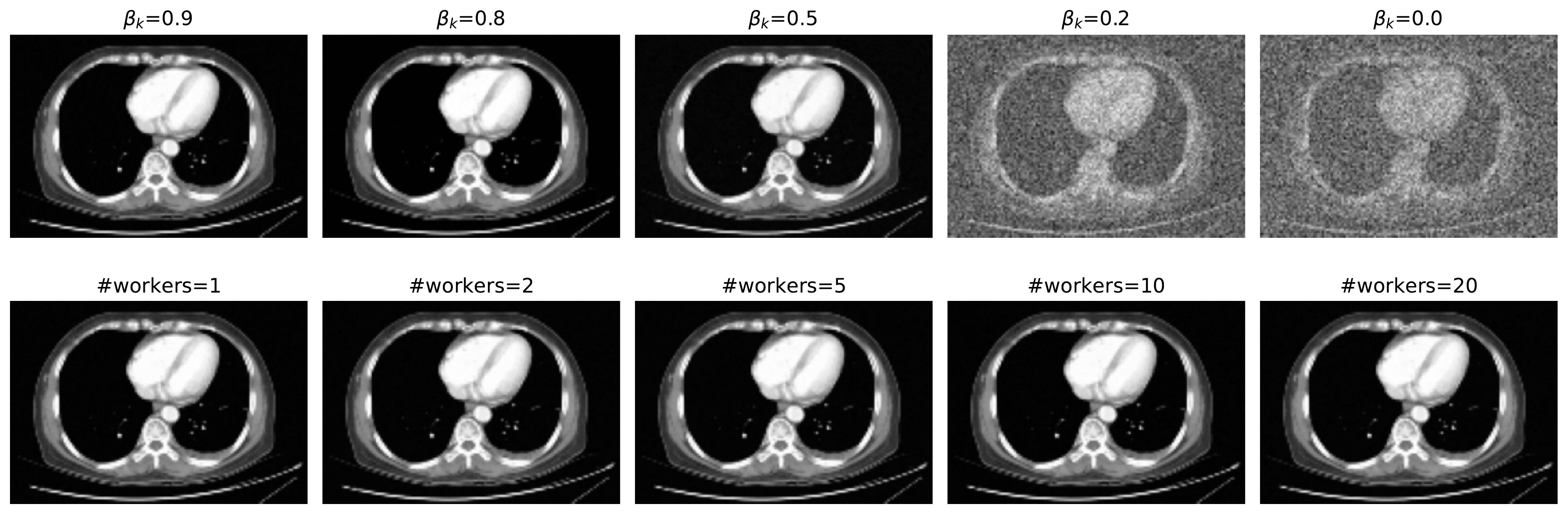}
\caption{Recovered images by Alg.~\ref{alg:async-hvb-sgm} on solving instances of the Phase Retrieval Problem \eqref{eq:pr-prob} with a CT scan image as $\vx^*$ and $m=40,000$. 
Top: non-parallel implementation of Alg.~\ref{alg:async-hvb-sgm} with diminishing $\{\alpha_k\}$ and different choices of $\{\beta_k\}$;  
Bottom: async-parallel implementation of Alg.~\ref{alg:async-hvb-sgm} with diminishing $\{\alpha_k\}$ and $\beta_k=0.8$, and with different numbers of workers.}
\label{fig:radiopaedia_finals}
\end{center} 
\end{figure}  

 \begin{figure}[htbp] 
\begin{center} 
\includegraphics[width=0.7\columnwidth]{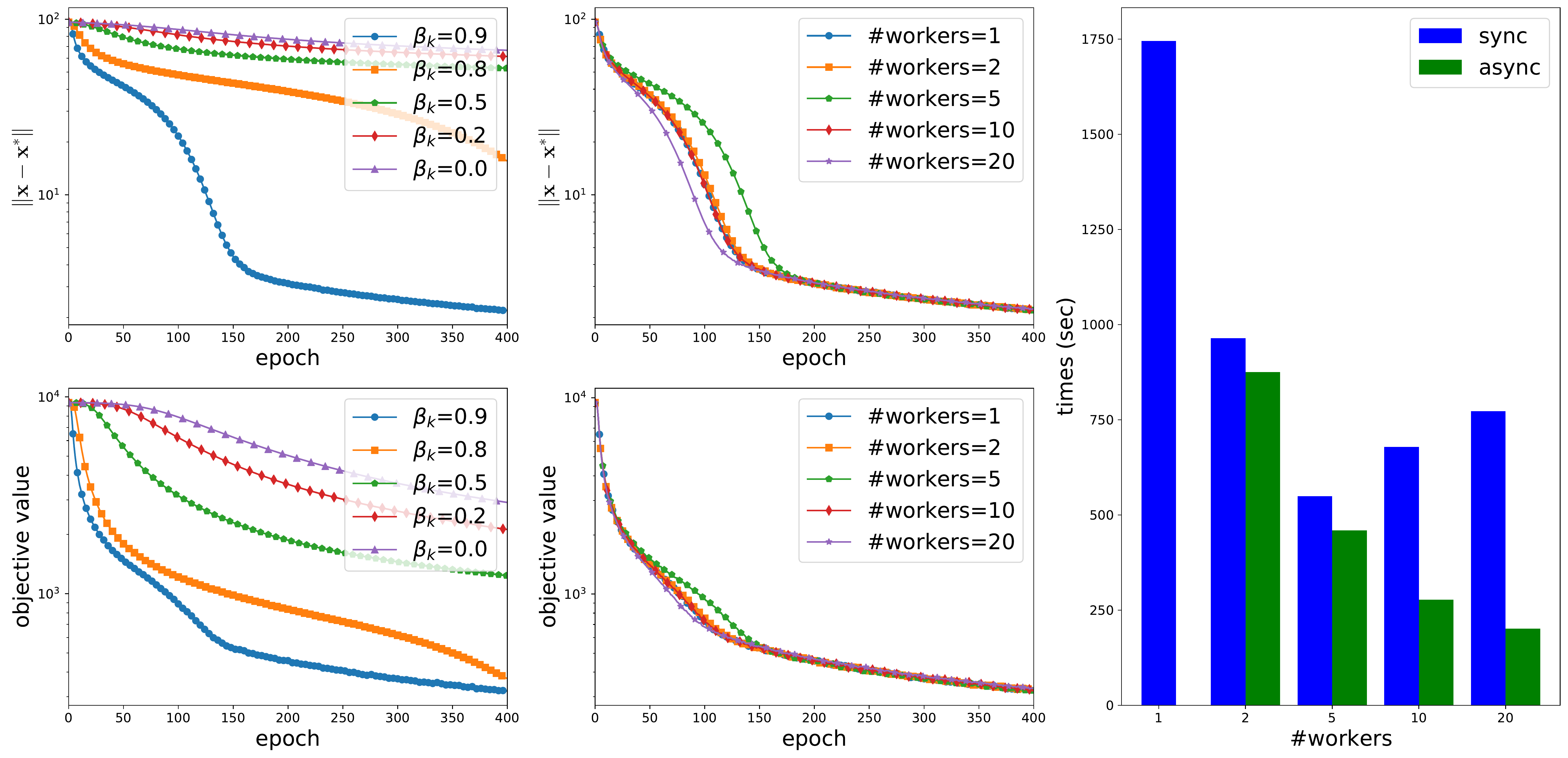}
\caption{Results by Alg.~\ref{alg:async-hvb-sgm} on solving instances of the Phase Retrieval Problem \eqref{eq:pr-prob} with  the crameman image as $\vx^*$ and $m=60,000$. 
Left: non-parallel implementation of Alg.~\ref{alg:async-hvb-sgm} with diminishing $\{\alpha_k\}$ and different choices of $\{\beta_k\}$;  
Middle: async-parallel implementation of Alg.~\ref{alg:async-hvb-sgm} with diminishing $\{\alpha_k\}$ and $\beta_k=0.9$, and with different numbers of workers; Right: running time (in second) of the sync-parallel and async-parallel implementation of Alg.~\ref{alg:async-hvb-sgm} with different numbers of workers.}

\label{fig:cameraman}
\end{center} 
\end{figure}  

 \begin{figure}[htbp] 
\begin{center} 
\includegraphics[width=0.96\columnwidth]{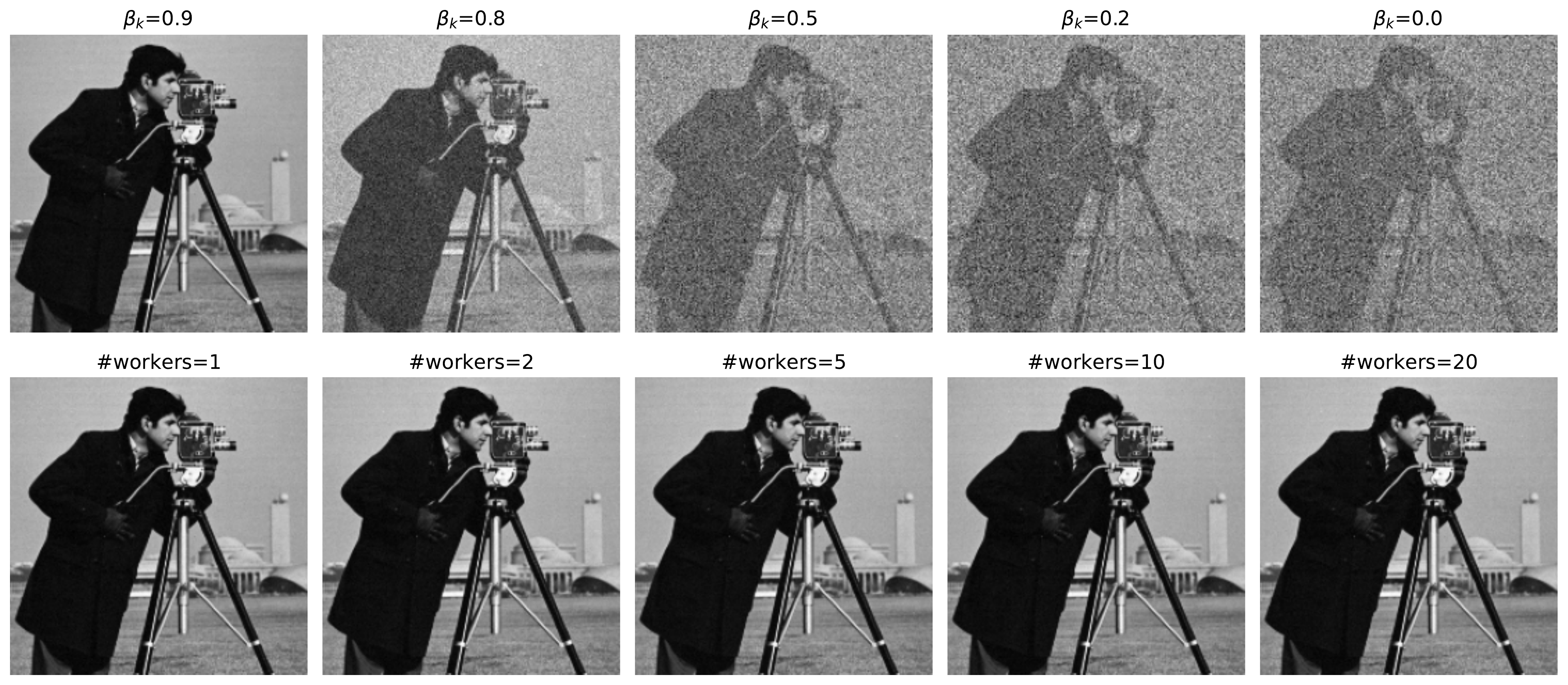}
\caption{Recovered images by Alg.~\ref{alg:async-hvb-sgm} on solving instances of the Phase Retrieval Problem \eqref{eq:pr-prob} with the crameman image as $\vx^*$ and $m=60,000$. 
Top: non-parallel implementation of Alg.~\ref{alg:async-hvb-sgm} with diminishing $\{\alpha_k\}$ and different choices of $\{\beta_k\}$;  
Bottom: async-parallel implementation of Alg.~\ref{alg:async-hvb-sgm} with diminishing $\{\alpha_k\}$ and $\beta_k=0.9$, and with different numbers of workers.}
\label{fig:cameraman_finals}
\end{center} 
\end{figure}  

From the left subfigures in Fig.~\ref{fig:radiopaedia} and Fig.~\ref{fig:cameraman}, we see that the algorithm with a bigger $\beta$ converged faster. After $400$ epochs, the algorithm achieved the lowest objective value and the smallest distance from $\vx^*$ with $\beta_k=0.8, \forall\, k$ for the CT scan image, and with $\beta_k=0.9, \forall\, k$ for the cameraman image.
Alg.~\ref{alg:async-hvb-sgm} recovered the image clearly for the CT scan image with  $\beta_k\equiv \beta\in \{0.9,0.8,0.5\}$ shown in the top subfigures in Fig.~\ref{fig:radiopaedia_finals} and for the cameraman image with $\beta_k=0.9, \forall\, k$ in the top subfigures in Fig.~\ref{fig:cameraman_finals}. The recovered images became clearer as the $\beta$ value increases. 
The middle subfigures in Fig.~\ref{fig:radiopaedia} and Fig.~\ref{fig:cameraman} show the results for the async-parallel version of Alg.~\ref{alg:async-hvb-sgm} with different numbers of workers, and the bottom subfigures in Fig.~\ref{fig:radiopaedia_finals} and Fig.~\ref{fig:cameraman_finals} show the corresponding recovered images. 
The right subfigures in Fig.~\ref{fig:radiopaedia} and Fig.~\ref{fig:cameraman} show the running time of both sync-parallel and async-parallel versions of Alg.~\ref{alg:async-hvb-sgm}. The results show that the convergence speed (in terms of epoch number) of the async-parallel method is rarely affected by the asynchrony (or information delay). In addition, we see that the async-parallel implementation yielded significantly higher parallelization speed-up over the sync-parallel one.

\subsection{Neural network models training}
In this subsection, we trained two neural network models by Alg.~\ref{alg:async-hvb-sgm}. One is LeNet5 on the MNIST dataset \cite{lecun1998gradient} and the other AllCNN \cite{springenberg2014striving} on the Cifar10 dataset \cite{krizhevsky2009learning}. LeNet5 has 2 convolutional,  2 max-pooling, and 3 fully-connected layers. AllCNN has 9 convolutional and 1 avg-pooling layers. The outputs of the two models are re-scaled as probabilities in all classes for each data sample by the softmax function. The estimated probabilities and the true class labels are fed to the negative log likelihood loss function to get the losses.  The objective is to minimize the mean loss over all data samples, which is in the form of \eqref{eq:stoc-prob} with the model weights as $\vx$, the mean loss as $F(\vx)$ and $r(\vx)\equiv0$.
For both trainings, we set 
$\alpha_k = \alpha, \forall\, k$ and selected the best $\alpha$ from $\{0.01, 0.005, 0.001, 0.0005, 0.0001\}$. 
For training LeNet5, we used $\alpha_k=0.001$, and for training Cifar10, we used  $\alpha_k=0.005, \forall\, k$.

 \begin{figure}[htbp] 
\begin{center} 
\includegraphics[width=0.96\columnwidth]{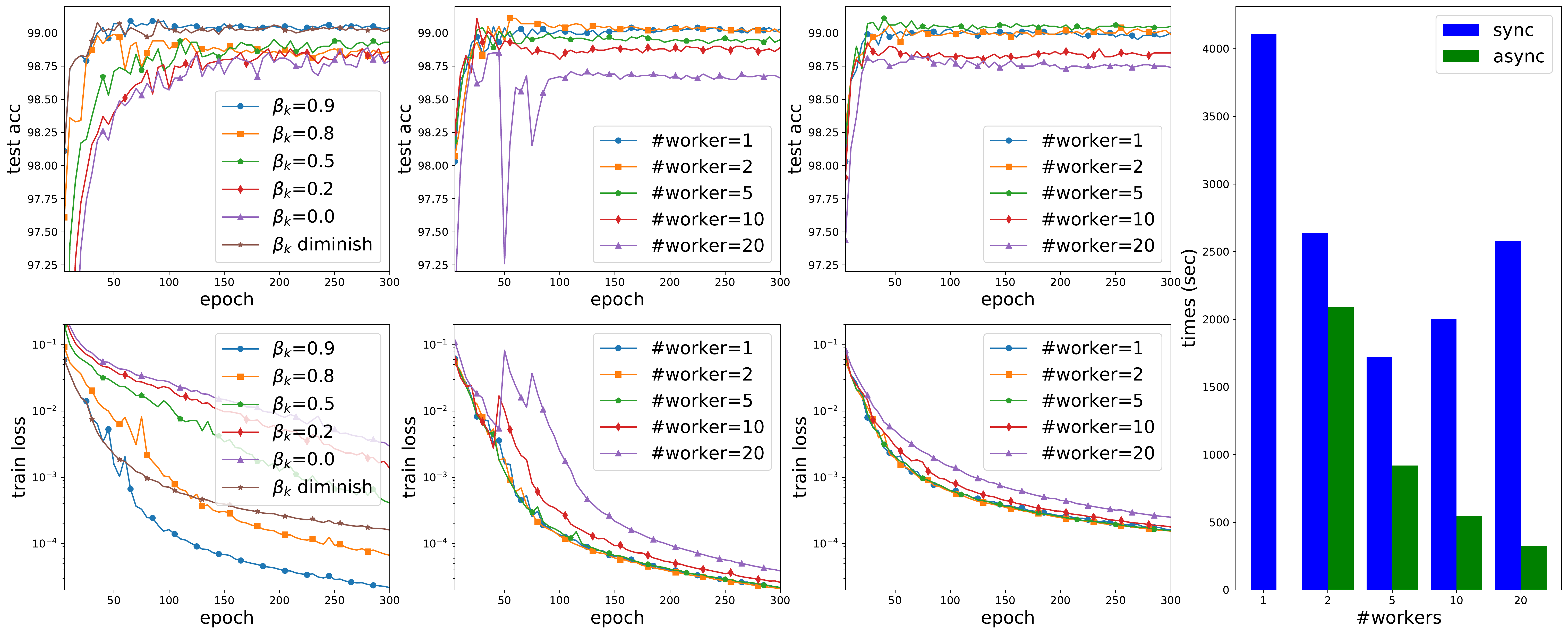}
\caption{Results by Alg.~\ref{alg:async-hvb-sgm} on training LeNet5 on the MNIST dataset. 
First column: non-parallel implementation of Alg.~\ref{alg:async-hvb-sgm} with $\alpha_k=0.001,\forall\, k$ and different choices of $\{\beta_k\}$; Second column:  async-parallel implementation of Alg.~\ref{alg:async-hvb-sgm} 
with $\alpha_k=0.001$ and $\beta_k=0.9,\forall\, k$; Third column: async-parallel implementation of Alg.~\ref{alg:async-hvb-sgm} 
with $\alpha_k=0.001$ and $\beta_k=\min\big\{0.9, \frac{2}{ (e_k+1)^{1/4}} \big\},\forall\, k$; Fourth column: running time (in second) of the sync-parallel and async-parallel implementations of Alg.~\ref{alg:async-hvb-sgm} with different numbers of workers.
}
\label{fig:mnist}
\end{center} 
\end{figure}  
The results of training LeNet5 on the MNIST dataset are shown in Fig.~\ref{fig:mnist}.  In the test, we computed a stochastic subgradient by using $40$ data samples, i.e., the minibatch size was set to $40$. 
We first tested Alg.~\ref{alg:async-hvb-sgm} with $\beta_k=\beta, \forall k$, where $\beta\in \{0, 0.2, 0.5, 0.8, 0.9\}$, or $\beta_k=\min\big\{0.9, \frac{2}{ (e_k+1)^{1/4}} \big\},\forall\, k$. The first column of  Fig.~\ref{fig:mnist} shows that  the algorithm with a bigger $\beta$ gave better results. 
Notice that the algorithm with $\beta_k=0.9$ or $\beta_k=\min\big\{0.9, \frac{2}{ (e_k+1)^{1/4}} \big\},\forall\, k$ give the highest testing accuracy. For these two choices, we ran the async-parallel version of Alg.~\ref{alg:async-hvb-sgm} with different numbers of workers. 
From the results in the second and third columns of Fig.~\ref{fig:mnist}, we see that the asynchrony had negative effect on the behavior of the algorithm, especially when more workers were used.  Nevertheless, the final training loss for all different number of workers is almost the same, and the final testing accuracy by using 10 or 20 workers is slightly lower than that produced by using fewer workers. 
The fourth column compares the running time of the sync-parallel and async-parallel implementations of Alg.~\ref{alg:async-hvb-sgm} with $\beta_k=\min\big\{0.9, \frac{2}{ (e_k+1)^{1/4}} \big\},\forall\, k$. Again, the bars show significantly higher parallelization speed-up by the async-parallel implementation over the sync-parallel one. 

\begin{figure}[htbp] 
\begin{center}
\includegraphics[width=0.7\columnwidth]{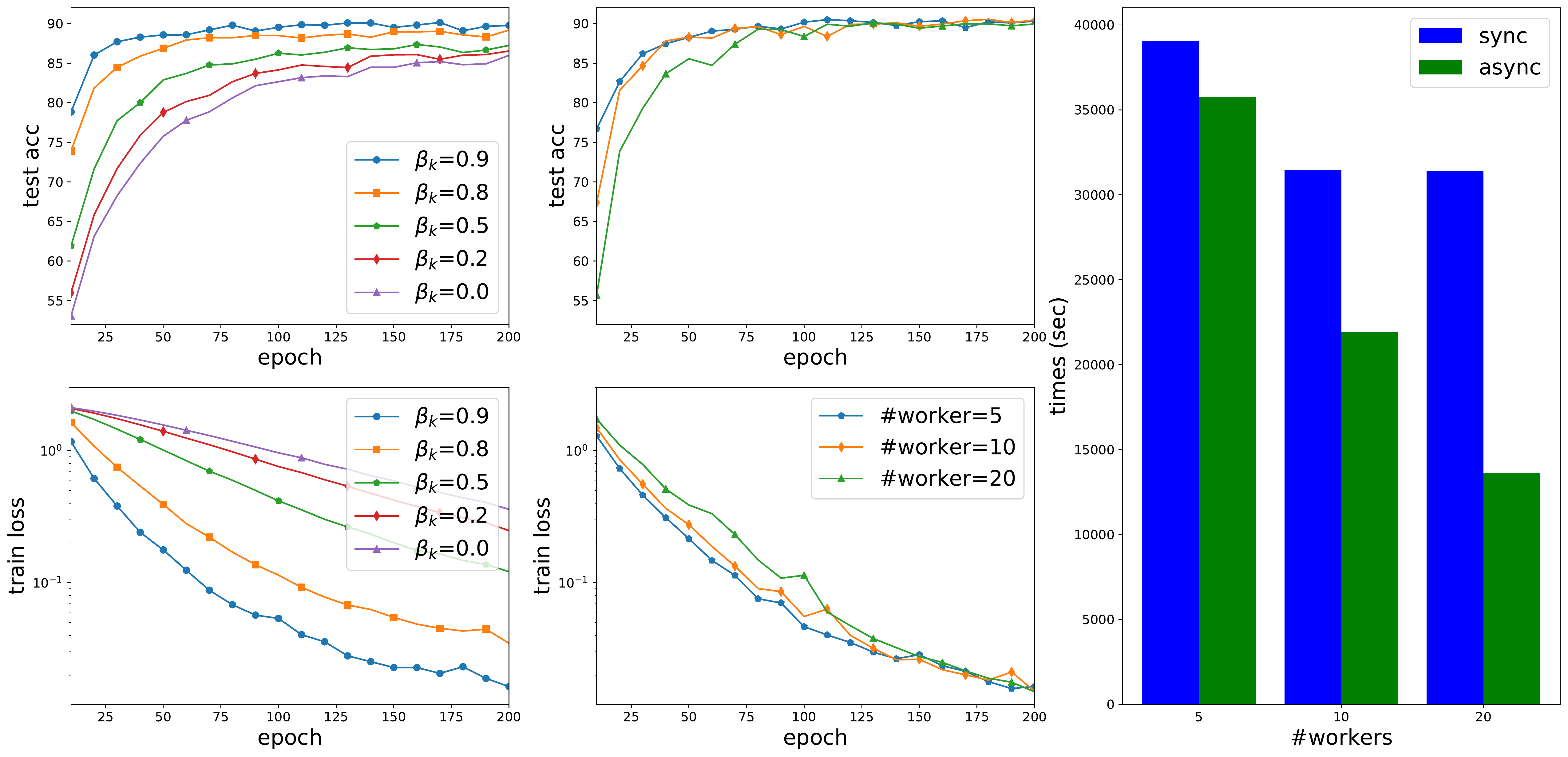}
\caption{ 
Results by Alg.~\ref{alg:async-hvb-sgm} on training AllCNN on  the Cifar10 dataset.
Left:
non-parallel implementation of Alg.~\ref{alg:async-hvb-sgm} with $\alpha_k=0.005$ and different $\{\beta_k\}$; 
Middle: 
async-parallel implementation of Alg.~\ref{alg:async-hvb-sgm} with 
$\alpha_k=0.005, \beta_k=0.9$, and with different numbers of workers; Right: running time (in second) of the sync-parallel and async-parallel implementations of Alg.~\ref{alg:async-hvb-sgm} with different numbers of workers.
}
\label{fig:cifar}
\end{center} 
\end{figure} 
The results of training AllCNN on the Cifar10 dataset are shown in Fig.~\ref{fig:cifar}. In the test, we set 
the minibatch size to $100$ and $\beta_k=\beta, \forall k$, where $\beta\in \{0, 0.2, 0.5, 0.8, 0.9\}$. 
 The left column of  Fig.~\ref{fig:cifar} shows that the algorithm with a bigger $\beta$ gave better results. The choice of $\beta_k=0.9, \forall\, k$ yielded the best results. 
With this choice, we compared the sync-parallel and async-parallel implementations of  Alg.~\ref{alg:async-hvb-sgm}. 
The middle column in Fig.~\ref{fig:cifar} shows the results for the async-parallel version with different numbers of workers. The right column shows the running time of both versions. 
From the results, we see that the convergence speed (in terms of epoch number) of the async-parallel method is almost not affected by the asynchrony. In addition, we see again that the async-parallel implementation yielded higher parallelization speed-up over the sync-parallel one.
 
\subsection{Sparse bilinear logistic regression}
In this subsection, we test Alg.~\ref{alg:async-hvb-sgm} on solving the sparse bilinear logistic regression (BLR) built in \cite{shi2014sparse}.
Let $\{(X_i, y_i)\}_{i=1}^m$  be the training data set 
with each data sample $X_i\in\RR^{s\times t}$ and label $y_i\in\{1,2,...,C\}$ for  $i=1,2,...,m$, where $C$ is the number of classes.  The sparse BLR is modeled as 
\begin{equation}\label{eq:bilinear}
\min_{\mathcal{U},\mathcal{V},\vb} -\frac{1}{m}\sum_{i=1}^m \log \left(\frac{\exp[\tr(U_{y_i}X_iV_{y_i})+b_{y_i}]}{\sum_{j=1}^C \exp[\tr(U_jX_iV_j)+b_j ]}\right) + \lambda (\|\mathcal{U}\|_1+\|\mathcal{V}\|_1+\|\vb\|_1),
\end{equation}
where $\mathcal{U}=(U_1, U_2, ..., U_C), \mathcal{V}=(V_1, V_2, ..., V_C), \vb=(b_1, b_2, ..., b_C)$ with $U_j\in\RR^{p\times s}, V_j\in\RR^{t\times p}, b_j\in\RR$ for $j=1,2,...,C$,    $\|\mathcal{U}\|_1 :=\sum_{j=1}^C\sum_{i=1}^{p}\sum_{l=1}^{s} |(U_j)_{i,l}|$, 
$\lambda\ge0$ is the weight for the sparse regularizer, and $\tr(S):=\sum_{i=1}^p S_{i,i}$ for any matrix $S\in\RR^{p\times p}$.
To solve \eqref{eq:bilinear}, we apply Alg.~\ref{alg:async-hvb-sgm}  with $\vx = (\mathcal{U},\mathcal{V},\vb)$, $F(\vx)$ being the first term in \eqref{eq:bilinear}, 
and $r(\vx)= \lambda (\|\mathcal{U}\|_1+\|\mathcal{V}\|_1+\|\vb\|_1)$. 

 In this test, we used 
the MNIST dataset \cite{lecun1998gradient} and  set the minibatch to 100 while computing a stochastic gradient of $F$. 
To obtain a relatively high accuracy and also relatively cheap computation, we chose $p=5$ and $\lambda=10^{-3}$. 
The learning rate was set to $\alpha_k=\alpha, \forall k$ with $\alpha$ tuned from $\{0.01, 0.005, 0.001, 0.0005, 0.0001\}$. 
To ensure convergence and also satisfactory final testing accuracy for both async-parallel and sync-parallel implementations of Alg.~\ref{alg:async-hvb-sgm}, we set $\alpha = 0.0005$. Note that the sync-parallel version could converge faster in the beginning with a larger $\alpha$ 
but the final testing accuracy and training loss were similar to those produced by using $\alpha=0.0005$.

\begin{figure}[htbp] 
\begin{center} 
\includegraphics[width=0.7\columnwidth]{./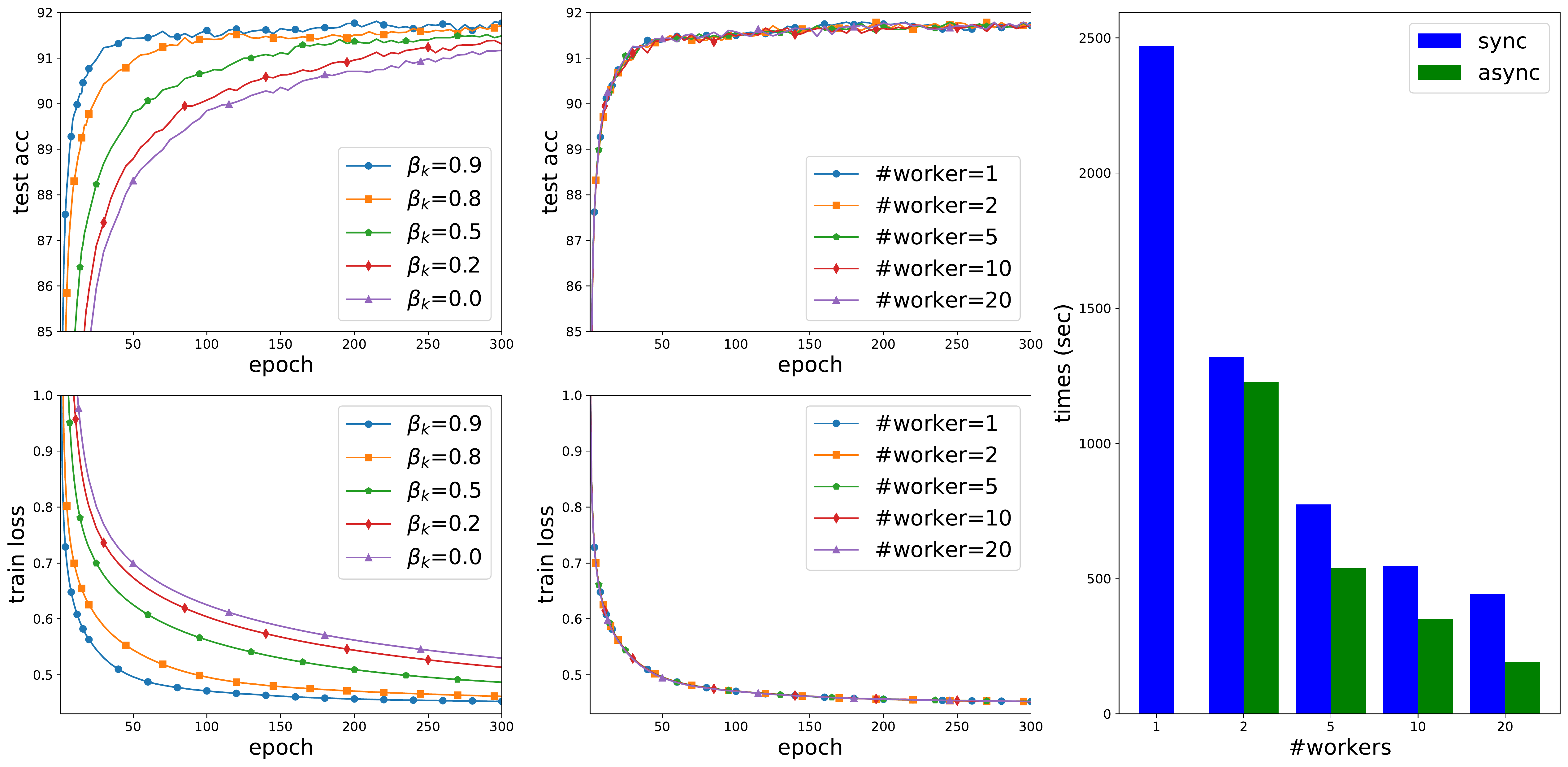} 
\caption{
Results by Alg.~\ref{alg:async-hvb-sgm} to solve the sparse bilinear logistic regression \eqref{eq:bilinear} on the MNIST dataset with $p=5$ and $\lambda=0.001$.  
Left: non-parallel implementation of Alg.~\ref{alg:async-hvb-sgm} with  $\alpha_k=0.0005$ and different $\{\beta_k\}$; Middle: async-parallel implementation of Alg.~\ref{alg:async-hvb-sgm} with 
$\alpha_k=0.0005, \beta_k=0.9,\forall\, k$, and with different numbers of workers; Right: running time (in second) of the sync-parallel and async-parallel implementations of Alg.~\ref{alg:async-hvb-sgm} with different numbers of workers.
}
\label{fig:bilinear_const}
\end{center} 
\end{figure}

The left column of Fig.~\ref{fig:bilinear_const} shows the results 
by Alg.~\ref{alg:async-hvb-sgm} with $\beta_k=\beta\in \{0, 0.2, 0.5, 0.8, 0.9\},  \forall k$.
We see that the algorithm with a bigger $\beta$ converges faster. 
The middle column in Fig.~\ref{fig:bilinear_const} shows the results by the async-parallel implementation of Alg.~\ref{alg:async-hvb-sgm} with $\beta_k=0.9, \forall k$ and with different numbers of workers. The right column shows the running time of both sync-parallel and async-parallel implementations. 
The results show that the convergence speed (in terms of epoch number) of the async-parallel method is almost not affected by the asynchrony. In addition, we see that the async-parallel implementation yielded significantly higher parallelization speed-up over the sync-parallel one.

\section{Conclusions}\label{sec:conclusion}
We have proposed an inertial-accelerated proximal stochastic subgradient method for solving non-convex stochastic optimization. An $O(1/K^{\frac{1}{2}})$ convergence rate result is established for three different problem classes, by the measure of the expected value of the gradient norm square of the objective function or its Moreau envelope, where $K$ is the number of total iterations. The same-order convergence rate can be shown even if the derivative information is outdated in an asynchronous distributed computing environment, provided that the delay (or staleness) of the derivative is in a tolerable range. Numerical experiments on phase retrieval, neural network training, and sparse bilinear logistic regression demonstrate faster convergence by using the inertial-acceleration technique and also the higher parallelization speed-up of the asynchronous computing over the synchronous counterpart.

\section*{Acknowledgements} The authors would like to thank two anonymous referees for their valuable comments and also the associate editor for the suggestion to add tests with images, which help greatly improve the paper. The work of Y. Xu and Y. Yan is partly supported by NSF grant DMS-2053493 and the Rensselaer-IBM AI Research Collaboration, part of the IBM AI Horizons Network. J. Chen is supported in part by DOE Award DE-OE0000910. 

\small 
\appendix

\section{Remaining proofs}
In this section, we provide proofs of the lemmas that are used in our analysis. 

\noindent\textbf{Proof of Lemma~\ref{lem:keybound2}.}
	For ease of notation, we denote $\delta=1-\alpha_{k}\overline{\rho}$ in this proof. We have 
	\begin{align}
		&~\|\vx^{(k+1)}-\widetilde\vx^{(k)} \|^2 \nonumber\\
		=&~ \big\| \prox_{\alpha_{k} r}\big( \vx^{(k)} - \alpha_{k} \vg^{(k)} +\beta_k (\vx^{(k)}-\vx^{(k-1)})\big) 
		- \prox_{\alpha_{k} r}\big(\alpha_{k}\overline{\rho}\vx^{(k)}-\alpha_{k}\widetilde\vv^{(k)}+(1-\alpha_{k}\overline{\rho})\widetilde\vx^{(k)} \big) \big\|^2   \nonumber\\
		\le	&~ \big\|  \delta ( \vx^{(k)}-\widetilde\vx^{(k)}   )- \alpha_{k}(\vg^{(k)} -\widetilde\vv^{(k)})  +\beta_k (\vx^{(k)}-\vx^{(k-1)})  \big\|^2   \label{eq:fix1}\\
		=&~ \big\|  \delta ( \vx^{(k)}-\widetilde\vx^{(k)}   )  +\beta_k (\vx^{(k)}-\vx^{(k-1)})  \big\|^2
		+\alpha_{k}^2  \big\|   \vg^{(k)} -\widetilde\vv^{(k)}  \big\|^2  \\
		&~- 2\alpha_{k}\big\langle \delta ( \vx^{(k)}-\widetilde\vx^{(k)}   )  +\beta_k (\vx^{(k)}-\vx^{(k-1)}),\vg^{(k)} -\widetilde\vv^{(k)} \big\rangle \label{eq:fix1-2}, \nonumber
	\end{align}
	where the first equality is from \eqref{eq:update-x-2} and  \eqref{eq:relate-xtilde}, and the inequality follows from the nonexpansiveness of the proximal mapping.		  
	Taking conditional expectation on $\xi_k$ over the equation in \eqref{eq:fix1-2} gives		  	
	\begin{align}
		&\EE_{\xi_k}  \|\vx^{(k+1)}-\widetilde\vx^{(k)} \|^2 \nonumber\\
		\le&~ \big\|  \delta ( \vx^{(k)}-\widetilde\vx^{(k)}   )  +\beta_k (\vx^{(k)}-\vx^{(k-1)})  \big\|^2
		+\alpha_{k}^2 \EE_{\xi_k}  \big\|   \vg^{(k)} -\widetilde\vv^{(k)}  \big\|^2 \nonumber \\ 
		& ~- 2\alpha_{k}\big\langle \delta ( \vx^{(k)}-\widetilde\vx^{(k)}   )  +\beta_k (\vx^{(k)}-\vx^{(k-1)}),\vv^{(k)} -\widetilde\vv^{(k)} \big\rangle    \nonumber \\
		\le&~ \big(  \delta \| \vx^{(k)}-\widetilde\vx^{(k)}   \|  +\beta_k \|\vx^{(k)}-\vx^{(k-1)}\|  \big)^2
		+4\alpha_{k}^2 M ^2 \nonumber \\
		&~ - 2\alpha_{k}\delta\big\langle    \vx^{(k)}-\widetilde\vx^{(k)}      ,\vv^{(k)} -\widetilde\vv^{(k)} \big\rangle   
		- 2\alpha_{k}\beta_k\big\langle   \vx^{(k)}-\vx^{(k-1)} ,\vv^{(k)} -\widetilde\vv^{(k)} \big\rangle \nonumber \\
		\le& ~      \delta^2 (1+c_k) \| \vx^{(k)}-\widetilde\vx^{(k)}   \|^2  +({\textstyle 1+\frac{1}{c_k}})\beta_k^2 \|\vx^{(k)}-\vx^{(k-1)}\|^2  
		+4\alpha_{k}^2 M ^2  \nonumber \\
		&- 2\alpha_{k}\delta\big\langle  \vx^{(k)}-\widetilde\vx^{(k)}, \vv^{(k)} -\widetilde\vv^{(k)} \big\rangle   
		+  \beta_k^2    \|\vx^{(k)}-\vx^{(k-1)}\|^2+    \alpha_{k}^2\|\vv^{(k)} -\widetilde\vv^{(k)}\|^2\nonumber, 
	\end{align}
	where the second inequality holds by \eqref{eq:subgrad-2ndmoment}, and the third inequality follows from the Young's inequality along with a scalar $c_k>0$.  Now we obtain the desired result by plugging \eqref{eq:bd-crs-term} into the above inequality, bounding $\|\vv^{( k)} -\widetilde\vv^{(k)}\|^2\le 4M ^2$, and noticing
	\begin{equation*}
		\delta^2 (1+c_k) +2\alpha_{k}\delta\rho
		=   1- 2\alpha_{k}(\overline{\rho}-\rho) -\alpha_{k}^2\overline{\rho}(2\rho-\overline{\rho}) +c_k\delta^2  
		\le 1- 2\alpha_{k}(\overline{\rho}-\rho)   +c_k,  
	\end{equation*}
	where the equality holds because $\delta=1-\alpha_{k}\overline{\rho}$, and the inequality follows from $\delta<1$, $c_k>0$, and $\overline{\rho}\le 2\rho$.
\endproof

\noindent\textbf{Proof of Lemma~\ref{lem:nsm-error-term}.}
	Taking conditional expectation on $\tau_k$, we have $\EE_{\tau_k}[F(\vx^{(k-\tau_k)})] = \sum_{j=0}^\tau p_j F(\vx^{(k-j)})$, where we let $\vx^{(k)}=\vx^{(1)},\forall\, k\le 0$. Hence,
	\begin{align*}
	\textstyle	\sum_{k=1}^K  \EE \big[F(\vx^{(k-\tau_k)})\big] = & \textstyle ~\sum_{k=1}^K \sum_{j=0}^\tau p_j \EE\big[F(\vx^{(k-j)})\big] \\
	= & \textstyle ~ \sum_{k=1}^K \sum_{t=k-\tau}^k p_{k-t} \EE\big[F(\vx^{(t)})\big] = \sum_{t=1-\tau}^K\sum_{k=\max\{1,t\}}^{\min\{K,t+\tau\}} p_{k-t} \EE\big[F(\vx^{(t)})\big],
	\end{align*}
	and
	\begin{align}\label{eq:sum-F-term}
		\sum_{k=1}^K  \EE \big[F(\vx^{(k)})-F(\vx^{(k-\tau_k)})\big] = &~\sum_{k=1}^K \EE \big[F(\vx^{(k)})\big] - \sum_{k=1-\tau}^K\sum_{t=\max\{1,k\}}^{\min\{K,k+\tau\}} p_{t-k} \EE\big[F(\vx^{(k)})\big]\cr
		=&~\left(1- \sum_{k=1-\tau}^1\sum_{t=1}^{k+\tau}p_{t-k}\right)F(\vx^{(1)}) + \sum_{k=K-\tau+1}^K\left(1-\sum_{t=k}^K p_{t-k}\right)\EE\big[F(\vx^{(k)})\big]\nonumber\\
		\le &~ \tau\max\big\{0, -F(\vx^{(1)})\big\} + \tau C_F.
	\end{align}
	
In addition, because $\tau_k\le \tau, \forall\, k$, it holds $\sum_{k=1}^K \|\vx^{(k-\tau_k)}-\vx^{(k)}\|^2\le \tau^2 \sum_{k=1}^K \|\vx^{(k-1)}-\vx^{(k)}\|^2$, which together with \eqref{eq:sum-xdiff-sq} gives
\begin{align}\label{eq:sum-xtauk-term}
\textstyle\sum_{k=1}^K \|\vx^{(k-\tau_k)}-\vx^{(k)}\|^2 \le \tau^2 \left(\frac{\alpha}{\gamma\sqrt K} \big(\phi(\vx^{(1)}) - \phi^* \big) +\frac{\alpha^2M ^2}{\gamma^2}\right).
\end{align}
	
	For the last term in $\cE_k$, we use \eqref{eq:subgrad-2ndmoment} and Assumption~\ref{assump:rand-delay} to bound it as follows
	\begin{align*}
	\textstyle	-\big\langle    \vx^{(k)}-\vx^{(k-\tau_k)} ,\vv^{(k)} \big\rangle \le M  \|\vx^{(k)}-\vx^{(k-\tau_k)}\| \le M \sum_{j=1}^\tau \|\vx^{(k+1-j)} -\vx^{(k-j)}\|,
	\end{align*}
	and thus
	\begin{align}\label{eq:sum-error-crs-term}
	\textstyle	\sum_{k=1}^K -\big\langle    \vx^{(k)}-\vx^{(k-\tau_k)} ,\vv^{(k)} \big\rangle \le M \tau \sum_{k=1}^K \|\vx^{(k)}-\vx^{(k-1)}\|.
	\end{align}
	By the Cauchy-Schwarz inequality and Jensen's inequality,  we have
	$$\textstyle \sum_{k=1}^K \EE\|\vx^{(k)}-\vx^{(k-1)}\| \le \sqrt{K}\sqrt{\sum_{k=1}^K \big(\EE\|\vx^{(k)}-\vx^{(k-1)}\|\big)^2}\le \sqrt{K}\sqrt{\sum_{k=1}^K \EE\|\vx^{(k)}-\vx^{(k-1)}\|^2},$$
	which together with \eqref{eq:sum-xdiff-sq} and \eqref{eq:sum-error-crs-term} gives
	\begin{equation}\label{eq:sum-error-crs-term2}
	\textstyle	\sum_{k=1}^K \EE\left[-\big\langle    \vx^{(k)}-\vx^{(k-\tau_k)} ,\vv^{(k)} \big\rangle\right] \le M \tau\sqrt K\sqrt{\frac{\alpha}{\gamma\sqrt K} \big(\phi(\vx^{(1)}) - \phi^* \big) +\frac{\alpha^2M ^2}{\gamma^2}}.
	\end{equation}
	Now we obtain the desired result from \eqref{eq:sum-F-term}, \eqref{eq:sum-xtauk-term}, and \eqref{eq:sum-error-crs-term2}.
\endproof

\noindent\textbf{Proof of Lemma~\ref{lem:neighbor}.}
	When condition~\ref{subgrad-bnd2} of Assumption~\ref{assump:subgrad-bound2} holds, the update in \eqref{eq:update-x-2} indicates that there exists a subgradient $\tilde\nabla r_2(\vx^{(k+1)})$ such that 
	\[\big\langle \vy-\vx^{(k+1)}, \alpha_{k} \tilde\nabla r_2(\vx^{(k+1)})+ \vx^{(k+1)} -\vx^{(k)} + \alpha_{k} \vg^{(k)} -\beta_k (\vx^{(k)}-\vx^{(k-1)})  \big\rangle\ge 0, \textup{ for all } \vy\in X. \]
	Letting $\vy=\vx^{(k)}$ and rearranging terms in the above inequality, we have  
	\begin{equation}\label{eq:cross-term-opt} \|\vx^{(k+1)} -\vx^{(k)}\|^2\le \big\langle \vx^{(k)}-\vx^{(k+1)},  \alpha_{k} (\tilde\nabla r_2(\vx^{(k+1)})+\vg^{(k)} ) -\beta_k (\vx^{(k)}-\vx^{(k-1)})  \big\rangle ,
	\end{equation}
	which together with the Cauchy-Schwarz inequality gives
	\[ \|\vx^{(k+1)} -\vx^{(k)}\| \le \|  \alpha_{k} (\tilde\nabla r_2(\vx^{(k+1)})+\vg^{(k)} ) -\beta_k (\vx^{(k)}-\vx^{(k-1)})\|.\]
	Hence, by the triangle inequality and the Young's inequality, we have for any $c>0$, 
	\begin{align*}
		\|\vx^{(k+1)} -\vx^{(k)} \|^2  
		&\le (  \alpha_{k}\| \tilde\nabla r_2(\vx^{(k+1)})+\vg^{(k)} \|+\beta_k \|\vx^{(k)}-\vx^{(k-1)}\|)^2 \nonumber\\
		& \le \alpha_{k}^2({\textstyle 1+\frac{1}{c}})\| \tilde\nabla r_2(\vx^{(k+1)})+\vg^{(k)} \|^2+\beta_k^2(1+c) \|\vx^{(k)}-\vx^{(k-1)}\|^2  \nonumber\\
		&\le 2\alpha_{k}^2({\textstyle 1+\frac{1}{c}})( \| \tilde\nabla r_2(\vx^{(k+1)})\|^2+\|\vg^{(k)} \|^2) +\beta_k^2(1+c) \|\vx^{(k)}-\vx^{(k-1)}\|^2.		
	\end{align*}
	Take full expectation on both sides of the above inequality and use Assumption~\ref{assump:subgrad-bound} and condition~\ref{subgrad-bnd2} of Assumption~\ref{assump:subgrad-bound2} to obtain 
	\begin{equation*} 
		\EE\|\vx^{(k+1)} -\vx^{(k)} \|^2 
		\le 2\alpha_{k}^2({\textstyle 1+\frac{1}{c}})\left( M_r  ^2+M ^2\right) +\beta_k^2(1+c) \EE\|\vx^{(k)}-\vx^{(k-1)}\|^2.
	\end{equation*}
	Let $c= \frac{1}{2}(1/\tilde\beta^2 - 1)$ and sum up the above inequality over $k=1$ to $K$. We obtain \eqref{eq:ex-grad-bound2} 
	by rearranging terms and using $\vx^{( 0)}=\vx^{( 1)}$. 
\endproof

\noindent\textbf{Proof of Lemma~\ref{lem:nsm-error-term-vary}.} By similar arguments as in the proof to obtain \eqref{eq:sum-F-term}, we have
\begin{align}\label{eq:sum-F-term-vary}
		&~\sum_{k=k_0}^K  \alpha_k \EE \big[F(\vx^{(k)})-F(\vx^{(k-\tau_k)})\big] \cr
		= & ~ \sum_{k=k_0}^K \alpha_k \EE \big[F(\vx^{(k)}) - C_F\big] - \sum_{k=k_0-\tau}^K\sum_{t=\max\{k_0,k\}}^{\min\{K,k+\tau\}} \alpha_t p_{t-k} \EE\big[F(\vx^{(k)})-C_F\big]\cr
		= & ~ -\sum_{k=k_0-\tau}^{k_0-1}\sum_{t=\max\{k_0,k\}}^{\min\{K,k+\tau\}} \alpha_t p_{t-k} \EE\big[F(\vx^{(k)})-C_F\big] + \sum_{k=k_0}^K \left(\alpha_k - \sum_{t=\max\{k_0,k\}}^{\min\{K,k+\tau\}} \alpha_t p_{t-k}\right) \EE\big[F(\vx^{(k)})-C_F\big]\cr
\le & ~ 2\alpha_{k_0} \tau C_F ,
	\end{align}
where the inequality holds by the nonincreasing monotonicity of $\{\alpha_k\}$ and the fact $|F(\vx^{(k)})|\le C_F, \forall\, k$.	

In addition, from the nonincreasing monotonicity of $\{\alpha_k\}$ and $\tau_k\le \tau,\forall\, k$, it holds 
\begin{align*}
\textstyle\sum_{k=k_0}^K \alpha_k \|\vx^{(k-\tau_k)}-\vx^{(k)}\|^2 \le & ~\textstyle\alpha_{k_0} \tau^2 \sum_{k=k_0 - \tau+1}^K \|\vx^{(k-1)}-\vx^{(k)}\|^2 \le \alpha_{k_0} \tau^2 \sum_{k=2}^K \|\vx^{(k-1)}-\vx^{(k)}\|^2.
\end{align*}
Hence, by \eqref{eq:sum-xdiff-sq-vary} and \eqref{eq:ex-grad-bound2}, and the definitions of $C_1$ and $C_2$ in \eqref{eq:def_C12}, we have from the above that
	\begin{equation}\label{eq:sum-xtauk-term-vary}
	\textstyle	\sum_{k=k_0}^K \alpha_k \|\vx^{(k-\tau_k)}-\vx^{(k)}\|^2\le\alpha_{k_0} \tau^2\big(C_1 + C_2\sum_{k=1}^K\alpha_k^2\big)
	\end{equation}
	
Finally, similar to \eqref{eq:sum-error-crs-term}, we have
	\begin{align}\label{eq:sum-error-crs-term-vary}
	\textstyle	\sum_{k=k_0}^K \left[-\alpha_k\big\langle    \vx^{(k)}-\vx^{(k-\tau_k)} ,\vv^{(k)} \big\rangle\right] \le M \tau \sum_{k=k_0}^K \alpha_k \|\vx^{(k)}-\vx^{(k-1)}\|.
	\end{align}	
By the Cauchy-Schwarz inequality and Jensen's inequality, it holds	
$$\sum_{k=k_0}^K \alpha_k \EE\big[\|\vx^{(k)}-\vx^{(k-1)}\|\big] \le \sqrt{\sum_{k=k_0}^K \alpha_k^2} \sqrt{\sum_{k=k_0}^K \EE\big[\|\vx^{(k)}-\vx^{(k-1)}\|^2\big]} \le \sqrt{\sum_{k=k_0}^K \alpha_k^2} \sqrt{C_1 + C_2\sum_{k=1}^K\alpha_k^2},$$
which together with \eqref{eq:sum-error-crs-term-vary} gives
\begin{align}\label{eq:sum-error-crs-term-vary-2}
	\textstyle	\sum_{k=k_0}^K \EE\left[-\alpha_k\big\langle    \vx^{(k)}-\vx^{(k-\tau_k)} ,\vv^{(k)} \big\rangle\right] \le M \tau \sqrt{\sum_{k=k_0}^K \alpha_k^2} \sqrt{C_1 + C_2\sum_{k=1}^K\alpha_k^2}.
	\end{align}
Now \eqref{eq:bound-nsm-error-term-vary} follows from \eqref{eq:sum-F-term-vary}, \eqref{eq:sum-xtauk-term-vary}, and  \eqref{eq:sum-error-crs-term-vary-2}, and also $1-\alpha_k\overline\rho \le 1$ and $\sum_{k=1}^K\alpha_k^2\le \alpha^2(1+\ln K)$.
\endproof

\noindent\textbf{Proof of Lemma~\ref{lem:keybound}.}
	As in the proof of Lemma~\ref{lem:keybound2}, we denote $\delta=1-\alpha_{k}\overline{\rho}$ and take conditional expectation about $\xi_k$ over both sides of \eqref{eq:fix1} to have
	\begin{align}
		\EE_{\xi_k}  \|\vx^{(k+1)}-\widetilde\vx^{(k)} \|^2 
		\le	& ~\EE_{\xi_k} \big\|  \delta ( \vx^{(k)}-\widetilde\vx^{(k)}   )- \alpha_{k}(\vg^{(k)} -\widetilde\vv^{(k)})  +\beta_k (\vx^{(k)}-\vx^{(k-1)})  \big\|^2  \nonumber\\ 
		=& ~\EE_{\xi_k} \big\|  \delta ( \vx^{(k)}-\widetilde\vx^{(k)}   )- \alpha_{k}(\nabla F(\vx^{(k)}) -\nabla F(\widetilde\vx^{(k)}))  +\beta_k (\vx^{(k)}-\vx^{(k-1)}) -\alpha_{k} \vw^{(k)} \big\|^2  \nonumber \\
		=& ~   \big\|  \delta ( \vx^{(k)}-\widetilde\vx^{(k)}   )- \alpha_{k}(\nabla F(\vx^{(k)}) -\nabla F(\widetilde\vx^{(k)}))  +\beta_k (\vx^{(k)}-\vx^{(k-1)})\big\|^2 + \alpha_{k}^2\EE_{\xi_k} \|\vw^{(k)}\|^2   \nonumber \\
		&~-2\alpha_{k} \EE_{\xi_k} \big\langle   \delta ( \vx^{(k)}-\widetilde\vx^{(k)}   )- \alpha_{k}(\nabla F(\vx^{(k)}) -\nabla F(\widetilde\vx^{(k)}))  +\beta_k (\vx^{(k)}-\vx^{(k-1)}), \vw^{(k)}  \big\rangle   \nonumber \\
		\le& ~   \big(  (\delta +\alpha_{k}\rho)\| \vx^{(k)}-\widetilde\vx^{(k)}  \|+  \beta_k \|\vx^{(k)}-\vx^{(k-1)}\|\big)^2 + \alpha_{k}^2 \big(\sigma^2+\rho^2\|\vx^{(k-\tau_k)} - \vx^{(k)}\|^2\big) +\cE , \label{eq:caliE}
	\end{align}
	where we have used Assumption~\ref{smoothness} and Lemma~\ref{lem:g-deviate} to obtain the last inequality, and we denote  $\vw^{(k)}=\vg^{(k)} - \nabla F(\vx^{(k)})$ and
	\[\cE:=-2\alpha_{k}   \big\langle   \delta ( \vx^{(k)}-\widetilde\vx^{(k)}   )- \alpha_{k}(\nabla F(\vx^{(k)}) -\nabla F(\widetilde\vx^{(k)}))  +\beta_k (\vx^{(k)}-\vx^{(k-1)}), \nabla F(\vx^{(k-\tau_k)}) -\nabla F(\vx^{(k)}) \big\rangle.\] 
	Now we apply the Young's inequality to bound the first square term in \eqref{eq:caliE} to obtain 
	\begin{equation}\label{eq:caliE2-0} 
	\begin{aligned}
			\EE_{\xi_k}  \|\vx^{(k+1)}-\widetilde\vx^{(k)}  \|^2 
			\le &~ (1+c_k)  (\delta +\alpha_{k}\rho)^2\| \vx^{(k)}-\widetilde\vx^{(k)}  \|^2 \\
			& ~+ ({\textstyle 1+\frac{1}{c_k} }) \beta_k^2 \|\vx^{(k)}-\vx^{(k-1)}\|^2 
			+   \alpha_{k}^2\sigma^2+\alpha_{k}^2\rho^2\|\vx^{(k-\tau_k)} - \vx^{(k)}\|^2 + \cE,  
	\end{aligned}		 
	\end{equation}		
	where $c_k$ is any positive number. Recall $\delta=1-\alpha_{k}\overline{\rho}$, and thus  
	$$
		(1+c_k)  (\delta +\alpha_{k}\rho)^2=(1+c_k)  \left( 1- \alpha_{k}(\overline{\rho}-\rho)(2-\alpha_{k}(\overline{\rho}-\rho)) \right) 
		\le(1+c_k)  \left( 1- \alpha_{k}(\overline{\rho}-\rho)\right)  
		\le1+c_k- \alpha_{k}(\overline{\rho}-\rho), 
$$
	where the two inequalities follow from $0<\alpha_{k}(\overline{\rho}-\rho)<1$ and $c_k>0$. Hence, \eqref{eq:caliE2-0} implies
	\begin{equation} \label{eq:caliE2}	
	\begin{aligned}
	\textstyle		\EE_{\xi_k}  \|\vx^{(k+1)}-\widetilde\vx^{(k)}  \|^2 
	\le  & \textstyle ~\left(1+c_k- \alpha_{k}(\overline{\rho}-\rho)\right)   \| \vx^{(k)}-\widetilde\vx^{(k)}  \|^2 \\
	& \textstyle ~+(  1+\frac{1}{c_k}  ) \beta_k^2 \|\vx^{(k)}-\vx^{(k-1)}\|^2 
			+   \alpha_{k}^2\sigma^2   +\alpha_{k}^2\rho^2\|\vx^{(k-\tau_k)} - \vx^{(k)}\|^2 +\cE.
			\end{aligned}
	\end{equation}	
	
	Below we bound $\cE$. We have by the triangle inequality and the $\rho$-smoothness of $F$ that
	\begin{align}
		\cE\le&~ 2\alpha_{k}\rho       \big(  (\delta +\alpha_{k}\rho)\| \vx^{(k)}-\widetilde\vx^{(k)}  \|+  \beta_k \|\vx^{(k)}-\vx^{(k-1)}\|\big) \|\vx^{(k-\tau_k)} - \vx^{(k)}\| \nonumber\\
		\le&~  \textstyle \frac{1}{2}\alpha_{k}(\overline{\rho}-\rho)  \| \vx^{(k)}-\widetilde\vx^{(k)}  \|^2 + \frac{2\alpha_{k}\rho^2}{\overline{\rho}-\rho}\|\vx^{(k-\tau_k)} - \vx^{(k)}\|^2   \label{eq:boundcaliE} \\
		& ~+  \beta_k^2 \|\vx^{(k)}-\vx^{(k-1)}\|^2+  \alpha_{k}^2\rho^2 \|\vx^{(k-\tau_k)} - \vx^{(k)}\|^2, \nonumber
	\end{align} 
	where we have used $\delta +\alpha_{k}\rho=1-\alpha_{k}(\overline{\rho}-\rho)<1$ and the Young's inequality twice to obtain the second inequality. Plug \eqref{eq:boundcaliE}  into \eqref{eq:caliE2} and rearrange terms. We obtain  \eqref{eq:keybound} and complete the proof.
\endproof

\begin{lemma}
	\label{lem:vm-2mom}
	Let $\{\vx^{(k)}\}_{k\ge 1} $ and $\{\vg^{(k)}\}_{k\ge 1} $ be generated from Algorithm~\ref{alg:async-hvb-sgm}, and let $\{q_k\}_{k\ge 1} $ be a sequence of constants. Under Assumptions~\ref{assump:unbiased} and \ref{assump:variance}, we have 
	\begin{equation}\label{g-2mom}
		\textstyle	\EE \big\|\sum_{j=1}^k q_j\vg^{(j)}\big\|^2\le  \sum_{l=1}^k q_l\sum_{j=1}^k q_j  u_j  
		+ \sum_{j=1}^k q_j^{2}   u_j  + \sigma^2\sum_{j=1}^k q_j^{2}.
	\end{equation}
\end{lemma}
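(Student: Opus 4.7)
\textbf{Proof proposal for Lemma~\ref{lem:vm-2mom}.}

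The plan is to split each sample (sub)gradient into its conditional mean and a zero-mean noise term, then expand the squared norm. Set $\vu^{(j)}=\EE_j[\vg^{(j)}]=\nabla F(\vx^{(j-\tau_j)})$ (Assumption~\ref{assump:unbiased}) and $\ve^{(j)}:=\vg^{(j)}-\vu^{(j)}$; then $\EE_j[\ve^{(j)}]=\vzero$ and $\EE_j\|\ve^{(j)}\|^2\le\sigma^2$ by Assumption~\ref{assump:variance}. Since $\vx^{(j-\tau_j)}$ is built only from $\xi_1,\ldots,\xi_{j-\tau_j-1}$, $\vu^{(j)}$ is measurable with respect to $\{\xi_l\}_{l=1}^{j-1}$, so $\{\ve^{(j)}\}$ is a martingale-difference sequence for the natural filtration induced by the samples.

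Substituting $\vg^{(j)}=\vu^{(j)}+\ve^{(j)}$ and expanding yields
\[\textstyle\EE\bigl\|\sum_j q_j\vg^{(j)}\bigr\|^2=\EE\bigl\|\sum_j q_j\vu^{(j)}\bigr\|^2+2\EE\bigl\langle\sum_j q_j\vu^{(j)},\sum_j q_j\ve^{(j)}\bigr\rangle+\EE\bigl\|\sum_j q_j\ve^{(j)}\bigr\|^2,\]
and I would bound the three pieces separately. For the mean-squared piece, the weights $q_j$ are non-negative in every intended invocation of this lemma (see Lemma~\ref{lem:hatgap-dist}, where $q_j=(1-\beta)\beta^{k-j}$ or $q_j=\theta_{k,j}$). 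Applying Cauchy--Schwarz to $\sum_j q_j\|\vu^{(j)}\|=\sum_j\sqrt{q_j}\cdot\sqrt{q_j}\,\|\vu^{(j)}\|$ gives $\|\sum_j q_j\vu^{(j)}\|^2\le(\sum_l q_l)(\sum_j q_j\|\vu^{(j)}\|^2)$, and taking expectation delivers the first term $(\sum_l q_l)(\sum_j q_j u_j)$ of \eqref{g-2mom}. For the noise-squared piece the martingale property kills all off-diagonal contributions: for $i<j$, $\EE\langle\ve^{(i)},\ve^{(j)}\rangle=\EE\langle\ve^{(i)},\EE_j\ve^{(j)}\rangle=0$ since $\ve^{(i)}$ is $\{\xi_l\}_{l=1}^{i}\subseteq\{\xi_l\}_{l=1}^{j-1}$-measurable, leaving only the diagonal $\sum_j q_j^2\EE\|\ve^{(j)}\|^2\le\sigma^2\sum_j q_j^2$, which is the last term of \eqref{g-2mom}.

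The hard part is the cross term $2\sum_{i,j}q_iq_j\EE\langle\vu^{(i)},\ve^{(j)}\rangle$. For $i\le j$ the same tower argument makes each summand vanish, since $\vu^{(i)}$ is $\{\xi_l\}_{l=1}^{j-1}$-measurable and $\EE_j\ve^{(j)}=\vzero$. The remaining indices $i>j$ are delicate precisely because of the delay: $\vx^{(i-\tau_i)}$ can depend on $\xi_j$ whenever $i-\tau_i-1\ge j$, in which case $\vu^{(i)}$ and $\ve^{(j)}$ are correlated through $\xi_j$ and the inner product need not be zero. I plan to control these residual terms by a pointwise Cauchy--Schwarz estimate $|\EE\langle\vu^{(i)},\ve^{(j)}\rangle|\le\sqrt{u_i}\,\sigma$ and a balanced Young-type splitting $2q_iq_j\sqrt{u_i}\,\sigma\le q_i^2 u_i+q_j^2\sigma^2$; after summation over the (bounded, by Assumption~\ref{assump:bound-tau}) set of problematic pairs, the contribution is absorbed into the middle slack term $\sum_j q_j^2u_j$ of \eqref{g-2mom} (with the leftover $\sigma^2$-piece merging into the last term). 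Combining the three estimates produces \eqref{g-2mom}, and the tight synchronous case corresponds to dropping the slack $\sum_j q_j^2u_j$.
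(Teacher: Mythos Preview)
Your handling of the cross term has a genuine gap, and the difficulty is not where you locate it. For $i>j$, the summand $\EE\langle\vu^{(i)},\ve^{(j)}\rangle$ \emph{vanishes} precisely when $\vu^{(i)}=\nabla F(\vx^{(i-\tau_i)})$ is $\{\xi_l\}_{l\le j-1}$-measurable, i.e., when $i-\tau_i\le j$; since $\tau_i\le\tau$, this can happen only for the at most $\tau$ nearby indices $j<i\le j+\tau_i$. The \emph{problematic} pairs are all the remaining ones with $i-\tau_i>j$, and for each fixed $j$ there are $\Theta(k-j)$ of them, not $O(\tau)$. (In particular, in the no-delay case $\tau_i\equiv 0$ every pair $i>j$ is problematic, so Assumption~\ref{assump:bound-tau} is not the mechanism that limits this set.) Summing your Young split $2q_iq_j\sqrt{u_i}\,\sigma\le q_i^2u_i+q_j^2\sigma^2$ over this unbounded index set produces on the order of $\sum_i i\,q_i^2u_i+\sigma^2\sum_j(k-j)q_j^2$, which cannot be absorbed into the single copies $\sum_jq_j^2u_j+\sigma^2\sum_jq_j^2$ claimed in \eqref{g-2mom}.

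The paper sidesteps your cross term altogether by centering at the \emph{total} mean rather than the conditional one: it uses the identity $\EE\|X\|^2=\|\EE X\|^2+\EE\|X-\EE X\|^2$ with $X=\sum_jq_j\vg^{(j)}$, so no mixed $\vu$--$\ve$ term ever appears. The squared-mean piece $\|\sum_jq_j\EE\vu^{(j)}\|^2$ is bounded by $(\sum_lq_l)(\sum_jq_ju_j)$ via triangle, Cauchy--Schwarz and Jensen (the same argument you use for $\EE\|\sum_jq_j\vu^{(j)}\|^2$). The variance piece is written as $\EE\|\sum_jq_j(\vg^{(j)}-\EE\vu^{(j)})\|^2$; the paper argues its off-diagonal terms vanish and then bounds each diagonal term by the further split $\EE\|\vg^{(j)}-\vu^{(j)}\|^2+\EE\|\vu^{(j)}-\EE\vu^{(j)}\|^2\le\sigma^2+u_j$. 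Thus the middle slack $\sum_jq_j^2u_j$ in \eqref{g-2mom} comes from the variance of $\vu^{(j)}$ about its own mean, not from patching a nonzero cross term as you attempt.
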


\noindent\textbf{Proof of Lemma~\ref{lem:vm-2mom}.}
	From \eqref{eq:update-m} and Assumption~\ref{assump:unbiased}, we have 
	\begin{equation}\label{eq:ex-m}
		\textstyle  \sum_{j=1}^k q_j\vg^{(j)}
		=   \sum_{j=1}^k q_j  \nabla f(\vx^{(j-\tau_j)}; \xi_j). 
	\end{equation}
	Taking a total expectation results in	
	\begin{equation}\label{eq:ex-m-mean}
		\textstyle\EE\big[ \sum_{j=1}^k q_j\vg^{(j)}\big]  =    \sum_{j=1}^k q_j \EE [\nabla F(\vx^{(j-\tau_j)} ) 
		]=  \sum_{j=1}^k q_j \EE\vu^{(j)},
	\end{equation}
	which further implies that
	\begin{equation}\label{vm-mean2}
		\textstyle\big\|\EE\big[ \sum_{j=1}^k q_j\vg^{(j)}\big]\big\|^2\le     \sum_{l=1}^k q_l\sum_{j=1}^k q_j \EE\|\vu^{(j)}\|^2 =\sum_{l=1}^k q_l\sum_{j=1}^k q_j  u_j.
	\end{equation}
In \eqref{vm-mean2}, the inequality is obtained by using the triangle inequality, Cauchy-Schwarz inequality, and then Jensen's inequality. 
We further bound the variance as follows:
	\begin{align}
		\textstyle\EE\big\|  \sum_{j=1}^k q_j\vg^{(j)} - \EE\big[ \sum_{j=1}^k q_j\vg^{(j)}\big]\big\|^2 
		=&\textstyle\EE\big\|   \sum_{j=1}^k q_j  \big(  \nabla f(\vx^{(j-\tau_j)}; \xi_j)-\EE\vu^{(j)}\big)  
		\big\|^2 \nonumber\\
		=&\textstyle \sum_{j=1}^k q_j^{2} \EE\big\| \nabla f(\vx^{(j-\tau_j)}; \xi_j)-\EE\vu^{(j)}   
		\big\|^2 \nonumber\\
		=&\textstyle  \sum_{j=1}^k q_j^{2} \big(  \EE\| \nabla f(\vx^{(j-\tau_j)}; \xi_j)-\vu^{(j)}\|^2
		+\EE\| \vu^{(j)}-\EE\vu^{(j)}\|^2 \big)   \nonumber\\   
		\le&\textstyle\sum_{j=1}^k q_j^{2} \big(  \sigma^2 
		+u_j \big). \label{vm-var}
	\end{align}
	Here, the second equality is because the expectations are null for all cross terms $\EE(\vg^{(j)}-\EE\vg^{(j)})^\top(\vg^{(j^\prime)}-\EE\vg^{(j^\prime)})$ with $j>j^\prime$, since each $\xi_j$ is independent from $\{\vx^{(j)},\ldots,\vx^{(1)}\}$ and $\xi_{j^\prime}$; 
	the third equality is because of Assumption~\ref{assump:unbiased};
	the inequality 
	is by Assumption~\ref{assump:variance} and 
	that the variance is upper-bounded by the second moment. Combine \eqref{vm-mean2} and \eqref{vm-var} gives \eqref{g-2mom}.
\endproof

\noindent\textbf{Proof of Lemma~\ref{lem:pi-bnd}.}
	By definition \eqref{def:theta}, we obtain the equality in \eqref{eq:pi-sum1}. 
	Then the first inequality in \eqref{eq:pi-sum1} follows from
	\begin{equation*}
		\textstyle	\sum_{j=1}^{k-1} \pi_{k,j}(t)=  
		\sum_{j=k-\tau_k+1}^{k-1}\frac{1-t^{k-j}}{1-t}
		+  \frac{1-t^{\tau_k}}{1-t}\sum_{j=1}^{k-\tau_k}t^{k-\tau_k-j}
		= \frac{ \tau_k (1-t) -t^{k-\tau_k}(1-t^{\tau_k})}{(1-t)^2}\le \frac{ \tau}{ 1-t },		
	\end{equation*}
	and the second inequality follows from 
	\begin{align*}
		\textstyle\sum_{j=1}^{k-1} \pi_{k,j}^2(t)=&  \textstyle\sum_{j=k-\tau_k+1}^{k-1}\frac{1-2t^{k-j}+t^{2(k-j)}}{(1-t)^2}
		+  \frac{(1-t^{\tau_k})^2}{(1-t)^2}\sum_{j=1}^{k-\tau_k}t^{2(k-\tau_k-j)}\nonumber\\
		=&  \textstyle\frac{ \tau_k (1-t^2) -2(1-t^{\tau_k})(1+t)+(1-t^{2\tau_k})+ (1-t^{\tau_k})^2(1- t^{2(k-\tau_k)})}{(1-t)^2(1-t^2)}
		\le\textstyle\frac{ \tau}{ (1-t)^2}.   
	\end{align*}

\noindent\textbf{Proof of Lemma~\ref{lem:hatgap-dist}.}
	From \eqref{eq:update-m} and Assumption~\ref{assump:unbiased}, we have 
	$
	\vm^{(k)} = \sum_{j=1}^k\beta^{k-j} (1-\beta)\vg^{(j)}
	;
	$	
	apply Lemma~\ref{lem:vm-2mom} with the choice of $q_j=\beta^{k-j} (1-\beta)$ for all $j\in[k]$ to obtain \eqref{vm-2mom} from \eqref{g-2mom}.
	Meanwhile, 
	\begin{align*}
		\vx^{(k-\tau_k)}- \vx^{(k)}&\textstyle=-\sum_{l=0}^{\tau_k-1} \vx^{(k-l)}-\vx^{(k-l-1)}= \sum_{l=0}^{\tau_k-1}\frac{\alpha_{k-l-1}}{1-\beta}\vm^{(k-l-1)}\nonumber\\
		&\textstyle=  \sum_{l=0}^{\tau_k-1}\alpha_{k-l-1} \sum_{j=1}^{k-l-1}\beta^{k-l-j-1}  \vg^{(j)} =   \sum_{j=1}^{k-1} \theta_{k,j}  \vg^{(j)} 
	\end{align*}
	by \eqref{eq:update-x} and \eqref{eq:ex-m}. Apply Lemma~\ref{lem:vm-2mom} with the choice of $q_j= \theta_{k,j}$ for all $j\in[k]$. We have \eqref{hatgap-2mom} from \eqref{g-2mom}.
\endproof

\noindent\textbf{Proof of Lemma~\ref{lem:zz}.}
	From \eqref{defzk}, we have that for $k\geq 1$,
	\begin{align*}
		\vz^{(k+1)}-\vz^{(k)}
		&=\textstyle \frac{1}{1-\beta}(\vx^{(k+1)}-\vx^{(k)})-\frac{\beta}{1-\beta}(\vx^{(k)}-\vx^{(k-1)}) \\
		&=\textstyle -\frac{1}{(1-\beta)^2}\alpha_{k} \vm^{(k)} +\frac{\beta}{(1-\beta)^2} \alpha_{k-1} \vm^{(k-1)}\\
		&=\textstyle \frac{-1}{(1-\beta)^2}\alpha_{k} (\beta\vm^{(k-1)}+(1-\beta)\vg^{(k)}) +\frac{\beta}{(1-\beta)^2} \alpha_{k-1} \vm^{(k-1)}  \\
		&=\textstyle\frac{\beta}{(1-\beta)^2}(\alpha_{k-1} -\alpha_{k} )\vm^{(k-1)}-\frac{\alpha_{k}}{1-\beta} \vg^{(k)} \\
		&=\textstyle\frac{\beta}{1-\beta}(1 - \alpha_{k}/  \alpha_{k-1})\frac{\alpha_{k-1}}{1-\beta}\vm^{(k-1)}-\frac{\alpha_{k}}{1-\beta} \vg^{(k)}.
	\end{align*}
	The second equality is by \eqref{eq:update-x}; the third equality is by \eqref{eq:update-m}.
	The above equality together with \eqref{eq:update-x} gives \eqref{zzandxx}, and \eqref{eq:zxlip} trivially holds by the smoothness of $F$ and \eqref{defzk}.
\endproof

The inequalities in the lemma below are easy to show.

\begin{lemma}\label{lem:sum-alpha-bnd}
	Let $a$ be a positive integer. Then 
		$$\textstyle \sum_{k=1}^{K}\frac{1}{\sqrt{a+k-1}} \ge\int_{a}^{a+K} \frac{1}{\sqrt{x}} dx = 2 (\sqrt{a+K}-\sqrt{a})
	,$$
	$$\textstyle \sum_{k=1}^{K}\frac{1}{a+k-1}\le 1+ \int_{a}^{a+K-1} \frac{1}{x}dx = 1+\ln\frac{a+K-1}{a}
	.$$
\end{lemma}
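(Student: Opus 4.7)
The plan is to derive both bounds from the integral comparison test applied to the two monotone decreasing functions $x \mapsto 1/\sqrt{x}$ and $x \mapsto 1/x$ on $(0,\infty)$. In each case, I will compare a single summand with the integral of the corresponding function over a unit interval and then telescope; the direction of the inequality (lower bound in the first case, upper bound in the second) reflects whether I am using the right endpoint or the left endpoint of the comparison interval as the representative value of the function on that interval.

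For the first inequality, I would use that $1/\sqrt{x}$ is decreasing on $[a+k-1, a+k]$ to conclude $\frac{1}{\sqrt{a+k-1}} \ge \frac{1}{\sqrt{x}}$ on that interval, hence $\frac{1}{\sqrt{a+k-1}} \ge \int_{a+k-1}^{a+k}\frac{dx}{\sqrt{x}}$. Summing over $k=1,\ldots,K$ collapses the sub-intervals into $[a, a+K]$ and yields $\sum_{k=1}^K \frac{1}{\sqrt{a+k-1}} \ge \int_a^{a+K}\frac{dx}{\sqrt{x}} = 2(\sqrt{a+K}-\sqrt{a})$.

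For the second inequality, I would peel off the $k=1$ term, bounding $1/a \le 1$ by the assumption that $a$ is a positive integer, and then treat the remaining terms with the reversed comparison: since $1/x$ is decreasing, for $k \ge 2$ and $x \in [a+k-2, a+k-1]$ we have $\frac{1}{a+k-1} \le \frac{1}{x}$, which integrates to $\frac{1}{a+k-1} \le \int_{a+k-2}^{a+k-1}\frac{dx}{x}$. Summing for $k=2,\ldots,K$ telescopes into $\int_a^{a+K-1}\frac{dx}{x} = \ln\frac{a+K-1}{a}$, and adding back the $1$ from the $k=1$ term completes the bound.

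No real obstacle is expected; the only bookkeeping worth watching is the index shift that leaves the two integrals running up to $a+K$ and $a+K-1$, respectively, and the use of $a \ge 1$ to legitimize the estimate $1/a \le 1$ that compensates for the first summand in the harmonic bound.
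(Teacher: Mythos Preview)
Your argument is correct and is exactly the standard integral-comparison derivation; the paper itself does not give a proof beyond stating that ``the inequalities in the lemma below are easy to show.'' Your treatment, including the index bookkeeping and the use of $a\ge 1$ to bound the first harmonic term by $1$, is precisely what is needed.
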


\noindent\textbf{Proof of Corollary~\ref{cor:variant}.}
	With $\alpha_{k}=\alpha/\sqrt{a+k-1}, \forall\, k\ge1$, \eqref{eq:vanish} holds if and only if 
	\begin{equation}\label{eq:equiv-alpha-cond}
	\textstyle	\frac{\alpha }{2(1-\beta)\sqrt{a+k-1}}\ge (1-\sqrt{a+k-2}/\sqrt{a+k-1})^2=\frac{1}{((\sqrt{a+k-2}+\sqrt{a+k-1})\sqrt{a+k-1})^2}. 
	\end{equation}
	Notice $\frac{1}{ (\sqrt{a+1}+\sqrt{a})^2}\le\frac{1}{4a}$, and thus $a\sqrt{a+1} \ge \frac{ 1-\beta }{2\alpha}$ indicates $ \alpha\ge \frac{2(1-\beta)}{\sqrt{a+1}(\sqrt{a+1}+\sqrt{a})^2} $, which further implies the inequality in \eqref{eq:equiv-alpha-cond} for all $k\ge2$. 
	%
	Moreover,	when \eqref{cor:tau-cond-22} holds, it is not difficult to verify that the two inequalities in \eqref{eq:twohalfs} are true, so we have \eqref{eq:main-cond} and thus \eqref{eq:main-ncv} from Theorem~\ref{thm:ncv}. 
	
	Below we simplify the inequality in \eqref{eq:main-ncv} for the setting of $\alpha_k$.	First, 
	\begin{align*} 
		\textstyle\sum_{k=1}^K \alpha_{k} \alpha_{\max\{k-\tau_k,1\}}^2 \le &~ \textstyle \sum_{k=1}^K \frac{\alpha^3}{\sqrt{a+k-1}(a+k-1-\tau)}
		\le  \sum_{k=1}^K \frac{2\alpha^3}{\sqrt{a+k-1}(a+k-1)}  \\ 
		 \le & ~  \textstyle 2\alpha^3\left( \frac{1}{a\sqrt{a}}  + \int_{a}^{a+K-1} \frac{1}{x\sqrt{x}}dx  \right) 
		\le      \frac{2\alpha^3(1+2a)}{ a\sqrt{a}} ;
\end{align*} 
	second, by Lemma~\ref{lem:sum-alpha-bnd},
	$${\textstyle \sum_{k=1}^{K}\alpha_{k}= \sum_{k=1}^{K}\frac{\alpha}{\sqrt{a+k-1}} \ge 2\alpha (\sqrt{a+K}-\sqrt{a}),	
	\text{ and }
	\sum_{k=1}^{K}\alpha_k^2 = \sum_{k=1}^{K}\frac{\alpha^2}{a+k-1}\le \alpha^2 (1+\ln\frac{a+K-1}{a}).}
	$$  
	Substituting the above three inequalities into \eqref{eq:main-ncv} gives \eqref{cor:converge2}.
\endproof


\section{Proof of Theorem~\ref{thm:rate-prox-delay-vary}}\label{varying2}
The key of the proof is to bound $\sum_k\EE[\|\vx^{(k)} - \vx^{(k+1)}\|^2]$ while using Theorem~\ref{thm:prox-async}.
First, similar to \eqref{eq:cross-term-opt}, we have
\begin{equation}\label{eq:cross-term-opt-comp} \|\vx^{(k+1)} -\vx^{(k)}\|^2\le \big\langle \vx^{(k)}-\vx^{(k+1)},  \alpha_{k} (\tilde\nabla r(\vx^{(k+1)})+\vg^{(k)} ) -\beta_k (\vx^{(k)}-\vx^{(k-1)})  \big\rangle ,
\end{equation}
where $\tilde\nabla r(\vx^{(k+1)})$ is a subgradient of $r$ at $\vx^{(k+1)}$. By the convexity of $r$, it holds
\begin{equation}\label{eq:cross-term-opt-comp-r}
\big\langle \vx^{(k)}-\vx^{(k+1)}, \tilde\nabla r(\vx^{(k+1)})\big\rangle \le r(\vx^{(k)}) - r(\vx^{(k+1)}).
\end{equation}
In addition, from the $\rho$-smoothness of $F$ and the Young's inequality, we have
\begin{align}\label{eq:cross-term-opt-comp-g}
\big\langle \vx^{(k)}-\vx^{(k+1)}, \vg^{(k)} \big\rangle = & ~\big\langle \vx^{(k)}-\vx^{(k+1)}, \nabla F(\vx^{(k)}) +  \vg^{(k)} - \nabla F(\vx^{(k)})\big\rangle\nonumber\\
& ~ \hspace{-1.5cm}\le  \textstyle F(\vx^{(k)}) - F(\vx^{(k+1)}) + \frac{\rho}{2}\|\vx^{(k)}-\vx^{(k+1)}\|^2 + \frac{1}{4\alpha_k}\|\vx^{(k)}-\vx^{(k+1)}\|^2 + \alpha_k \|\vg^{(k)} - \nabla F(\vx^{(k)})\|^2,
\end{align}
and
\begin{equation}\label{eq:cross-term-opt-comp-beta}
\textstyle \big\langle \vx^{(k)}-\vx^{(k+1)}, -\beta_k (\vx^{(k)}-\vx^{(k-1)})  \big\rangle\le  \frac{1}{4}\|\vx^{(k)}-\vx^{(k+1)}\|^2 + \beta_k^2 \|\vx^{(k)}-\vx^{(k-1)}\|^2
\end{equation}
Plugging \eqref{eq:cross-term-opt-comp-r}, \eqref{eq:cross-term-opt-comp-g} and \eqref{eq:cross-term-opt-comp-beta} into \eqref{eq:cross-term-opt-comp} and rearranging terms  yield
\begin{equation}\label{eq:cross-term-opt-comp-rg}
\textstyle \frac{1}{2}(1-\alpha_k \rho) \|\vx^{(k+1)} -\vx^{(k)}\|^2\le \alpha_k\big(\phi(\vx^{(k)}) - \phi(\vx^{(k+1)})\big) + \alpha_k^2 \|\vg^{(k)} - \nabla F(\vx^{(k)})\|^2 + \beta_k^2 \|\vx^{(k)}-\vx^{(k-1)}\|^2.
\end{equation}
Moreover, by Assumptions~\ref{assump:variance} and \ref{assump:bound-tau} and the $\rho$-smoothness of $F$, we have
\begin{align}\label{eq:variance-term-comp}
\EE\big[\|\vg^{(k)} - \nabla F(\vx^{(k)})\|^2\big] \le & ~2\EE\big[\|\vg^{(k)} - \nabla F(\vx^{(k-\tau_k)})\|^2\big] + 2\EE\big[\|\nabla F(\vx^{(k-\tau_k)})- \nabla F(\vx^{(k)})\|^2\big]\cr
&\hspace{-1cm}\le  \textstyle 2\sigma^2 + 2\rho^2\EE\big[\|\vx^{(k-\tau_k)}- \vx^{(k)}\|^2\big] \le 2\sigma^2 + 2\tau\rho^2\sum_{j=1}^\tau\EE\big[\|\vx^{(k-j)}- \vx^{(k-j+1)}\|^2\big].
\end{align}

Now taking full expectation on \eqref{eq:cross-term-opt-comp-rg}, substituting \eqref{eq:variance-term-comp} there, and summing over $k=1$ to $K$, we obtain by rearranging terms that
\begin{equation}\label{eq:cross-term-opt-comp-sum}
\begin{aligned}
& ~\textstyle \sum_{k=1}^K\frac{1}{2}(1-\alpha_k \rho - \beta_{k+1}^2) \EE\big[ \|\vx^{(k+1)} -\vx^{(k)}\|^2\big] \\
\le & ~ \textstyle \sum_{k=1}^K \alpha_k\EE\big(\phi(\vx^{(k)}) - \phi(\vx^{(k+1)})\big) + 2\sigma^2 \sum_{k=1}^K \alpha_k^2 + 2\tau\rho^2 \sum_{k=1}^K \alpha_k^2 \sum_{j=1}^\tau\EE\big[\|\vx^{(k-j)}- \vx^{(k-j+1)}\|^2\big],
\end{aligned}
\end{equation}
where we have used $\vx^{(0)}=\vx^{(1)}$. Since $\alpha_k$ is nonincreasing, we have
\begin{align*}
\textstyle \sum_{k=1}^K \alpha_k^2 \sum_{j=1}^\tau\EE\big[\|\vx^{(k-j)}- \vx^{(k-j+1)}\|^2\big] \le \tau  \sum_{k=1}^K \alpha_k^2 \EE\big[ \|\vx^{(k)} -\vx^{(k-1)}\|^2\big],
\end{align*}
which substituted into \eqref{eq:cross-term-opt-comp-sum} and together with \eqref{eq:bd-sum-phi} gives
\begin{equation}\label{eq:cross-term-opt-comp-sum-2}
 \textstyle \sum_{k=1}^K\frac{1}{2}(1-\alpha_k \rho - \beta_{k+1}^2 - 2\tau^2\rho^2\alpha_{k+1}^2) \EE\big[ \|\vx^{(k+1)} -\vx^{(k)}\|^2\big] 
\le \textstyle 2 \alpha_1C_\phi + 2\sigma^2 \sum_{k=1}^K \alpha_k^2.
\end{equation}
By the choice of parameters and the definition of $\tilde\gamma$ in \eqref{eq:require-para-comp}, we have from \eqref{eq:cross-term-opt-comp-sum-2} and Lemma~\ref{lem:sum-alpha-bnd} that
\begin{equation}\label{eq:cross-term-opt-comp-sum-3}
 \textstyle \sum_{k=1}^K \EE\big[ \|\vx^{(k+1)} -\vx^{(k)}\|^2\big] 
\le \textstyle \frac{2}{\tilde\gamma}\left( \alpha_1C_\phi + \sigma^2 \alpha^2 (1+\ln\frac{a+K-1}{a}) \right).
\end{equation}
Notice $\big( 2+\frac{4}{\alpha_{k}(\overline{\rho}-\rho)} \big) \beta_k^2\le 2\tilde\beta^2 + \frac{4\beta^2}{\alpha(\overline{\rho}-\rho)}$ and $\alpha_{k}^2+\frac{ \alpha_{k} }{\overline{\rho}-\rho}\le \frac{\alpha^2}{a}+ \frac{ \alpha }{\sqrt{a}(\overline{\rho}-\rho)}, \forall\, k\ge 1$. Therefore, 
\begin{align}\label{eq:bd-x-diff0-prox-sum3-vary}
&~\textstyle\overline{\rho}\sum_{k=1}^{K}\big(  2+\frac{4}{\alpha_{k}(\overline{\rho}-\rho)}  \big) \beta_k^2 \EE\|\vx^{(k)}-\vx^{(k-1)}\|^2 +\frac{\overline{\rho}\rho^2}{2}\sum_{k=1}^{K}\big(   \alpha_{k}^2+\frac{ \alpha_{k} }{\overline{\rho}-\rho} \big) \EE\|\vx^{(k-\tau_k)} - \vx^{(k)}\|^2\cr
\le &~\textstyle \Big( \overline{\rho}\big( 2\tilde\beta^2 + \frac{4\beta^2}{\alpha(\overline{\rho}-\rho)} \big) + \frac{\tau^2\overline{\rho}\rho^2}{2} \big(\frac{\alpha^2}{a}+ \frac{ \alpha }{\sqrt{a}(\overline{\rho}-\rho)}\big)  \Big) \sum_{k=1}^K\EE\|\vx^{(k)}-\vx^{(k-1)}\|^2\cr
\le &~\textstyle \Big( \overline{\rho}\big( 2\tilde\beta^2 + \frac{4\beta^2}{\alpha(\overline{\rho}-\rho)} \big) + \frac{\tau^2\overline{\rho}\rho^2}{2} \big(\frac{\alpha^2}{a}+ \frac{ \alpha }{\sqrt{a}(\overline{\rho}-\rho)}\big)  \Big) \frac{2}{\tilde\gamma}\left( \alpha_1C_\phi + \sigma^2 \alpha^2  (1+\ln\frac{a+K-1}{a}) \right).
\end{align}
Now plug \eqref{eq:bd-x-diff0-prox-sum3-vary} and the choice of $\{\alpha_k\}$ into \eqref{eq:prox-async} to obtain the desired result.
\endproof

\bibliographystyle{abbrv}
\bibliography{optim}

\end{document}